\newcommand{\diam}{\mathord{\diamond}}
\newtheorem{theorem}{Theorem}[section]
\newtheorem{corollary}[theorem]{Corollary}
\newtheorem{lemma}[theorem]{Lemma}
\newtheorem{prop}[theorem]{Proposition}
\newtheorem{conjecture}[theorem]{Conjecture}
\newtheorem{observation}[theorem]{Observation}
\newtheorem{question}[theorem]{Question}
\theoremstyle{definition}
\newtheorem{definition}[theorem]{Definition}
\newtheorem{example}[theorem]{Example}
\newtheorem{remark}[theorem]{Remark}
\newcommand\N{\mathbb{N}}
\DeclarePairedDelimiter\set{\{}{\}}
\DeclarePairedDelimiter\abs{|}{|}
\newcommand{\red}{\text{red}}
\newcommand{\oiso}{~{\cong}_{\prec}~}
\newcommand{\iso}{\cong}
\title{Graph Universal Cycles: Compression and Connections to Universal Cycles}
\author{Rachel Kirsch\thanks{Department of Mathematical Sciences, George Mason University, Fairfax, VA.}
	\and 
	Clare Sibley\thanks{Houston, TX.}
	\and
	Elizabeth Sprangel\thanks{Department of Mathematics, Iowa State University, Ames, IA.}}
\begin{document}
	\maketitle
	\begin{abstract} Universal cycles, such as De Bruijn cycles, are cyclic sequences of symbols that represent every combinatorial object from some family exactly once as a consecutive subsequence. Graph universal cycles are a graph analogue of universal cycles introduced in 2010. We introduce graph universal partial cycles, a more compact representation of graph classes, which use ``do not know" edges. We show how to construct graph universal partial cycles for labeled graphs, threshold graphs, and permutation graphs. For threshold graphs and permutation graphs, we demonstrate that the graph universal cycles and graph universal partial cycles are closely related to universal cycles and compressed universal cycles, respectively. Using the same connection, for permutation graphs, we define and prove the existence of an $s$-overlap form of graph universal cycles. We also prove the existence of a generalized form of graph universal cycles for unlabeled graphs.
	\end{abstract}
	
	\section{Introduction}
	
	\subsection{Universal cycles}
	
	A \emph{universal cycle} for a family of combinatorial objects $\mathcal{F}$ is a cyclic sequence of symbols whose consecutive substrings of a fixed length represent each object in $\mathcal{F}$ exactly once. A canonical example is the \emph{De Bruijn cycle}: a universal cycle for the words of length $n$ over an alphabet $\mathcal{A}$. For example, $00010111$ is a De Bruijn cycle for words of length $3$ over the alphabet $\set{0,1}$ because its consecutive substrings of length $3$ are $000$, $001$, $010$, $101$, $011$, $111$, $110$, and $100$. De Bruijn cycles have proven to be relevant in a variety of settings such as robotics, computer science, game theory, bioinformatics, and materials science \cite{S01,BBS83,W07,AMM11,MMF21}.
	
	Universal cycles for other families of combinatorial objects---such as sets, permutations, and set partitions---were studied in \cite{CDG92}. In contrast to De Bruijn cycles, these universal cycles require a nontrivial encoding of the combinatorial objects into words.
	
	\subsection{Graph universal cycles}
	In 2010, Brockman, Kay, and Snively introduced graph universal cycles as an extension of universal cycles to list families of graphs and hypergraphs compactly \cite{BKS10}. Graph universal cycles are cyclically ordered graphs that contain each graph from some class as an induced subgraph on consecutive vertices exactly once. Although we can construct universal cycles for labeled graphs by encoding each graph as a binary word of length $\binom{n}{2}$---each bit indicating whether a pair of vertices induces an edge or not---graph universal cycles seem to be a more natural fit for representing graphs since no encoding of graphs into words is needed.
	
	Recently, in \cite{CGGP21}, Cantwell, Geraci, Godbole, and Padilla found graph universal cycles for sets, multisets, permutations, permutation involutions, and set partitions by encoding them as graphs. In Section \ref{sec:perms}, by defining graph universal cycles for permutations slightly differently, we uncover their deep connection to universal cycles for permutations.
	
	\begin{figure}[h!]
		\centering
		\tikzstyle{every node}=[circle, draw, fill=black, inner sep=0pt, minimum width=4pt]
		\begin{tikzpicture}[scale=0.4]
			\node[label={[shift={(0,-0.4)}]\scriptsize{1}}] (1) at (0,0) {};
			\node[label={[shift={(0,-0.4)}]\scriptsize{2}}] (2) at (2,0) {};
			\node[label={[shift={(0,-0.4)}]\scriptsize{3}}] (3) at (4,0) {};
			\node[label={[shift={(0,-0.4)}]\scriptsize{4}}] (4) at (6,0) {};
			\node[label={[shift={(0,-0.4)}]\scriptsize{5}}] (5) at (8,0) {};
			\node[label={[shift={(0,-0.4)}]\scriptsize{6}}] (6) at (10,0) {};  
			\node[label={[shift={(0,-0.4)}]\scriptsize{7}}] (7) at (12,0) {};  
			\node[label={[shift={(0,-0.4)}]\scriptsize{8}}] (8) at (14,0) {};
			
			\draw
			(3) to (4)
			(4) to (5)
			(5) to (6)
			(7) to (8)
			(1) to[bend left=55] (3)
			(2) to[bend left=55] (4)
			(4) to[bend left=55] (6)
			(5) to[bend left=55] (7);
			
		\end{tikzpicture}
		\caption{A graph universal cycle for the labeled graphs on vertex set $[3]$.}
		\label{fig:gucycle}
	\end{figure}
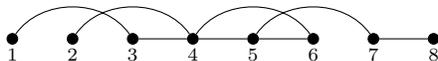
	
	\subsection{Compression of universal cycles}
	
	A significant reason for the widespread interest in and applicability of universal cycles is the efficiency gained from listing the combinatorial objects in a compact way. In the last few years, there has been a great deal of interest in shortening universal cycles even further. A key innovation is the use of a ``do not know'' symbol $\diamond$. In combinatorics on words, a partial word over an alphabet $\mathcal{A}$ is a sequence of symbols from $\mathcal{A}\cup\set{\diamond}$, where the alphabet is a set of symbols excluding the reserved character $\diam$; typically, $\mathcal{A} = \set{0,1,\ldots, \abs{\mathcal{A}}-1}$. We say that a partial word $v$ \emph{covers} a partial word $u$ if and only if $u$ appears consecutively in $v$ after possibly replacing some $\diam$'s in $v$ with letters. For example, $\diam1\diam$ covers $01\diam$, $11\diam$, $\diam10$, $\diam11$, $010$, $011$, $110$, and $111$, and so  $1\diam1\diam0$ does too. Where the ``do not know'' symbol $\diamond$ is used, more than one word is covered in the same position, which further shortens universal cycles.
	
	The idea of using the $\diam$ symbol in a version of universal cycles first appeared in \cite{BSSW10} in 2010, where the central objects of study, called De Bruijn partial words, cover each word at least once. Universal partial words and universal partial cycles are partial words and cyclic partial words, respectively, which cover each word exactly once. They were introduced in \cite{CKMS17} in 2017 and further studied in \cite{G18,FGKMM21}.
	
	Similarly, in \cite{KPV19,KLSS22}, universal cycles for permutations were shortened using incomparable elements at distance $n-1$ and using ``do not know'' symbols.
	
	We introduce graph universal partial cycles, using ``do not know'' symbols to shorten graph universal cycles.
	
	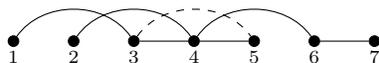
\begin{figure}[h!]
		\centering
		\tikzstyle{every node}=[circle, draw, fill=black, inner sep=0pt, minimum width=4pt]
		\begin{tikzpicture}[scale=0.4]
			\node[label={[shift={(0,-0.4)}]\scriptsize{1}}] (1) at (0,0) {};
			\node[label={[shift={(0,-0.4)}]\scriptsize{2}}] (2) at (2,0) {};
			\node[label={[shift={(0,-0.4)}]\scriptsize{3}}] (3) at (4,0) {};
			\node[label={[shift={(0,-0.4)}]\scriptsize{4}}] (4) at (6,0) {};
			\node[label={[shift={(0,-0.4)}]\scriptsize{5}}] (5) at (8,0) {};
			\node[label={[shift={(0,-0.4)}]\scriptsize{6}}] (6) at (10,0) {};  
			\node[label={[shift={(0,-0.4)}]\scriptsize{7}}] (7) at (12,0) {};  
			
			\draw
			(3) to (4)
			(4) to (5)
			(6) to (7)
			(1) to[bend left=55] (3)
			(2) to[bend left=55] (4)
			(4) to[bend left=55] (6);
			
			\draw[dashed]
			(3) to[bend left=55] (5);
			
		\end{tikzpicture}
		\caption{A graph universal partial cycle for the labeled graphs on vertex set $[3]$, where vertices $3,4$ and $5$ cover both the triangle and an ordered path.}
		\label{fig:gucycle}
	\end{figure}
	
	In Section \ref{sec:prelim} we formally establish preliminary definitions and results to be used throughout the paper. Section \ref{sec:labeled} constructs graph universal partial cycles for labeled graphs. In Section \ref{sec:unlabeled} we revise and prove a weaker form of a conjecture from \cite{BKS10} on graph universal cycles for unlabeled graphs, and show an example of a graph universal partial word for unlabeled graphs. Section \ref{sec:threshold} proves the existence of graph universal cycles for threshold graphs by way of a connection to binary De Bruijn cycles and extends this connection to compressed forms of each. In Section \ref{sec:perms} we show a connection between graph universal cycles for permutation graphs and universal cycles for permutations; extend the connection to compressed forms of each and $s$-overlap forms of each; and show examples of other types of graph universal partial cycles for permutation graphs and two new ways to shorten universal cycles for permutations. Finally, in Section \ref{sec:open} we summarize related open problems.
	
	\section{Preliminaries}\label{sec:prelim}
	
	In this paper, graphs are unlabeled unless otherwise specified. 
	
	\subsection{Ordered Graphs and Gucycles}
	\begin{definition}
		An \emph{ordered graph} is a graph whose vertices are totally ordered.
	\end{definition}
	
	Typically the total order on the vertices will be represented by labeling the vertices with $[N] := \set{1, 2, \ldots, N}$, where $N$ is the number of vertices. For an $N$-vertex ordered graph $G$, we write $\red(G)$ for the ordered graph resulting from (re)labeling $G$ so that its vertex set is $[N]$ while preserving the vertex order.
	
	\begin{definition}
		If $G$ and $H$ are $N$-vertex ordered graphs, we say $G$ and $H$ are \emph{isomorphic as ordered graphs} (or G is \emph{isomorphic as an ordered graph} to $H$), denoted by $G \oiso H$, if $\red(G) = \red(H)$, i.e., the unique bijection between $V(G)$ and $V(H)$ that is specified by their total orders is a graph isomorphism.
	\end{definition}
	
	\begin{definition}
		A \emph{cyclically ordered graph} is a graph whose vertices are cyclically ordered.
	\end{definition}
	
	Again we will use $[N]$ as a set of vertex labels. The cyclic order is the standard one: after vertex $x$, the next vertex is $x+1$, unless $x = N$, in which case the next vertex is $1$.
	
	\begin{definition} A \emph{$k$-window} (or \emph{window}) of a (cyclic) word is $k$ consecutive letters of the (cyclic) word. A \emph{$k$-window} (or \emph{window}) of a (cyclically) ordered graph is the induced ordered subgraph on $k$ consecutive vertices of $G$. 
	\end{definition}
	
	For a (cyclically) ordered graph $G$, we will also use the notation $G[i, i+1, \ldots, i+k-1]$ to refer to the $k$-window of $G$ starting at vertex $i$ (and wrapping around the cycle if needed and if $G$ is cyclically ordered).
	
	\begin{definition}
		We say an ordered graph $H$ is \emph{contained} in a (cyclically) ordered graph $G$ if $H$ is isomorphic as an ordered graph to a window of $G$.
	\end{definition}
	
	\begin{definition}\label{def:gucycle}
		A \emph{graph universal cycle} (or \emph{gucycle}) for a family of $n$-vertex graphs $\mathcal{F}$ is a cyclically ordered graph $G$ that contains each graph in $\mathcal{F}$ exactly once. A \emph{graph universal word} (or \emph{guword}) for a family of $n$-vertex graphs $\mathcal{F}$ is an ordered graph $G$ that contains each graph in $\mathcal{F}$ exactly once.
	\end{definition}
	
	Since any edge at (cyclic) distance at least $n$ in a gucycle or guword is irrelevant to the $n$-windows, for simplicity we assume gucycles and guwords have no such edges, unless otherwise specified.
	
	The number of vertices in a gucycle is always $\abs{\mathcal{F}}$, the number of graphs it represents, since there is exactly one graph in each $n$-window, and there is exactly one $n$-window starting at each vertex of the gucycle.
	
	\subsection{Partial Graphs and Gupcycles}\label{subsec:partial}
	
	We make the following definitions in order to create a compressed form of gucycles.
	
	\begin{definition}\label{def:partialgraph}
		A \emph{partial graph} $G$ is a vertex set $V(G)$ together with a partition of $\binom{V(G)}{2}$ into an edge set $E(G)$, a non-edge set $\bar{E}(G)$, and a diamond edge set $E_{\diamond}(G)$. 
	\end{definition}
	
	The diamond edges are ``wildcard'' pairs of vertices, which may be thought of as representing either an edge or a non-edge. The diamond edge set may be empty, so all graphs are partial graphs.
	
	\begin{definition}\label{def:partialgraphcontain}
		We say a graph $H$ is \emph{contained} in a partial graph $G$ if there is a mapping $f: V(H) \to V(G)$ that satisfies the following conditions:
		\begin{enumerate}
			\item If $ij \in E(H)$, then $f(i)f(j) \in E(G) \cup E_{\diamond}(G)$, and
			\item if $ij \notin E(H)$, then $f(i)f(j) \in \bar{E}(G) \cup E_{\diamond}(G)$.
		\end{enumerate}
		
		Similarly, we say an ordered graph $H$ is \emph{contained} in a (cyclically) ordered partial graph $G$ if $H$ is isomorphic as an ordered graph to a window of a (cyclically) ordered graph obtained from $G$ by regarding each of the diamond edges as either an edge or a non-edge, i.e. a (cyclically) ordered graph whose vertex set is $V(G)$ and whose edge set is $E(G)\cup S$, where $\emptyset \subseteq S \subseteq E_{\diamond}(G)$.
	\end{definition}

	\begin{definition}\label{def:gupcycle}
		A \emph{graph universal partial cycle}, or \emph{gupcycle}, for a family of $n$-vertex graphs $\mathcal{F}$ is a cyclically ordered partial graph $G$ that contains each graph in $\mathcal{F}$ exactly once. A \emph{graph universal partial word}, or \emph{gupword}, for a family of $n$-vertex graphs $\mathcal{F}$ is an ordered partial graph $G$ that contains each graph in $\mathcal{F}$ exactly once.
	\end{definition}
	
	As in the definition of gucycles (Definition \ref{def:gucycle}), we assume that gupcycles and gupwords have no edges that are irrelevant to the $n$-windows, unless otherwise specified. 
	
	\begin{example}
		Figure \ref{fig:cyclicgluing} shows an ordered partial graph on the left and a cyclically ordered partial graph on the right, which are a graph universal partial word and a graph universal partial cycle, respectively, for the labeled graphs on vertex set $[3]$. The diamond edges are drawn with dashed arcs. The $i$th $3$-window of the gupword is the same as the $i$th $3$-window of the gupcycle for every $i$. The second, third, and fourth $3$-windows contain two ordered graphs each. For example, the third window contains both a triangle $\set{12, 23, 13}$ and a path $\set{12, 23}$.

		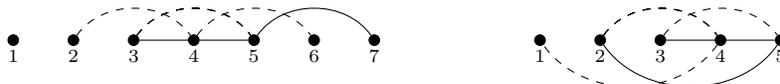
\begin{figure}[h!]
			\centering
			\tikzstyle{every node}=[circle, draw, fill=black, inner sep=0pt, minimum width=4pt]
			\begin{tikzpicture}[scale=0.4]
				\node[label={[shift={(0,-0.4)}]\scriptsize{1}}] (0) at (-2,0) {};
				\node[label={[shift={(0,-0.4)}]\scriptsize{2}}] (1) at (0,0) {};
				\node[label={[shift={(0,-0.4)}]\scriptsize{3}}] (2) at (2,0) {};
				\node[label={[shift={(0,-0.4)}]\scriptsize{4}}] (3) at (4,0) {};
				\node[label={[shift={(0,-0.4)}]\scriptsize{5}}] (4) at (6,0) {};
				\node[label={[shift={(0,-0.4)}]\scriptsize{6}}] (5) at (8,0) {};
				\node[label={[shift={(0,-0.4)}]\scriptsize{7}}] (6) at (10,0) {};
				
				\draw
				(3) to (4)
				(2) to (3)
				(4) to[bend left=55] (6);
				
				\draw[dashed]
				(1) to[bend left=55] (3)
				(2) to[bend left=55] (4)
				(3) to[bend left=55] (5)
				(2) to[bend left=55] (4);

				\draw[dashed, color=white]
				(4) to[bend left=55] (1);
				
			\end{tikzpicture}
			\hspace{5em}
			\begin{tikzpicture}[scale=0.4]
				\node[label={[shift={(0,-0.4)}]\scriptsize{1}}] (1) at (0,0) {};
				\node[label={[shift={(0,-0.4)}]\scriptsize{2}}] (2) at (2,0) {};
				\node[label={[shift={(0,-0.4)}]\scriptsize{3}}] (3) at (4,0) {};
				\node[label={[shift={(0,-0.4)}]\scriptsize{4}}] (4) at (6,0) {};
				\node[label={[shift={(0,-0.4)}]\scriptsize{5}}] (5) at (8,0) {};
				
				\draw
				(5) to[bend left=55] (2)
				(3) to (4)
				(4) to (5);
				
				\draw[dashed]
				(2) to[bend left=55] (4)
				(3) to[bend left=55] (5)
				(4) to[bend left=55] (1)
				(2) to[bend left=55] (4);
				
			\end{tikzpicture}
			\caption{A gupword and a gupcycle for the labeled graphs on vertex set $[3]$, which have the same $3$-windows.}
			\label{fig:cyclicgluing}
		\end{figure}
		
	\end{example}
	
	We will also find it useful to extend the definition of isomorphism to partial graphs.
	
	\begin{definition}
		If $G$ and $H$ are $N$-vertex ordered partial graphs, we say $G$ and $H$ are \emph{isomorphic as ordered partial graphs} (or G is \emph{isomorphic as an ordered partial graph} to $H$), denoted by $G \oiso H$, if $\red(G) = \red(H)$, i.e., the unique bijection between $V(G)$ and $V(H)$ that is specified by their total orders preserves the edges, non-edges, and diamond edges.
	\end{definition}

	\subsection{Twins and Compression}
	
	In Sections \ref{sec:labeled}, \ref{sec:perms}, and \ref{sec:open} we will use the idea of compressing twin edges. \emph{Twin edges}, also called multiple edges or parallel edges, are directed edges having the same head vertices and the same tail vertices. We are sometimes able to \emph{compress} a pair of twin edges $e$ and $f$ by replacing them with a single edge $g$ that has the same head and tail vertices as $e$ and $f$, and that is labeled in such a way that it represents the labels of $e$ and $f$. As slightly different definitions of compression are needed in different settings, we will define it more precisely in Sections \ref{sec:labeled}, \ref{sec:perms}, and \ref{sec:open}. After compressing all pairs of twin edges in a directed multigraph, we have \emph{fully compressed} it, while a \emph{partially compressed} directed multigraph is the result of compressing some but not necessarily all of the twin edges.
	
	\subsection{Tours, the Arc Digraph, Overlaps, and Gluing}
	\begin{definition}
		A \emph{tour} in a (directed) graph $G$ is a closed walk with no repeated edges. An \emph{Euler tour} is a tour that covers every edge in $G$ exactly once. A (directed) graph is \emph{Eulerian} if it has an Euler tour. A directed graph $G$ is \emph{balanced} if every vertex has in-degree equal to its out-degree, and \emph{strongly connected} if, for every pair of distinct vertices $u$ and $v$, there is a directed path from $u$ to $v$.
	\end{definition}
	
	We will use the following standard fact.
	
	\begin{prop}
		Every balanced and strongly connected directed graph is Eulerian.
	\end{prop}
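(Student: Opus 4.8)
The plan is to give the standard constructive (Hierholzer-style) argument: build an Euler tour by repeatedly forming and splicing closed trails, with the balanced hypothesis guaranteeing that every maximal trail closes up and the strong connectivity hypothesis guaranteeing that any leftover edges can always be attached.

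First I would fix a starting vertex $v_0$ and grow a trail greedily, leaving the current vertex along any unused edge and continuing until no unused outgoing edge remains. The key local observation is that this process can only terminate back at $v_0$. Indeed, let $w$ be the vertex where the trail gets stuck. Each intermediate visit to a vertex consumes one incoming and one outgoing edge, and the initial step out of $v_0$ consumes an outgoing edge with no matching incoming edge; so, counting only the edges used by the trail, a stuck endpoint $w \neq v_0$ would have exactly one more used incoming edge than used outgoing edges. Since $G$ is balanced, the total in-degree and out-degree of $w$ are equal, so $w$ must still have an unused outgoing edge, contradicting that we are stuck. Hence $w = v_0$ and the trail is a closed trail (a tour) $T$.

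Next I would show that if $T$ is not already an Euler tour, it can be enlarged. Let $H = G - E(T)$ be the digraph of unused edges; since $T$ is closed it is balanced at every vertex, and $G$ is balanced by hypothesis, so $H$ is balanced as well. Suppose $E(H) \neq \emptyset$. The crucial step is to locate a vertex of $T$ that is the tail of an unused edge. If no vertex on $T$ had an unused out-edge, then every edge with tail in $V(T)$ would belong to $T$ and hence stay inside $V(T)$; strong connectivity would then force $V(T) = V(G)$, since otherwise no vertex outside $V(T)$ is reachable from $V(T)$, and in that case every edge of $G$ would already lie in $T$, contradicting $E(H) \neq \emptyset$. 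So some vertex $u \in V(T)$ is the tail of an unused edge.

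Finally, I would start a new greedy trail at $u$ using only edges of $H$. By the same balanced-vertex argument applied to the balanced digraph $H$, this trail closes up into a tour $T'$ through $u$. Splicing $T'$ into $T$ at a visit to $u$ (the two trails share no edges) yields a strictly longer closed trail. Iterating this enlargement terminates after finitely many steps, since each step strictly increases the number of covered edges, and produces a closed trail using every edge, i.e., an Euler tour. I expect the main obstacle to be precisely the strong-connectivity step: the balanced condition alone yields closed trails, but without connectivity the unused edges could form a separate component with no attachment point, so the argument that some vertex of the current tour must be the tail of an unused edge is the heart of the matter.
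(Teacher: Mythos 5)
Your proof is correct. Note, however, that the paper does not prove this proposition at all: it introduces it with ``We will use the following standard fact'' and cites no argument, so there is no proof in the paper to compare against. What you have written is the standard Hierholzer-style argument for directed graphs, and it is complete: the parity count showing a greedy trail can only get stuck at its starting vertex is right (each intermediate visit to $w$ pairs one used in-edge with one used out-edge, so a stuck endpoint $w \ne v_0$ would violate balance); the residual digraph $H$ is balanced because both $G$ and the closed trail $T$ are; and your use of strong connectivity to guarantee that some vertex of $T$ is the tail of an unused edge --- including the case $V(T) = V(G)$ --- is exactly the step where connectivity is genuinely needed, as you correctly identify. The splicing and termination arguments are standard and stated accurately.
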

	
	The De Bruijn cycles for words of length $n$ over an alphabet $\mathcal{A}$ correspond bijectively to the Euler tours of the De Bruijn graph, a directed graph that encodes the overlaps between words. The number of De Bruijn cycles was obtained in 1951 by counting these Euler tours.
	
	\begin{theorem}[Van Aardenne-Ehrenfest and De Bruijn \cite{AB51}, Smith and Tutte \cite{TS41}]\label{thm:DBcount}
		The number of De Bruijn cycles for $\mathcal{A}^n$, where $a = \abs{\mathcal{A}}$, is $\frac{(a!)^{a^{n-1}}}{a^{n}}$.
	\end{theorem}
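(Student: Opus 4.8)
The plan is to translate the count of De Bruijn cycles into a count of Euler tours, apply the BEST theorem, and reduce the whole problem to counting spanning arborescences of the De Bruijn graph. First I would set up the \emph{De Bruijn graph} $B = B(a,n)$: its vertices are the words in $\mathcal{A}^{n-1}$, so $\abs{V(B)} = a^{n-1}$, and each word $w \in \mathcal{A}^{n}$ contributes a directed edge from the length-$(n-1)$ prefix of $w$ to its length-$(n-1)$ suffix, so $\abs{E(B)} = a^{n}$. Every vertex has in-degree and out-degree exactly $a$, so $B$ is balanced, and $B$ is strongly connected; hence $B$ is Eulerian by the Proposition. Using the stated bijection between De Bruijn cycles and Euler tours of $B$, it suffices to count the Euler tours of $B$. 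Both sides are counted up to cyclic rotation, and since each edge of $B$ is used exactly once by an Euler tour, fixing a starting edge picks out exactly one representative of each rotation class, so the two counts agree with no correction factor.

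Next I would invoke the BEST theorem: for a connected Eulerian digraph, the number of Euler tours equals $\tau(B)\prod_{v}(\deg^{+}(v)-1)!$, where $\tau(B)$ denotes the number of spanning arborescences rooted toward a fixed vertex (this count is independent of the chosen root because $B$ is Eulerian). Since every out-degree in $B$ equals $a$, the product factor is $((a-1)!)^{a^{n-1}}$, so the number of Euler tours is $\tau(B)\cdot((a-1)!)^{a^{n-1}}$.

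The crux, and the step I expect to be the main obstacle, is evaluating $\tau(B)$. Here I would apply the directed Matrix--Tree theorem, which identifies $\tau(B)$ with a cofactor of the Laplacian $L = aI - A$, where $A$ is the adjacency matrix of $B$. The structural fact that drives the computation is that between any ordered pair of vertices there is exactly one directed walk of length $n-1$ (the intermediate letters are forced by the overlaps), i.e.\ $A^{n-1} = J$, the all-ones matrix. Combining this with $a$-regularity shows that $A$ has eigenvalue $a$ with multiplicity $1$ and eigenvalue $0$ with multiplicity $a^{n-1}-1$, so the nonzero eigenvalues of $L$ are all equal to $a$. Strong connectivity guarantees that $0$ is a simple eigenvalue of $L$, and because $B$ is Eulerian all cofactors of $L$ coincide; the common value is the product of the nonzero eigenvalues of $L$ divided by $\abs{V(B)}$, giving $\tau(B) = \frac{1}{a^{n-1}}\,a^{\,a^{n-1}-1} = a^{\,a^{n-1}-n}$.

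Finally I would assemble the pieces: the number of De Bruijn cycles is $a^{\,a^{n-1}-n}\cdot((a-1)!)^{a^{n-1}}$. Writing $a! = a\,(a-1)!$ turns $((a-1)!)^{a^{n-1}}$ into $(a!)^{a^{n-1}}/a^{\,a^{n-1}}$, and substituting collapses the expression to $\frac{(a!)^{a^{n-1}}}{a^{n}}$, as claimed. The only genuinely nontrivial ingredient is the arborescence count; everything else is bookkeeping around the two standard theorems.
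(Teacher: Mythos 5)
The paper does not prove this statement at all: it is quoted as a classical result and attributed to van Aardenne-Ehrenfest--de Bruijn and Smith--Tutte, so there is no internal proof to compare against. Your argument is the standard (and historically original) derivation --- indeed the BEST theorem is named after the very authors cited --- and it is correct as written. The three key steps all check out: the identification of De Bruijn cycles (up to rotation) with Euler tours of the De Bruijn graph, with the rotation bookkeeping handled correctly since no cyclic edge sequence is fixed by a nontrivial rotation; the BEST formula contributing $((a-1)!)^{a^{n-1}}$; and the arborescence count via $A^{n-1}=J$, which forces the spectrum of $A$ to be $a$ with multiplicity $1$ and $0$ with multiplicity $a^{n-1}-1$, hence $\tau(B)=a^{a^{n-1}-1}/a^{n-1}=a^{a^{n-1}-n}$. (A sanity check at $a=2$, $n=3$ gives $\tau=2$ and $2$ De Bruijn cycles, matching $(2!)^{4}/2^{3}=2$.) The only mild redundancy is invoking strong connectivity to get simplicity of the zero eigenvalue of $L$, which already follows from your spectral computation.
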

	
	To work with graph universal (partial) cycles we will need the following analogous definitions of overlaps and a directed multigraph that encodes them, which we call the arc digraph, following \cite{BKS10, CGGP21}.
	
	\begin{definition}\label{def:partialgraphoverlap}
		Let $G$ be a (partial) graph on vertex set $[m]$ and $H$ be a (partial) graph on vertex set $[n]$. For $1 \le s \le \min\set{m,n}-1$, the ordered pair $(G,H)$ \emph{overlaps by $s$} if $G[m-s+1, \ldots, m] \oiso H[1, \ldots, s]$, i.e., the $s$-windows at the end of $G$ and beginning of $H$ are isomorphic as ordered (partial) graphs. In the case $m=n$, the ordered pair $(G,H)$ \emph{overlaps} if it overlaps by $n-1$.
	\end{definition}
	
	\begin{definition}
		The \emph{arc digraph for labeled graphs on $[n]$} (or \emph{arc digraph}), denoted by $D_n$, is the directed multigraph whose vertices are the labeled graphs on vertex set $[n-1]$ and whose edge set is the set of labeled graphs on vertex set $[n]$. An edge $e$ points from $u$ to $v$ if and only if $e[1,2,\ldots,n-1] \oiso u$ and $e[2,3,\ldots,n]\oiso v$.
	\end{definition}
	
	We now define the operation of gluing two overlapping graphs in order to construct gucycles and gupcycles from Euler tours.
	
	\begin{lemma}[Gluing Lemma]\label{lem:gluing}
		Let $G$ be a (partial) graph on vertex set $[m]$ and $H$ be a (partial) graph on vertex set $[n]$. Then the pair $(G,H)$ overlaps by $s$ if and only if there exists a (partial) graph $J$ on vertex set $[m+n-s]$ such that $J[1,2,\ldots,m] \oiso G$ and $J[m-s+1,m-s+2,\ldots,m-s+n] \oiso H$.
	\end{lemma}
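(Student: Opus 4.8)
The plan is to prove the two directions separately, constructing the glued graph $J$ explicitly in the forward direction and then recovering the overlap by restricting $J$ in the reverse direction; the single place where the overlap hypothesis does real work is the consistency of the two prescriptions on the shared block of $s$ vertices.

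For the forward direction, I would assume $(G,H)$ overlaps by $s$ and build $J$ on $[m+n-s]$ directly by partitioning $\binom{[m+n-s]}{2}$ according to whether both endpoints lie in the ``$G$-part'' $\{1,\ldots,m\}$, both lie in the ``$H$-part'' $\{m-s+1,\ldots,m+n-s\}$, or one endpoint lies exclusively in each. For a pair with both endpoints $\le m$ I would assign it the status (edge, non-edge, or diamond edge) it has in $G$; for a pair with both endpoints $\ge m-s+1$ I would translate via the index shift $m-s+a \leftrightarrow a$ and assign it the status of the corresponding pair in $H$; and for the remaining ``cross'' pairs I would assign an arbitrary status, say non-edge. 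The only pairs to which both of the first two rules apply are those with both endpoints in the overlap block $\{m-s+1,\ldots,m\}$, and there the two prescriptions agree \emph{precisely} because the hypothesis $G[m-s+1,\ldots,m]\oiso H[1,\ldots,s]$ says the corresponding pairs carry the same status. Hence $J$ is well defined, and by construction $J[1,\ldots,m]\oiso G$ and $J[m-s+1,\ldots,m+n-s]\oiso H$.

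For the reverse direction, I would assume such a $J$ exists and observe that both $G[m-s+1,\ldots,m]$ and $H[1,\ldots,s]$ reduce to the \emph{same} $s$-window of $J$, namely $J[m-s+1,\ldots,m]$. Since $J[1,\ldots,m]\oiso G$ and ordered partial isomorphism is preserved under passing to corresponding windows, restricting to the last $s$ positions gives $G[m-s+1,\ldots,m]\oiso J[m-s+1,\ldots,m]$; likewise, since $J[m-s+1,\ldots,m+n-s]\oiso H$, restricting to the first $s$ positions gives $H[1,\ldots,s]\oiso J[m-s+1,\ldots,m]$. Transitivity of $\oiso$ then yields $G[m-s+1,\ldots,m]\oiso H[1,\ldots,s]$, which is exactly the definition of $(G,H)$ overlapping by $s$.

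The routine-but-delicate part is the index arithmetic around the reduction maps: I must confirm that the $s$-window at the end of the reduced graph $J[1,\ldots,m]$ and the $s$-window at the start of the reduced graph $J[m-s+1,\ldots,m+n-s]$ both correspond to the same original vertices $\{m-s+1,\ldots,m\}$ of $J$, so that the ``window of a window'' coincides with the expected window $J[m-s+1,\ldots,m]$. I expect the main conceptual obstacle to be the forward direction's consistency check on the overlap block, since that is the only step that uses the overlap hypothesis; the rest reduces to noting that each cross pair has one endpoint $\le m-s$ and the other $\ge m+1$, hence lies in neither target window and may be assigned freely, and that $\oiso$ behaves well under taking corresponding windows.
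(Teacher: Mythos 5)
Your proposal is correct and follows essentially the same route as the paper's proof: construct $J$ by copying $G$ and $H$ onto their respective vertex blocks (with the overlap hypothesis guaranteeing consistency on the shared block and the cross pairs assigned freely), and for the converse restrict both windows of $J$ to the common $s$-window $J[m-s+1,\ldots,m]$ and use transitivity of $\oiso$. The paper states this more tersely, but the content is identical.
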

	
	\begin{proof}Given such a (partial) graph $J$, its induced subgraphs $G:=J[1,2,\ldots,m]$ and $H:=J[m-s+1,m-s+2,\ldots,m-s+n]$ must overlap by $s$ since $G[m-s+1,m-s+2,\ldots,m]\oiso J[m-s+1,m-s+1,\ldots,m] \oiso H[1,2,\ldots,s]$. For the opposite direction, a (partial) graph $J$ can be constructed by copying $G$ to $J[1,2,\ldots,m]$ and copying $H$ to $J[m-s+1,m-s+2,\ldots,m-s+n]$, which creates no conflicts since $(G,H)$ overlaps by $s$.
	\end{proof}
	
	In the Gluing Lemma, note that the presence or absence of any edges (or diamond edges) of $J$ with one vertex in $\set{1,\ldots,m-s}$ and the other in $\set{m+1,\ldots,m+n-s}$ is irrelevant, so multiple such (partial) graphs $J$ exist. (For example, any pair of twin edges from $G$ to $H$ in $D_n$ represents two different such graphs $J$; see Observations \ref{obs:twinedge} and \ref{obs:twinexistence}.) Lemma \ref{lem:gluing} makes possible the following definition of gluing.
	
	\begin{definition}
		If $G$ is a (partial) graph on vertex set $[m]$, $H$ is a (partial) graph on vertex set $[n]$, and $(G, H)$ overlaps by $s$, the \emph{gluing of $(G,H)$} is the unique (partial) graph $J$ on vertex set $[m+n-s]$ such that $J[1,2,\ldots,m] \oiso G$ and $J[m-s+1,m-s+2,\ldots,m-s+n] \oiso H$, and that has no edges (or diamond edges) with one vertex in $\set{1,\ldots,m-s}$ and the other in $\set{m+1,\ldots,m+n-s}$.
	\end{definition}
	
	To construct graph universal (partial) cycles we will also need to be able to glue together the beginning and end of an ordered (partial) graph, creating a cyclically ordered (partial) graph.
	
	\begin{lemma}[Cyclic Gluing Lemma]\label{lem:cyclicgluing}
		Let $G$ be a (partial) graph on vertex set $[m]$. Suppose $G$ has no edges (and no diamond edges) with one vertex in $\set{1,2,\ldots, n-1}$ and the other in $\set{m-n+2,m-n+3,\ldots,m}$, and no edges (or diamond edges) between vertices at distance greater than $n-1$. If the pair $(G,G)$ overlaps then there exists a cyclically ordered (partial) graph $J$ on vertex set $[m-n+1]$ such that $G$ as an ordered (partial) graph and $J$ as a cyclically ordered (partial) graph have the same $n$-windows, in the same order, for all $n$ satisfying $m-n+1 > 2(n-1)$.
		
		Conversely, let $J$ be a cyclically ordered (partial) graph $J$. Let $L$ be its length, let $n$ satisfy $L > 2(n-1)$, and suppose $J$ has no edges (or diamond edges) between vertices at distance greater than $n-1$. Then there exists an ordered (partial) graph $G$ having the same $n$-windows as $J$.
	\end{lemma}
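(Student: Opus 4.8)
The plan is to make the cyclic gluing explicit, exhibiting $J$ and then checking its windows against those of $G$ position by position. Write $L := m-n+1$ and, for a pair $\{x,y\}$, let $G(x,y)$ denote its three-valued status (edge, non-edge, or diamond edge); an ordered-partial-graph isomorphism $\oiso$ preserves this status. I read the hypothesis that $(G,G)$ overlaps as the assertion that the tail and head $(n-1)$-windows agree, $G[m-n+2,\ldots,m]\oiso G[1,\ldots,n-1]$, which under position matching says
\[
G(L+k_1,\,L+k_2)=G(k_1,k_2)\qquad(1\le k_1<k_2\le n-1),
\]
identifying the late vertex $L+k$ with the early vertex $k$. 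This is the overlap by $n-1$ that is relevant to $n$-windows, and it is exactly the condition that lets the ends of $G$ be wrapped onto its beginning.

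For the forward direction I would define $J$ on $[L]$ directly rather than by naively restricting $G$. Call a pair $\{p,q\}$ with $p<q$ \emph{close} if $q-p\le n-1$ and \emph{wrapping} if $L-(q-p)\le n-1$; set $J(p,q):=G(p,q)$ when it is close, $J(p,q):=G(q,L+p)$ when it is wrapping, and declare $\{p,q\}$ a non-edge otherwise. The first thing to record is that $L>2(n-1)$ forces these two cases to be mutually exclusive (a pair that were both close and wrapping would give $L\le 2(n-1)$), so $J$ is well defined; this is the precise role of the size condition. Note also that in the wrapping case $L+p\in\{m-n+2,\ldots,m\}$ and $(L+p)-q=L-(q-p)\le n-1$, so $G(q,L+p)$ always refers to a genuine short-distance pair of $G$.

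The core of the argument is to verify $J[i,i+1,\ldots,i+n-1\bmod L]\oiso G[i,i+1,\ldots,i+n-1]$ for every $i\in\{1,\ldots,L\}$, using the bijection sending the $j$th vertex of one window to the $j$th vertex of the other; since both $G$ and $J$ have no edges at distance exceeding $n-1$, matching all $L$ windows in order is exactly the claim that the two objects have the same $n$-windows. A window that does not wrap ($i+n-1\le L$) uses only close pairs, so $J$ and $G$ agree by definition. For a wrapping window I would split each pair of positions $(j_1,j_2)$ according to whether the corresponding $G$-vertices lie $\le L$ or $>L$: when both are $\le L$ the pair is close and handled directly; when both exceed $L$ the two windows compare $G(L+k_1,L+k_2)$ against $G(k_1,k_2)$, which agree precisely by the overlap hypothesis above; and when one is $\le L$ and the other exceeds $L$ the $J$-pair is wrapping and the definition $J(p,q)=G(q,L+p)$ was rigged to reproduce the $G$-value. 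The only thing to check in this last case is that such a pair really is wrapping rather than close, which is again where $L>2(n-1)$ enters. (The hypothesis that $G$ has no edge between $\{1,\ldots,n-1\}$ and $\{m-n+2,\ldots,m\}$ is in fact automatic once $L>2(n-1)$, since those blocks then lie at distance $>n-1$; it serves only to keep the statement self-contained.)

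For the converse I would run the same bijection backwards: given $J$ on $[L]$ with no edges at cyclic distance $>n-1$, set $m:=L+n-1$ and build $G$ on $[m]$ by copying the close pairs of $J$ onto $[L]$, appending vertices $L+1,\ldots,m$ as copies of the $J$-vertices $1,\ldots,n-1$, and defining every pair meeting the appended block through the corresponding wrapping pair of $J$. By construction $G$ has no edges at distance $>n-1$ and satisfies $G[m-n+2,\ldots,m]\oiso G[1,\ldots,n-1]$, and the identical three-case comparison yields $G[i,\ldots,i+n-1]\oiso J[i,\ldots,i+n-1\bmod L]$ for all $i$. I expect the main obstacle in writing this cleanly to be pure bookkeeping: pinning down the position-matching bijection and the seam index arithmetic so that the three cases are stated unambiguously. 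The mathematical content reduces to the two inequalities separating ``close'' from ``wrapping'' and to the single use of the overlap hypothesis; and since every equality is an equality of the three-valued status preserved by $\oiso$, the whole argument applies verbatim to partial graphs.
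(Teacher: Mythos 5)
Your construction of $J$ via the close/wrapping dichotomy is exactly the paper's identification of vertices $\{1,\ldots,n-1\}$ with $\{m-n+2,\ldots,m\}$, and your use of $L>2(n-1)$ to keep relocated (wrapping) pairs out of the non-wrapping windows, together with the overlap hypothesis for the seam windows, matches the paper's verification; the converse is likewise the same edge-relocation argument. The proposal is correct and takes essentially the same approach as the paper, just with the case analysis made more explicit.
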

	
	\begin{proof}Let $G$ be such an ordered (partial) graph on vertex set $[m]$ for which $(G,G)$ overlaps. Identify the vertices $\set{1,2,\ldots, n-1}$ with the vertices $\set{m-n+2,m-n+3,\ldots,m}$, in order, to form a cyclically ordered (partial) graph $J$ on vertex set $[m-n+1]$. In other words, for each vertex $v \in \set{m-n+2,m-n+3,\ldots,m}$, replace all edges (and diamond edges) $\set{v,w}$, where $w \le m-n+1$, with $\set{v-m+n-1,w}$; then delete $v$ and its other edges (and diamond edges). By hypothesis, $w \ge n$. The edges being added have the form $\set{v-m+n-1,w}$, where $1 \le v-m+n-1 \le n-1$. 
		Since every edge (or diamond edge) $\set{v,w}$ in $G$ is at distance at most $n-1$, we have $v-w\le n-1$, so the distance from $v-m+n-1$ to $w$ is $w-(v-m+n-1)\ge m-2(n-1)>n-1$, i.e., $\set{v-m+n-1,w}$ is not in any $n$-window of the form $J[i, i+1, \ldots, i+n-1]$ for $i \in [m-2(n-1)]$. Therefore we have $J[i, i+1, \ldots, i+n-1] = G[i, i+1, \ldots, i+n-1]$ for every $i \in [m-2(n-1)]$. The other $n$-windows of $J$ have the form $J[i, \ldots, m-n+1, 1, \ldots, 2(n-1)-m+i]$ for $i \in [m-n+1]\setminus[m-2(n-1)]$. By construction such windows are isomorphic as ordered (partial) graphs to $G[i, \ldots, m-n+1, m-n+2, \ldots, i+n-1]$ for every $i \in [m-n+1]\setminus[m-2(n-1)]$.
		
		For the other direction, let $J$ be a cyclically ordered (partial) graph on vertex set $[L]$. For $n$ satisfying $L > 2(n-1)$, let $m = L+n-1$. Construct an ordered (partial) graph $G$ on vertex set $[m]$ as follows. First, let $G$ have the same edges and non-edges (and diamond edges) as $J$ on $[L]$, and no edges (or diamond edges) on $[m]\setminus[L]$. For every edge (or diamond edge) $\set{u,w}$ of $J$ with $1 \le u \le n-1$ and $n \le w \le L$, either the distance from $u$ to $w$ is at most $n-1$, or the distance from $w$ to $u$ is at most $n-1$, but not both (because $L > 2(n-1)$) or neither (by hypothesis). In $G$, keep $\set{u,w}$ if the distance from $u$ to $w$ is at most $n-1$, and replace it with $\set{u+L,w}$ otherwise. As a result we have $G[i, i+1, \ldots, i+n-1] = J[i, i+1, \ldots, i+n-1]$ for every $i \in [L-n+1]$, and $G[i, \ldots, L, L+1, \ldots, i+n-1] \oiso J[i, i+1, \ldots, L, 1, \ldots, n-L+i-1]$ for every $i \in [m-n+1]\setminus[L-n+1]$.
	\end{proof}
	
	We will also need the following generalization of the Cyclic Gluing Lemma.
	
	\begin{lemma}[$s$-Overlap Cyclic Gluing Lemma]\label{lem:scyclicgluing} Let $n$, $s$, $m$, and $k$ be positive integers satisfying $1 \le s\le n-1$, $m=n+k(n-s)$, and $m-s>2(n-1)$.
		Let $G$ be a (partial) graph on vertex set $[m]$. Suppose $G$ has no edges (and no diamond edges) with one vertex in $\set{1,2,\ldots, s}$ and the other in $\set{m-s+1,m-s+2,\ldots,m}$, and every edge (or diamond edge) is contained in an $n$-window of the form $G[1+i(n-s), \ldots, n+i(n-s)]$ for some $0\le i \le k$. If the pair $(G,G)$ overlaps by $s$ then there exists a cyclically ordered (partial) graph $J$ on vertex set $[m-s]$ such that $G[1+i(n-s), \ldots, n+i(n-s)] \oiso J[1+i(n-s), \ldots, n+i(n-s)]$ for all $0\le i \le k$.
	\end{lemma}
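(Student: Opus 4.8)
The plan is to adapt the folding construction from the proof of the Cyclic Gluing Lemma (Lemma~\ref{lem:cyclicgluing}), but to fold $G$ onto itself along the $s$-overlap instead of an $(n-1)$-overlap. Since $(G,G)$ overlaps by $s$ we have $G[m-s+1,\ldots,m]\oiso G[1,\ldots,s]$, so I would identify each vertex $v\in\set{m-s+1,\ldots,m}$ with the vertex $v-(m-s)\in\set{1,\ldots,s}$: replace every edge (or diamond edge) $\set{v,w}$ with $w\le m-s$ by $\set{v-(m-s),w}$, and then delete the vertices $\set{m-s+1,\ldots,m}$. Writing $d=n-s$ and $m-s=(k+1)d$, this produces a cyclically ordered partial graph $J$ on $[m-s]$ together with the folding map $\phi\colon V(G)\to V(J)$, where $\phi(x)=x$ for $x\le m-s$ and $\phi(x)=x-(m-s)$ otherwise. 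Because each $V_i:=\set{1+id,\ldots,n+id}$ is an interval of $n<m-s$ consecutive vertices, $\phi$ restricts to an order-isomorphism of $V_i$ onto the cyclic window $J[1+id,\ldots,n+id]$ (wrapping past the seam at most once); granting well-definedness of $J$, this order-isomorphism preserves edge/non-edge/diamond status by construction, giving $G[1+id,\ldots,n+id]\oiso J[1+id,\ldots,n+id]$ for each $0\le i\le k$, which is the desired conclusion.

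So the real content is that $J$ is well defined. Equivalently, I would assign to each pair $\set{a,b}$ of $J$ the status that $G$ gives the corresponding pair in any aligned window $\tilde{W}_i:=J[1+id,\ldots,n+id]$ containing it (and a non-edge if none does, which is irrelevant to the windows), and then verify consistency: if $\set{a,b}$ lies in two aligned windows and they yield $G$-pairs $P_i\ne P_j$, those pairs must receive the same status in $G$. Since every $W_i:=G[1+id,\ldots,n+id]$ is a window of the \emph{single} graph $G$, whenever $\phi$ interprets $a$ and $b$ the same way in both windows one has $P_i=P_j$ and there is nothing to check; in particular all of the ``interior'' overlaps, where consecutive $W_i$ literally share $s$ vertices, are automatic. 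The interpretations can differ only through the fold, and a vertex folds only if it is $\le s$.

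I expect the case analysis of the differing interpretations to be where the work lies, and the ``mixed'' case to be the main obstacle. If both of $a,b$ fold in one window and neither folds in the other, then $P_i$ and $P_j$ correspond under the overlap isomorphism $G[m-s+1,\ldots,m]\oiso G[1,\ldots,s]$ and hence share a status. The dangerous case is when (say) $a\le s$ folds in a wrapping window but not in a low window, while $b$ is interpreted identically in both: then $P_i=\set{a+(m-s),b}$ and $P_j=\set{a,b}$ are unrelated $G$-pairs. The hypothesis $m-s>2(n-1)$ is exactly what makes this impossible: the un-folded partners $b$ accompanying $a$ in a low window satisfy $b\le n+a-1$, while those accompanying $a+(m-s)$ in a wrapping window satisfy $b\ge a+(m-s)-n+1$, and $n+a-1<a+(m-s)-n+1$ is precisely $m-s>2(n-1)$, so the two ranges are disjoint. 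The only remaining configurations have the two endpoints folding in opposite windows; these cannot occur because the same bound shows that no $n$-window of $G$ can meet both $\set{1,\ldots,s}$ and $\set{m-s+1,\ldots,m}$ (their distance exceeds $n-1$), which is also why the no-between-ends hypothesis holds. With every overlap of aligned windows thus consistent, $J$ is well defined, and the order-isomorphisms $\phi|_{V_i}$ finish the proof; the hypothesis that every edge lies in some aligned window guarantees that nothing is lost or displaced by the fold.
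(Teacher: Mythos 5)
Your proposal is correct and follows essentially the same route as the paper's proof: the identical folding construction (identify $\set{m-s+1,\ldots,m}$ with $\set{1,\ldots,s}$ via the $s$-overlap) and the identical key inequality, namely that $m-s>2(n-1)$ forces any pair with one endpoint interpreted as folded and the other as un-folded to lie at distance at least $n$ in $G$, so it cannot interfere with (or be an edge in) the aligned windows. The only difference is organizational—you package the verification as a well-definedness/consistency check over the aligned windows, with the both-endpoints-fold case handled by the overlap isomorphism, where the paper directly checks that the relabeled edges avoid the non-wrapping windows and treats the wrapping windows as isomorphic by construction.
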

	
	It is helpful to notice that $m = n+k(n-s) = s + (k+1)(n-s)$.
	
	\begin{proof}
		Let $G$ be an ordered (partial) graph on $[m]$ for which $(G,G)$ overlaps by $s$. Identify the vertices $\{1,2,\dots,s\}$ with the vertices $\{m-s+1,m-s+2\dots, m\}$, in order, to form a cyclically ordered (partial) graph $J$ on vertex set $[m-s]$. In other words, for each vertex $v\in \{m-s+1,m-s+2,\dots,m\}$, replace all edges (and diamond edges) $\{v,w\}$, where $w\le m-s$, with $\{v-m+s,w\}$; then, delete $v$ and its other edges (and diamond edges). In $G$ there are no edges (or diamond edges) at distance at least $n$, so $v-w\le n-1$. Therefore, the distance from $v-m+s$ to $w$ is $w-(v-m+s)\ge -(n-1)+(m-s) >n-1$. That is, $\{v-m+s, w\}$ is not in any $n$-window of the form $J[i,i+1,\dots,i+n-1]$ for any $i\in[m-s-(n-1)]$. Therefore, $G[1+i(n-s), \ldots, n+i(n-s)] \oiso J[1+i(n-s), \ldots, n+i(n-s)]$ whenever $n+i(n-s)\le m-s$, or all $i\le k+1-\frac{n}{n-s}$. The other $n$-windows under consideration are of the form $J[1+i(n-s),2+i(n-s),\dots,m-s,1,2,\dots,n + (i-k-1)(n-s)]$ for $k+1-\frac{n}{n-s}< i\le k$. By construction, these windows are isomorphic as ordered (partial) graphs to $G[1+i(n-s),2+i(n-s),\dots,m-s, m-s+1, m-s+2, \dots,n+i(n-s)]$.
	\end{proof}
	
	\begin{definition}
		If the ordered (partial) graph $G$ satisfies the conditions of Lemma \ref{lem:cyclicgluing} or Lemma \ref{lem:scyclicgluing} then the \emph{cyclic gluing} of $G$ is the cyclically ordered (partial) graph $J$ constructed in the proof of the lemma.
	\end{definition}
	
	By definition, the cyclic gluing of $G$ has the same $n$-windows as $G$ and in the same order, and it has no edges (or diamond edges) that are irrelevant to the $n$-windows.
	
	By Lemmas \ref{lem:gluing} and \ref{lem:cyclicgluing}, an Euler tour in the arc digraph defines a gucycle for labeled graphs on vertex set $[n]$: given an Euler tour one can successively glue together the consecutive graphs, and provided the tour is long enough relative to $n$, the ends can be glued together to form a gucycle; also given a gucycle one can take its $n$-windows to form an Euler tour. Similarly an Euler tour of a partially compressed arc digraph defines a gupcycle provided that it is long enough. We will use the arc digraph to construct gupcycles for labeled graphs in Section \ref{sec:labeled} and to construct $f$-fold gucycles for unlabeled graphs in Section \ref{sec:ffold}. In Section \ref{sec:perms} we use a subgraph of $D_n$ that includes only the permutation graphs.
	
	The following table shows the construction of the gupword for labeled graphs on vertex set $[3]$ from Figure \ref{fig:cyclicgluing}. The first row shows each of its $3$-windows in order, corresponding to an Euler tour of a partially compressed arc digraph. Each ordered partial graph in the second row is the result of gluing the ordered partial graph to its left and the ordered partial graph above it. The resulting gupword can be cyclically glued with an overlap of $2$ to produce the gupcycle also shown in Figure \ref{fig:cyclicgluing}.
	
	\begin{center}
		\noindent\begin{tabular}{|c|c|c|c|c|}\hline
			\tikzstyle{every node}=[circle, draw, fill=black, inner sep=0pt, minimum width=4pt]
			\begin{tikzpicture}[scale=0.4]

				\node[label={[shift={(0,-0.4)}]\scriptsize{1}}] (1) at (0,0) {};
				\node[label={[shift={(0,-0.4)}]\scriptsize{2}}] (2) at (2,0) {};
				\node[label={[shift={(0,-0.4)}]\scriptsize{3}}] (3) at (4,0) {};

			\end{tikzpicture} & \tikzstyle{every node}=[circle, draw, fill=black, inner sep=0pt, minimum width=4pt]\begin{tikzpicture}[scale=0.4]

				\node[label={[shift={(0,-0.4)}]\scriptsize{1}}] (1) at (0,0) {};
				\node[label={[shift={(0,-0.4)}]\scriptsize{2}}] (2) at (2,0) {};
				\node[label={[shift={(0,-0.4)}]\scriptsize{3}}] (3) at (4,0) {};
				
				\draw
				(2) to (3);
				
				\draw[dashed]
				(1) to[bend left=55] (3);
				
			\end{tikzpicture} 
			& \tikzstyle{every node}=[circle, draw, fill=black, inner sep=0pt, minimum width=4pt]\begin{tikzpicture}[scale=0.4]

				\node[label={[shift={(0,-0.4)}]\scriptsize{1}}] (1) at (0,0) {};
				\node[label={[shift={(0,-0.4)}]\scriptsize{2}}] (2) at (2,0) {};
				\node[label={[shift={(0,-0.4)}]\scriptsize{3}}] (3) at (4,0) {};
				
				\draw
				(1) to (2)
				(2) to (3);
				
				\draw[dashed]
				(1) to[bend left=55] (3);
				
			\end{tikzpicture} & \tikzstyle{every node}=[circle, draw, fill=black, inner sep=0pt, minimum width=4pt]\begin{tikzpicture}[scale=0.4]

				\node[label={[shift={(0,-0.4)}]\scriptsize{1}}] (1) at (0,0) {};
				\node[label={[shift={(0,-0.4)}]\scriptsize{2}}] (2) at (2,0) {};
				\node[label={[shift={(0,-0.4)}]\scriptsize{3}}] (3) at (4,0) {};
				
				\draw (1) to (2);
				
				\draw[dashed]
				(1) to[bend left=55] (3);
				
			\end{tikzpicture} & \tikzstyle{every node}=[circle, draw, fill=black, inner sep=0pt, minimum width=4pt]\begin{tikzpicture}[scale=0.4]

				\node[label={[shift={(0,-0.4)}]\scriptsize{1}}] (1) at (0,0) {};
				\node[label={[shift={(0,-0.4)}]\scriptsize{2}}] (2) at (2,0) {};
				\node[label={[shift={(0,-0.4)}]\scriptsize{3}}] (3) at (4,0) {};
				
				\draw
				(1) to[bend left=55] (3);
				
			\end{tikzpicture}\\\hline
			\tikzstyle{every node}=[circle, draw, fill=black, inner sep=0pt, minimum width=4pt]
			\begin{tikzpicture}[scale=0.4]

				\node[label={[shift={(0,-0.4)}]\scriptsize{1}}] (1) at (0,0) {};
				\node[label={[shift={(0,-0.4)}]\scriptsize{2}}] (2) at (2,0) {};
				\node[label={[shift={(0,-0.4)}]\scriptsize{3}}] (3) at (4,0) {};

			\end{tikzpicture}
			& 
			\tikzstyle{every node}=[circle, draw, fill=black, inner sep=0pt, minimum width=4pt]\begin{tikzpicture}[scale=0.4]
				
				\node[label={[shift={(0,-0.4)}]\scriptsize{1}}] (1) at (0,0) {};
				\node[label={[shift={(0,-0.4)}]\scriptsize{2}}] (2) at (2,0) {};
				\node[label={[shift={(0,-0.4)}]\scriptsize{3}}] (3) at (4,0) {};
				\node[label={[shift={(0,-0.4)}]\scriptsize{4}}] (4) at (6,0) {};
				
				\draw(3) to (4);
				
				\draw[dashed]
				(2) to[bend left=55] (4)
				(2) to[bend left=55] (4);
				
			\end{tikzpicture} & \tikzstyle{every node}=[circle, draw, fill=black, inner sep=0pt, minimum width=4pt]\begin{tikzpicture}[scale=0.4]
				
				\node[label={[shift={(0,-0.4)}]\scriptsize{1}}] (1) at (0,0) {};
				\node[label={[shift={(0,-0.4)}]\scriptsize{2}}] (2) at (2,0) {};
				\node[label={[shift={(0,-0.4)}]\scriptsize{3}}] (3) at (4,0) {};
				\node[label={[shift={(0,-0.4)}]\scriptsize{4}}] (4) at (6,0) {};
				\node[label={[shift={(0,-0.4)}]\scriptsize{5}}] (5) at (8,0) {};
				
				\draw
				(3) to (4)
				(4) to (5);
				
				\draw[dashed]
				(2) to[bend left=55] (4)
				(3) to[bend left=55] (5)
				(2) to[bend left=55] (4);
				
			\end{tikzpicture} & \tikzstyle{every node}=[circle, draw, fill=black, inner sep=0pt, minimum width=4pt]\begin{tikzpicture}[scale=0.4]
				
				\node[label={[shift={(0,-0.4)}]\scriptsize{1}}] (1) at (0,0) {};
				\node[label={[shift={(0,-0.4)}]\scriptsize{2}}] (2) at (2,0) {};
				\node[label={[shift={(0,-0.4)}]\scriptsize{3}}] (3) at (4,0) {};
				\node[label={[shift={(0,-0.4)}]\scriptsize{4}}] (4) at (6,0) {};
				\node[label={[shift={(0,-0.4)}]\scriptsize{5}}] (5) at (8,0) {};
				\node[label={[shift={(0,-0.4)}]\scriptsize{6}}] (6) at (10,0) {};
				
				\draw
				(3) to (4)
				(4) to (5);
				
				\draw[dashed]
				(2) to[bend left=55] (4)
				(3) to[bend left=55] (5)
				(2) to[bend left=55] (4)
				(4) to[bend left=55] (6);
				
			\end{tikzpicture} & See Figure \ref{fig:cyclicgluing}\\\hline
		\end{tabular}
	\end{center}
	
	\section{Labeled Graphs}\label{sec:labeled}
	
	In \cite{BKS10} Brockman, Kay, and Snively proved the existence of gucycles for labeled graphs on $[n]$ for all $n \ge 3$. We introduce and prove the existence of graph universal partial cycles (gupcycles) for labeled graphs on $[n]$.
	
	Recall that the Euler tours of the arc digraph $D_n$ correspond to the gucycles for labeled graphs on $[n]$. We are interested in shortening such gucycles through the use of diamond edges. To accomplish this, we note that the edges of $D_n$ occur in pairs, which we call twins.
	
	\begin{definition}
		Two labeled graphs on $[n]$ are \emph{twins} if they are twin edges of $D_n$, i.e., 
		their induced subgraphs on vertex set $\set{1,2,\ldots,n-1}$ are the same, and their induced subgraphs on vertex set $\set{2,3,\ldots, n}$ are the same.
	\end{definition}
	
	\begin{observation}\label{obs:twinedge}
		Twin labeled graphs on $[n]$ differ only in the presence or absence of the edge $\set{1, n}$.
	\end{observation}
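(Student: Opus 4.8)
The plan is to prove the observation by a direct pair-by-pair comparison, organized around a simple partition of the vertex pairs. Since a labeled graph on $[n]$ is completely determined by which pairs in $\binom{[n]}{2}$ are edges, it suffices to show that twins $G$ and $H$ assign the same status---edge or non-edge---to every pair other than $\set{1,n}$.

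First I would classify each pair $\set{i,j}$ with $i<j$ according to the two windows appearing in the definition of twins. Such a pair is contained in $\set{1,2,\ldots,n-1}$ exactly when $j \le n-1$, and in $\set{2,3,\ldots,n}$ exactly when $i \ge 2$. Hence the unique pair lying in neither window is the one with $i=1$ and $j=n$, namely $\set{1,n}$; every other pair lies in at least one of the two windows. Next I would invoke the hypothesis: because $G$ and $H$ are twins, their induced subgraphs on $\set{1,2,\ldots,n-1}$ coincide and their induced subgraphs on $\set{2,3,\ldots,n}$ coincide. Therefore, for any pair $\set{i,j} \neq \set{1,n}$, the pair lies inside a window on which $G$ and $H$ agree, so it is an edge of $G$ if and only if it is an edge of $H$. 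This leaves $\set{1,n}$ as the only pair on which the two graphs can possibly differ, which is exactly the claim.

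I expect essentially no obstacle here: the entire content is the combinatorial observation that $\set{1,n}$ is the one pair straddling both ends of $[n]$, and this is precisely why each twin relation has exactly two representatives differing in that single edge. The only point meriting a word of care is the degenerate case of small $n$ (e.g.\ $n=2$), where the two windows are single vertices containing no pairs; even then the classification of pairs is unchanged and $\set{1,n}$ remains the unique unconstrained pair, so the conclusion still holds.
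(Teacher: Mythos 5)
Your argument is correct and is exactly the reasoning the paper leaves implicit: the paper states this as an Observation with no written proof, and the evident justification is precisely your partition of $\binom{[n]}{2}$ showing that $\set{1,n}$ is the unique pair contained in neither window $\set{1,\ldots,n-1}$ nor $\set{2,\ldots,n}$. Nothing is missing.
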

	
	\begin{observation}\label{obs:twinexistence}
		Every labeled graph on $[n]$ has a unique twin.
	\end{observation}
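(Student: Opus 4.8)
The plan is to prove existence and uniqueness separately, leaning entirely on Observation \ref{obs:twinedge}, which already pins down exactly how two twins may differ. The whole argument reduces to toggling a single pair of vertices and checking that this toggle is invisible to both the head and the tail of the corresponding edge of $D_n$.

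For existence, given a labeled graph $G$ on $[n]$, I would construct a candidate twin $G'$ by toggling only the pair $\{1,n\}$: let $G'$ agree with $G$ on every pair of vertices except $\{1,n\}$, and set $\{1,n\} \in E(G')$ if and only if $\{1,n\} \notin E(G)$. The key verification is that $G$ and $G'$ induce the same head and tail in $D_n$. This holds because the toggled pair lies in neither relevant window: since $n \notin \{1,\ldots,n-1\}$, the pair $\{1,n\}$ is not a pair of vertices of $G[1,\ldots,n-1]$, and since $1 \notin \{2,\ldots,n\}$, it is not a pair of vertices of $G[2,\ldots,n]$ either. Hence $G[1,\ldots,n-1] = G'[1,\ldots,n-1]$ and $G[2,\ldots,n] = G'[2,\ldots,n]$, so $G$ and $G'$ are edges of $D_n$ with the same tail and the same head. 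Because $G'$ disagrees with $G$ precisely on $\{1,n\}$, we have $G' \neq G$, so $G'$ is a genuine (parallel) twin edge rather than $G$ itself.

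For uniqueness, I would suppose $G'$ and $G''$ are each twins of $G$ and apply Observation \ref{obs:twinedge} to the pairs $(G,G')$ and $(G,G'')$. That observation forces each of $G'$ and $G''$ to agree with $G$ on every pair except possibly $\{1,n\}$; since a twin is a distinct edge, each must in fact differ from $G$ on $\{1,n\}$. Thus both $G'$ and $G''$ carry the opposite status of $\{1,n\}$ from $G$ and agree with $G$ everywhere else, giving $G' = G''$. I do not expect a real obstacle here: the only substantive point is the bookkeeping observation that the toggled pair $\{1,n\}$ sits outside both the $\{1,\ldots,n-1\}$ and the $\{2,\ldots,n\}$ windows, and the only care needed is to ensure the constructed $G'$ is distinct from $G$ and to invoke Observation \ref{obs:twinedge} in the direction that constrains \emph{any} twin of $G$.
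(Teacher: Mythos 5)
Your proof is correct. The paper states this as an Observation with no written proof, and your argument---toggling the pair $\set{1,n}$, which lies outside both windows $\set{1,\ldots,n-1}$ and $\set{2,\ldots,n}$, for existence, plus Observation \ref{obs:twinedge} (together with the requirement that a twin be a distinct edge) for uniqueness---is exactly the justification the paper implicitly relies on.
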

	
	It is useful to consider a compressed form of the arc digraph $D_n$ that has no twin edges. Given Observations \ref{obs:twinedge} and \ref{obs:twinexistence}, we lose no information in the compression.
	
	\begin{definition}
		We say that we \emph{compress a pair of twin edges} $e$ and $f$ if we replace the twin edges $e$ and $f$ in $G$ with a single edge $g$ that is a labeled partial graph on $[n]$ with $\set{1,n}$ as its sole diamond edge. The other pairs in $\binom{[n]}{2}$ are either edges or non-edges of $g$, consistent with both $e$ and $f$.
	\end{definition}
	
	\begin{definition}
		The \emph{fully compressed arc digraph} for labeled graphs on $[n]$, denoted by $D_n^*$, is the result of compressing every pair of twin edges in $D_n$.
	\end{definition}
	
	Alternatively, we can selectively compress some subset of twin pairs to construct gupcycles of different lengths, as seen in the following example.
	\begin{example}\label{ex:twincompression} We can compress along the twins of Figure \ref{fig:arcdigraph}.
		
		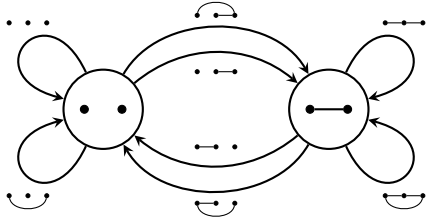
\begin{figure}[h!]
			\centering
			\tikzstyle{every node}=[circle, draw, fill=black, inner sep=0pt, minimum width=3pt]
			\begin{tikzpicture}[]
				
				\node[circle, draw, thick, fill=white, inner sep=0pt, minimum width=30pt] (0) at (.25,0) {};
				\node[circle, draw, thick, fill=white, inner sep=0pt, minimum width=30pt] (00) at (3.25,0) {};

				\draw[->, -stealth, thick] (00) to [bend right=-40] (0);
				\draw[->, -stealth, thick, bend left=60]
				(00) to (0);
				\draw[->, -stealth, thick, bend left=40]
				(0) to (00);
				\draw[->, -stealth, thick, bend left=60]
				(0) to (00);

				\draw[thick, ->, -stealth] (0) to [out=115,in=165,looseness=7] (0);
				\draw[thick, ->, -stealth] (0) to [out=245,in=195,looseness=7] (0);
				
				\draw[thick, ->, -stealth] (00) to [out=65,in=15,looseness=7] (00);
				\draw[thick, ->, -stealth] (00) to [out=-65,in=-15,looseness=7] (00);
				
				\node (1) at (0,0) {};
				\node (2) at (.5,0) {};
				\node (3) at (3, 0) {};
				\node (4) at (3.5,0) {};
				
				\draw[thick]
				(3) to (4);
				
				\node[circle, draw, fill=black, inner sep=0pt, minimum width=1.5pt] (a) at (1.75, 1.25) {};
				\node[circle, draw, fill=black, inner sep=0pt, minimum width=1.5pt] (b) at (1.5, 1.25) {};
				\node[circle, draw, fill=black, inner sep=0pt, minimum width=1.5pt] (c) at (2, 1.25) {};

				\draw (a) to (c);
				\draw(b) to [bend left=80] (c);

				\node[circle, draw, fill=black, inner sep=0pt, minimum width=1.5pt] (k) at (1.75, .5) {};
				\node[circle, draw, fill=black, inner sep=0pt, minimum width=1.5pt] (l) at (1.5, .5) {};
				\node[circle, draw, fill=black, inner sep=0pt, minimum width=1.5pt] (m) at (2, .5) {};
				
				\draw (k) to (m); 
				
				\node[circle, draw, fill=black, inner sep=0pt, minimum width=1.5pt] (d) at (1.75, -1.25) {};
				\node[circle, draw, fill=black, inner sep=0pt, minimum width=1.5pt] (e) at (1.5, -1.25) {};
				\node[circle, draw, fill=black, inner sep=0pt, minimum width=1.5pt] (f) at (2, -1.25) {};
				
				\draw (d) to (e);
				\draw(e) to [bend left=-80] (f);

				\node[circle, draw, fill=black, inner sep=0pt, minimum width=1.5pt] (h) at (1.75, -.5) {};
				\node[circle, draw, fill=black, inner sep=0pt, minimum width=1.5pt] (i) at (1.5, -.5) {};
				\node[circle, draw, fill=black, inner sep=0pt, minimum width=1.5pt] (j) at (2, -.5) {};
				
				\draw (h) to (i);

				\node[circle, draw, fill=black, inner sep=0pt, minimum width=1.5pt] (n) at (4.25, -1.15) {};
				\node[circle, draw, fill=black, inner sep=0pt, minimum width=1.5pt] (o) at (4, -1.15) {};
				\node[circle, draw, fill=black, inner sep=0pt, minimum width=1.5pt] (p) at (4.5, -1.15) {};
				
				\draw 
				(n) to (p)
				(n) to (o);
				\draw(o) to [bend left=-80] (p);

				\node[circle, draw, fill=black, inner sep=0pt, minimum width=1.5pt] (q) at (-.75, -1.15) {};
				\node[circle, draw, fill=black, inner sep=0pt, minimum width=1.5pt] (r) at (-.5, -1.15) {};
				\node[circle, draw, fill=black, inner sep=0pt, minimum width=1.5pt] (s) at (-1, -1.15) {};
				
				\draw(r) to [bend left=80] (s);

				\node[circle, draw, fill=black, inner sep=0pt, minimum width=1.5pt] (t) at (-.75, 1.15) {};
				\node[circle, draw, fill=black, inner sep=0pt, minimum width=1.5pt] (u) at (-.5, 1.15) {};
				\node[circle, draw, fill=black, inner sep=0pt, minimum width=1.5pt] (v) at (-1, 1.15) {};

				\node[circle, draw, fill=black, inner sep=0pt, minimum width=1.5pt] (w) at (4.25, 1.15) {};
				\node[circle, draw, fill=black, inner sep=0pt, minimum width=1.5pt] (x) at (4, 1.15) {};
				\node[circle, draw, fill=black, inner sep=0pt, minimum width=1.5pt] (y) at (4.5, 1.15) {};
				
				\draw (w) to (x)
				(w) to (y);
				
			\end{tikzpicture}  
			\caption{The arc digraph for labeled graphs on $[3]$.}
			\label{fig:arcdigraph}
		\end{figure}

		If we compress the twins along the $2$-cycle and one of the loops, notice that the resulting graph is still Eulerian. An Euler tour of this partially compressed arc digraph corresponds to a gupcycle for labeled graphs on $[3]$ containing three diamond edges. The gupcycle requires only $5$ vertices to account for all labeled graphs on $[3]$ exactly once.
		
		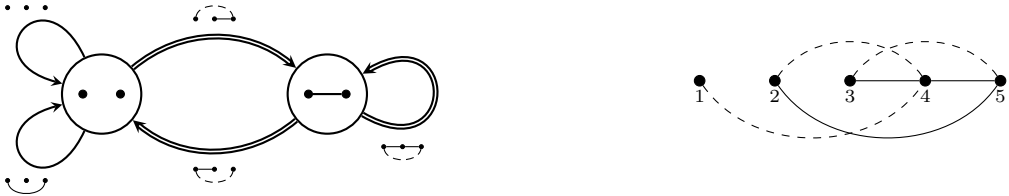
\begin{figure}[h!]
			\centering
			\begin{multicols}{2}
				\tikzstyle{every node}=[circle, draw, fill=black, inner sep=0pt, minimum width=3pt]
				\begin{tikzpicture}[]
					
					\node[circle, draw, thick, fill=white, inner sep=0pt, minimum width=30pt] (0) at (.25,0) {};
					\node[circle, draw, thick, fill=white, inner sep=0pt, minimum width=30pt] (00) at (3.25,0) {};

					\draw[->, -stealth, thick, double] (0) to [bend right=-40] (00);
					\draw[->, -stealth, thick, double, bend left=40]
					(00) to (0);

					\draw[thick, ->, -stealth] (0) to [out=115,in=165,looseness=7] (0);
					\draw[thick, ->, -stealth] (0) to [out=245,in=195,looseness=7] (0);
					
					\draw[thick, ->, -stealth,double] (00) to [out=-30,in=30,looseness=7] (00);
					
					\node (1) at (0,0) {};
					\node (2) at (.5,0) {};
					\node (3) at (3, 0) {};
					\node (4) at (3.5,0) {};
					
					\draw[thick]
					(3) to (4);

					\node[circle, draw, fill=black, inner sep=0pt, minimum width=1.5pt] (a) at (1.75, 1) {};
					\node[circle, draw, fill=black, inner sep=0pt, minimum width=1.5pt] (b) at (1.5, 1) {};
					\node[circle, draw, fill=black, inner sep=0pt, minimum width=1.5pt] (c) at (2, 1) {};

					\draw (a) to (c);
					\draw[densely dashed] (b) to [bend left=80] (c);

					\node[circle, draw, fill=black, inner sep=0pt, minimum width=1.5pt] (d) at (1.75, -1) {};
					\node[circle, draw, fill=black, inner sep=0pt, minimum width=1.5pt] (e) at (1.5, -1) {};
					\node[circle, draw, fill=black, inner sep=0pt, minimum width=1.5pt] (f) at (2, -1) {};
					
					\draw (d) to (e);
					\draw[densely dashed] (e) to [bend left=-80] (f);

					\node[circle, draw, fill=black, inner sep=0pt, minimum width=1.5pt] (n) at (4.25, -0.7) {};
					\node[circle, draw, fill=black, inner sep=0pt, minimum width=1.5pt] (o) at (4, -0.7) {};
					\node[circle, draw, fill=black, inner sep=0pt, minimum width=1.5pt] (p) at (4.5, -0.7) {};
					
					\draw 
					(n) to (p)
					(n) to (o);
					\draw[densely dashed] (o) to [bend left=-80] (p);

					\node[circle, draw, fill=black, inner sep=0pt, minimum width=1.5pt] (q) at (-.75, -1.15) {};
					\node[circle, draw, fill=black, inner sep=0pt, minimum width=1.5pt] (r) at (-.5, -1.15) {};
					\node[circle, draw, fill=black, inner sep=0pt, minimum width=1.5pt] (s) at (-1, -1.15) {};
					
					\draw(r) to [bend left=80] (s);

					\node[circle, draw, fill=black, inner sep=0pt, minimum width=1.5pt] (t) at (-.75, 1.15) {};
					\node[circle, draw, fill=black, inner sep=0pt, minimum width=1.5pt] (u) at (-.5, 1.15) {};
					\node[circle, draw, fill=black, inner sep=0pt, minimum width=1.5pt] (v) at (-1, 1.15) {};
				\end{tikzpicture}
				
				\columnbreak
				\null \vfill
				\tikzstyle{every node}=[circle, draw, fill=black, inner sep=0pt, minimum width=4pt]
				\begin{tikzpicture}
					\node[label={[shift={(0,-0.4)}]\scriptsize{1}}] (1) {};
					\node[label={[shift={(0,-0.4)}]\scriptsize{2}}] (2) [right of=1] {};
					\node[label={[shift={(0,-0.4)}]\scriptsize{3}}] (3) [right of=2] {};
					\node[label={[shift={(0,-0.4)}]\scriptsize{4}}] (4) [right of=3] {};
					\node[label={[shift={(0,-0.4)}]\scriptsize{5}}] (5) [right of=4] {};
					
					\draw[]
					(3) to (4)
					(4) to (5);
					\draw[dashed, bend right=55]
					(4) to (2)
					(5) to (3);
					\draw[dashed, bend right=55]
					(1) to (4);
					\draw[bend right=55]
					(2) to (5);
				\end{tikzpicture}
				\vfill \null
			\end{multicols}
			\caption{Partially compressed arc digraph (left) and corresponding gupcycle (right) for labeled graphs on $[3]$.}
			\label{fig:my_label}
		\end{figure}
	\end{example}
	
	\begin{theorem}\label{thm:tourcollection}
		For $n \ge 3$, if $t$ is the total length of a collection of edge-disjoint tours in $D_n^*$ (or, equivalently, any balanced subgraph of $D_n^*$), then we can construct a gupcycle for labeled graphs on $[n]$ of length $2^{\binom{n}{2}} - t$ by compression of twins.
	\end{theorem}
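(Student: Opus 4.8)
The plan is to realize the desired gupcycle as an Euler tour of a \emph{partially} compressed arc digraph, compressing exactly the twin pairs indicated by the given tours. First I would pass to the equivalent formulation stated in the theorem: a collection of edge-disjoint tours in $D_n^*$ is the same data as a balanced subgraph $B \subseteq D_n^*$ with $t$ edges, since a balanced digraph decomposes into edge-disjoint directed cycles (each a tour) and, conversely, a union of edge-disjoint tours is balanced; the total length is $t$ in either description. Each edge of $B$ is a diamond edge of $D_n^*$, and by Observations~\ref{obs:twinedge} and~\ref{obs:twinexistence} it corresponds to a unique twin pair of labeled graphs in $D_n$. Let $D$ be the partially compressed arc digraph obtained from $D_n$ by compressing precisely these $t$ twin pairs. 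Since compressing $t$ twin pairs replaces $2t$ edges by $t$ diamond edges, $D$ has $2^{\binom{n}{2}} - t$ edges.

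The crux is to show $D$ is Eulerian. Recall that $D_n$ is balanced with $\deg^+(w) = \deg^-(w) = 2^{n-1}$ at every vertex, and (because gucycles for labeled graphs exist for $n \ge 3$) strongly connected. Compressing a single twin pair, whose two edges both run from some $u$ to some $v$, lowers $\deg^+(u)$ and $\deg^-(v)$ each by $1$ and changes no other degree. Hence compressing the pairs matching the edges of $B$ lowers $\deg^+(w)$ by exactly the out-degree of $w$ in $B$ and lowers $\deg^-(w)$ by exactly the in-degree of $w$ in $B$, for every vertex $w$. Because $B$ is balanced these two quantities agree, so $D$ remains balanced. Strong connectivity is preserved because each compression leaves a diamond edge from $u$ to $v$ in place, so every ordered pair of vertices joined by an edge in $D_n$ is still joined by an edge in $D$; thus $D$ inherits strong connectivity from $D_n$. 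By the proposition that every balanced, strongly connected digraph is Eulerian, $D$ has an Euler tour.

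Finally I would turn an Euler tour of $D$ into the gupcycle. Consecutive edges of the tour meet at a common vertex of $D$, so as labeled partial graphs they overlap by $n-1$; here I would note that the single diamond $\{1,n\}$ of a compressed edge lies outside both of the relevant $(n-1)$-windows, so it never obstructs an overlap. Gluing the $2^{\binom{n}{2}} - t$ edges of the tour in order (Lemma~\ref{lem:gluing}) and then closing up the ends (Lemma~\ref{lem:cyclicgluing}) yields a cyclically ordered partial graph whose length equals the number of edges in the tour, namely $2^{\binom{n}{2}} - t$, and whose $n$-windows are exactly the edges of the tour. Each uncompressed edge covers one labeled graph and each diamond edge covers its two twins, so the windows cover all $2^{\binom{n}{2}}$ labeled graphs on $[n]$, each exactly once, giving the required gupcycle. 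The step I expect to be the main obstacle is the balance-preservation argument---seeing that compressing precisely a balanced set of twin pairs keeps the digraph Eulerian---together with the mild technical check that the cyclic gluing of Lemma~\ref{lem:cyclicgluing} applies, i.e.\ that the resulting length $2^{\binom{n}{2}} - t \ge 2^{\binom{n}{2}-1}$ exceeds $2(n-1)$; this holds for all $n \ge 4$ and for $n=3$ except the extremal case $t = 2^{2}$, which can be verified directly.
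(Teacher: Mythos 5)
Your proposal is correct and follows essentially the same route as the paper: compress exactly the twin pairs corresponding to the edges of the given tours, argue the partially compressed arc digraph stays balanced (since the tour collection is balanced) and strongly connected (since a diamond edge survives each compression), extract an Euler tour, and glue via Lemmas~\ref{lem:gluing} and~\ref{lem:cyclicgluing}, with the same exceptional case $n=3$, $t=4$ handled separately. Your degree-counting justification of balance is a slightly more explicit version of the paper's one-line claim, but the argument is the same.
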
 
	
	\begin{proof}
		Suppose $\mathcal{T}$ is a collection of edge-disjoint tours in the fully compressed arc digraph $D_n^*$ (or, equivalently, any balanced subgraph of $D_n^*$). Let $\mathcal{E}(\mathcal{T})$ be the set of edges in $\mathcal{T}$. Let $t = \abs{\mathcal{E}(\mathcal{T})}$. We construct the desired gupcycle as follows.
		
		In $D_n$, compress the pairs of twins that are edges in $\mathcal{E}(\mathcal{T})$. Call the resulting digraph $D_{\mathcal{T}}$. Then $D_{\mathcal{T}}$ is strongly connected because compressing twins cannot reduce connectivity, and $D_n$ is strongly connected. The digraph $D_{\mathcal{T}}$ is balanced because $D_n$ and $\mathcal{T}$ are balanced. Therefore, $D_{\mathcal{T}}$ is Eulerian.
		
		By Lemmas \ref{lem:gluing} and \ref{lem:cyclicgluing}, an Euler tour of $D_{\mathcal{T}}$ of length greater than $2(n-1)$ corresponds to a gupcycle of labeled graphs on $[n]$ where the edges of $D_{\mathcal{T}}$ that were compressed are labeled graphs on $[n]$ that have diamond edges at distance $n-1$ to represent both twins in the same $n$-window. This gupcycle has length equal to the number of edges in $D_{\mathcal{T}}$, which is $2^{\binom{n}{2}} - t$. The length condition of Lemma \ref{lem:cyclicgluing}, here equivalent to $t < 2^{\binom{n}{2}} - 2(n-1)$, is satisfied for all $n \ge 4$ because $t \le e(D_n^*) = 2^{\binom{n}{2}-1}$. For $n = 3$ it is satisfied for all $t \le 3$. The only remaining case is $n=3$ and $t=4$, and this cyclic gluing can also be done: Figure \ref{fig:smallestgupcycle} shows the gupcycle representing the unique Euler tour of $D_3^*$.
	\end{proof}
	
	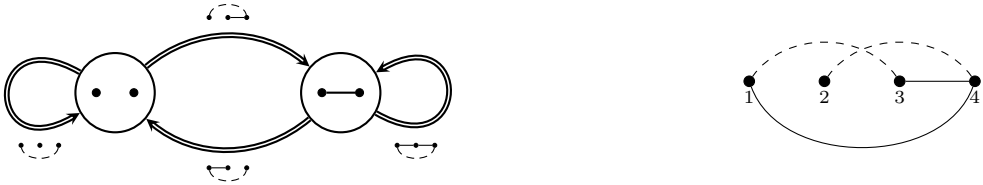
\begin{figure}[h!]
		\centering
		\begin{multicols}{2}
			\tikzstyle{every node}=[circle, draw, fill=black, inner sep=0pt, minimum width=3pt]
			\begin{tikzpicture}[]
				
				\node[circle, draw, thick, fill=white, inner sep=0pt, minimum width=30pt] (0) at (.25,0) {};
				\node[circle, draw, thick, fill=white, inner sep=0pt, minimum width=30pt] (00) at (3.25,0) {};

				\draw[->, -stealth, thick, double] (0) to [bend right=-40] (00);
				\draw[->, -stealth, thick, double, bend left=40]
				(00) to (0);

				\draw[thick, ->, -stealth,double] (0) to [out=150,in=210,looseness=7] (0);
				\draw[thick, ->, -stealth,double] (00) to [out=-30,in=30,looseness=7] (00);
				
				\node (1) at (0,0) {};
				\node (2) at (.5,0) {};
				\node (3) at (3, 0) {};
				\node (4) at (3.5,0) {};
				
				\draw[thick]
				(3) to (4);

				\node[circle, draw, fill=black, inner sep=0pt, minimum width=1.5pt] (a) at (1.75, 1) {};
				\node[circle, draw, fill=black, inner sep=0pt, minimum width=1.5pt] (b) at (1.5, 1) {};
				\node[circle, draw, fill=black, inner sep=0pt, minimum width=1.5pt] (c) at (2, 1) {};

				\draw (a) to (c);
				\draw[densely dashed] (b) to [bend left=80] (c);

				\node[circle, draw, fill=black, inner sep=0pt, minimum width=1.5pt] (d) at (1.75, -1) {};
				\node[circle, draw, fill=black, inner sep=0pt, minimum width=1.5pt] (e) at (1.5, -1) {};
				\node[circle, draw, fill=black, inner sep=0pt, minimum width=1.5pt] (f) at (2, -1) {};
				
				\draw (d) to (e);
				\draw[densely dashed] (e) to [bend left=-80] (f);

				\node[circle, draw, fill=black, inner sep=0pt, minimum width=1.5pt] (n) at (4.25, -0.7) {};
				\node[circle, draw, fill=black, inner sep=0pt, minimum width=1.5pt] (o) at (4, -0.7) {};
				\node[circle, draw, fill=black, inner sep=0pt, minimum width=1.5pt] (p) at (4.5, -0.7) {};
				
				\draw 
				(n) to (p)
				(n) to (o);
				\draw[densely dashed] (o) to [bend left=-80] (p);

				\node[circle, draw, fill=black, inner sep=0pt, minimum width=1.5pt] (q) at (-.75, -0.7) {};
				\node[circle, draw, fill=black, inner sep=0pt, minimum width=1.5pt] (r) at (-.5, -0.7) {};
				\node[circle, draw, fill=black, inner sep=0pt, minimum width=1.5pt] (s) at (-1, -0.7) {};
				
				\draw[densely dashed](r) to [bend left=80] (s);
			\end{tikzpicture}
			
			\columnbreak
			\null \vfill
			\tikzstyle{every node}=[circle, draw, fill=black, inner sep=0pt, minimum width=4pt]
			\begin{tikzpicture}
				\node[label={[shift={(0,-0.4)}]\scriptsize{1}}] (1) {};
				\node[label={[shift={(0,-0.4)}]\scriptsize{2}}] (2) [right of=1] {};
				\node[label={[shift={(0,-0.4)}]\scriptsize{3}}] (3) [right of=2] {};
				\node[label={[shift={(0,-0.4)}]\scriptsize{4}}] (4) [right of=3] {};
				
				\draw[]
				(3) to (4);
				\draw[dashed, bend right=55]
				(3) to (1)
				(4) to (2);
				\draw[bend right=70]
				(1) to (4);
			\end{tikzpicture}
			\vfill \null
		\end{multicols}
		\caption{Fully compressed arc digraph (left) and corresponding gupcycle (right) for labeled graphs on $[3]$.}
		\label{fig:smallestgupcycle}
	\end{figure}
	
	For every $n$, we have a gupcycle of length $2^{\binom{n}{2}-1}$, corresponding to an Euler tour of $D_n^*$, and we have a gucycle of length $2^{\binom{n}{2}}$, corresponding to an Euler tour of $D_n$. Theorem \ref{thm:tourcollection} enables us to find gupcycles of many different lengths in between these extremes. It would be interesting to know if all lengths between $2^{\binom{n}{2}-1}$ and $2^{\binom{n}{2}}$ are possible. To determine possible gupcycle lengths, we will count the loops and the tours of length $2$ in $D_n^*$, as collections of them are always edge-disjoint.

	\begin{lemma}\label{lem:loopcount}
		For $n \ge 2$, the fully compressed arc digraph $D_n^*$ for labeled graphs on $[n]$ contains exactly $2^{n-2}$ loops.
	\end{lemma}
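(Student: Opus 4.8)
The plan is to characterize the loops of $D_n^*$ explicitly and then count them. Recall that each edge of $D_n^*$ is a compressed pair of twins, represented by a labeled partial graph $g$ on $[n]$ whose only diamond edge is $\set{1,n}$, and that $g$ is a loop exactly when its head and tail coincide, i.e.\ when $g[1,2,\ldots,n-1] \oiso g[2,3,\ldots,n]$.

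First I would unpack this loop condition. Since neither $g[1,\ldots,n-1]$ nor $g[2,\ldots,n]$ contains the pair $\set{1,n}$, both windows are ordinary ordered graphs (no diamond edges), so the isomorphism $g[1,\ldots,n-1] \oiso g[2,\ldots,n]$ says precisely that $\set{i,j} \in E(g)$ if and only if $\set{i+1,j+1} \in E(g)$ for every $1 \le i < j \le n-1$. In other words, on pairs whose larger endpoint is at most $n-1$, the edge/non-edge status depends only on the difference $j-i$.

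Next I would sort the pairs in $\binom{[n]}{2}$ by their difference $d = j-i$. For each $d$ with $1 \le d \le n-2$, the shift condition links the consecutive pairs $\set{1,1+d}, \set{2,2+d}, \ldots, \set{n-d,n}$ into one class that must be either entirely edges or entirely non-edges, contributing a factor of two. The only remaining difference, $d = n-1$, consists of the single pair $\set{1,n}$, which is forced to be the diamond edge and so contributes no choice. Multiplying the independent binary choices over $d = 1, \ldots, n-2$ yields exactly $2^{n-2}$ loops.

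The step that most needs care is confirming that the shift condition collapses each difference class $d \le n-2$ completely (consecutive pairs of the same difference are indeed chained, so the class is uniform) while leaving the boundary pair $\set{1,n}$ entirely unconstrained---this last fact is exactly why $\set{1,n}$ becomes the diamond edge under compression. An equivalent and perhaps more conceptual route first counts the loops of $D_n$: the same difference analysis applies, but now $\set{1,n}$ is free to be an edge or a non-edge, giving $2^{n-1}$ loops. Because the loop condition does not involve $\set{1,n}$ at all, the twin of a loop is again a loop, so these $2^{n-1}$ loops pair off into $2^{n-2}$ twin pairs by Observations \ref{obs:twinedge} and \ref{obs:twinexistence}; each pair compresses to a single loop of $D_n^*$, recovering the count $2^{n-2}$.
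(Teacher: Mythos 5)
Your proof is correct and is essentially the paper's argument in a different parametrization: the paper observes that the shift condition forces the whole loop to be determined by the neighborhood of vertex $1$ among $\set{2,\ldots,n-1}$, which is exactly the same set of $n-2$ independent binary choices as your difference classes $d=1,\ldots,n-2$ (the class $d$ is an edge class iff $1+d$ is a neighbor of $1$). Your closing alternative, counting the $2^{n-1}$ loops of $D_n$ and pairing them into twins, is also valid but is just a repackaging of the same count.
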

	
	\begin{proof}
		By definition $L$ is a loop in $D_n^*$ if and only if $L[1,2,\ldots,n-1] \oiso L[2,3,\ldots,n]$ and $L$ has diamond edge set $\set{\set{1,n}}$. The neighborhood of $1$ determines the neighborhood of $2$, which determines the neighborhood of $3$, and so on. By induction $L$ is determined by the neighborhood of $1$. Vertex $1$ in $L$ may be adjacent to any of the $n-2$ vertices in $\set{2,3,\ldots,n-1}$, so it has $2^{n-2}$ possible neighborhoods. 
		Thus, there are $2^{n-2}$ loops in the fully compressed arc digraph $D_n^*$.
	\end{proof}
	
	\begin{lemma}\label{lem:2tourcount} 
		For $n \ge 2$, the fully compressed arc digraph $D_n^*$ for labeled graphs on $[n]$ contains exactly $2^{2n-5} - 2^{n-3}$ tours of length $2$.
	\end{lemma}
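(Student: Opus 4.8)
The plan is to identify the tours of length $2$ in $D_n^*$ with the directed $2$-cycles between distinct vertices, and then to count these by a direct parametrization of the two graphs involved. First I would observe that in $D_n^*$ there is at most one edge between any ordered pair of vertices: the only parallel edges of $D_n$ are twins (Observation~\ref{obs:twinexistence}), and compression collapses each twin pair to a single edge. Consequently a closed walk of length $2$ with no repeated edge must be a genuine $2$-cycle $u \to v \to u$ with $u \ne v$; it uses the unique edge $u\to v$ together with the unique edge $v\to u$. So it suffices to count unordered pairs $\set{u,v}$ of distinct labeled graphs on $[n-1]$ for which both edges $u \to v$ and $v \to u$ exist in $D_n^*$.

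Next I would translate the existence of these edges into overlap conditions. By the definition of the arc digraph, an edge $u\to v$ exists exactly when $u[2,\ldots,n-1]\oiso v[1,\ldots,n-2]$, that is, $\set{i,j}\in E(u) \iff \set{i-1,j-1}\in E(v)$ for all $2 \le i<j\le n-1$; and symmetrically for $v\to u$. I would then count ordered pairs $(u,v)$ satisfying both conditions and divide by $2$ at the end, since $(u,v)$ and $(v,u)$ describe the same $2$-cycle. The key step is to analyze the two shift conditions one difference class at a time: for a fixed difference $d\in\set{1,\ldots,n-2}$, writing $x_k$ for the indicator of the pair $\set{k,k+d}$ in $u$ and $y_k$ for the corresponding indicator in $v$, the two conditions read $x_k=y_{k-1}$ and $y_k=x_{k-1}$. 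Iterating these, I expect both sequences to become completely determined by the single pair $(x_1,y_1)$, producing the alternating patterns $x_1,y_1,x_1,y_1,\dots$ and $y_1,x_1,y_1,x_1,\dots$. Thus each difference class contributes exactly $2$ free bits, so there are $2^{2(n-2)}=2^{2n-4}$ ordered pairs $(u,v)$ obeying both overlap conditions.

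Finally I would remove the degenerate pairs and account for the overcounting. The pairs with $u=v$ are precisely those with $x_1=y_1$ in every difference class, of which there are $2^{n-2}$; these correspond to the $2^{n-2}$ loops of Lemma~\ref{lem:loopcount} and do not give length-$2$ tours, since the two required edges coincide. Subtracting them leaves $2^{2n-4}-2^{n-2}$ ordered pairs with $u\ne v$, and dividing by $2$ yields $\tfrac12\bigl(2^{2n-4}-2^{n-2}\bigr)=2^{2n-5}-2^{n-3}$ tours of length $2$. The main obstacle is the middle step: setting up the two shift conditions with the correct relabeling and verifying that they interlock to force the alternating pattern (hence exactly two free bits per difference class), while keeping the $u=v$ bookkeeping and the final factor of $2$ straight. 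A small check at $n=3$, where the formula predicts a single $2$-cycle between the two vertices of $D_3^*$, confirms the count.
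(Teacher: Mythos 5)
Your argument is correct and takes essentially the same route as the paper's proof: both count ordered pairs $(u,v)$ satisfying the two overlap conditions, obtain $2^{2n-4}$ of them because those conditions force a period-two propagation leaving two free bits per difference class (which is exactly the paper's observation that the neighborhoods of vertices $1$ and $2$ of the glued partial graph $J$ determine the neighborhoods of all the odd and even vertices), and then subtract the $2^{n-2}$ diagonal pairs and divide by $2$. Your preliminary observation that $D_n^*$ has no parallel edges, so that length-$2$ tours are precisely $2$-cycles on distinct vertices, is a point the paper leaves implicit, and your argument handles it cleanly.
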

	
	\begin{proof}For $n=2$ there are no tours of length $2$ since there is only one vertex in $D_2^*$. Suppose $n \ge 3$. The tours of length $2$ in $D_n^*$ are exactly the unordered pairs $\set{H,K} \in \binom{V(D_n^*)}{2}$ such that 
		\begin{align*}
			K[1,2,\ldots,n-2] &\oiso H[2,3,\ldots,n-1]\quad\text{and}\\
			H[1,2,\ldots,n-2] &\oiso K[2,3,\ldots,n-1].
		\end{align*}
		
		To count these unordered pairs of distinct labeled graphs, we first count the ordered pairs $(H,K)$ such that $(H,K)$ and $(K,H)$ both are edges of $D_n^*$, and we will include the case $H=K$. Since $(H,K)$ overlaps, there is a unique labeled partial graph $J$ on $[n]$ which is the gluing of $(H,K)$ plus the diamond edge $\set{1,n}$. The labeled partial graph $J$ is the edge of $D_n^*$ from $H$ to $K$ and satisfies
		\begin{align*}
			J[1, 2, \dots, n-1] &\oiso H\quad\text{and}\\
			J[2, 3, \dots, n] &\oiso K.
		\end{align*}
		Since $D_n^*$ also has an edge from $K$ to $H$, the pair $(K,H)$ overlaps, so $J$ must also satisfy
		\[
		J[1,2,\ldots,n-2] \oiso H[1,2,\ldots,n-2] \oiso K[2,3,\ldots,n-1] \oiso J[3,4,\ldots,n]. \\
		\]
		In other words, we are counting the labeled partial graphs $J$ on $[n]$ with $E_{\diam}(J) = \set{\set{1,n}}$ such that $(J,J)$ overlaps by $n-2$.
		
		We will show that there are $2^{2n-4}$ such partial graphs $J$. Vertex $1$ in $J$ may be adjacent to any of the $n-2$ vertices in $\set{2,3,\ldots,n-1}$ (recall $\set{1,n}$ must be a diamond edge), so it has $2^{n-2}$ possible neighborhoods. Given any such neighborhood for vertex $1$, there are $2^{n-2}$ possible neighborhoods for vertex $2$, corresponding to its neighbors among the vertices in  $\set{3,4,\ldots,n}$. Since $J[1,2,\ldots,n-2] \oiso J[3,4,\ldots,n]$, the neighborhoods of vertices $1$ and $2$ determine the neighborhoods of all the odd and even vertices of $J$, respectively. Therefore there are $2^{n-2}\cdot 2^{n-2} = 2^{2n-4}$ such partial graphs $J$, each representing an ordered pair $(H,K)$ such that there is a walk of length $2$ between vertices $H$ and $K$.
		
		In this count of $2^{2n-4}$ ordered pairs $(H,K)$, we included pairs in which $H=K$, of which there are $2^{n-2}$ by Lemma \ref{lem:loopcount}. Thus there are $2^{2n-4}-2^{n-2}$ ordered pairs $(H,K)$ with $H\ne K$. Each such pair was counted twice, once for each ordering, so there are $2^{2n-5}-2^{n-3}$ unordered pairs $\set{H,K}$.
	\end{proof}

	\begin{theorem}\label{thm:guplengths}
		For every $n \ge 3$, for every $\ell$ in $2^{\binom{n}{2}-1} \le \ell \le 2^{\binom{n}{2}-1} + 2^{2n-4}$ or $2^{\binom{n}{2}} - 2^{2n-4}\le \ell \le 2^{\binom{n}{2}}$, we can construct a gupcycle of length $\ell$ for labeled graphs on $[n]$ by compression of twins.
	\end{theorem}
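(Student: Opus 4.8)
The plan is to reduce both length ranges to Theorem \ref{thm:tourcollection}, which turns a balanced subgraph of $D_n^*$ of total size $t$ into a gupcycle of length $2^{\binom{n}{2}}-t$. Writing $\ell = 2^{\binom{n}{2}}-t$, the target interval $2^{\binom{n}{2}}-2^{2n-4}\le \ell \le 2^{\binom{n}{2}}$ corresponds exactly to $0 \le t \le 2^{2n-4}$, while $2^{\binom{n}{2}-1}\le \ell \le 2^{\binom{n}{2}-1}+2^{2n-4}$ corresponds to $2^{\binom{n}{2}-1}-2^{2n-4}\le t \le 2^{\binom{n}{2}-1}$. Since $D_n^*$ has exactly $2^{\binom{n}{2}-1}$ edges, this second range of $t$ consists precisely of the sizes of \emph{complements} in $D_n^*$ of subgraphs of size at most $2^{2n-4}$. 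So it suffices to produce, for every integer $j$ with $0 \le j \le 2^{2n-4}$, a balanced subgraph $S$ of $D_n^*$ of size $j$: applying Theorem \ref{thm:tourcollection} to $S$ yields the first range, and applying it to the complementary balanced subgraph $D_n^*\setminus S$ (of size $2^{\binom{n}{2}-1}-j$) yields the second.

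For the building blocks I would use the loops and the tours of length $2$ counted in Lemmas \ref{lem:loopcount} and \ref{lem:2tourcount}. First I would check that all these tours are mutually edge-disjoint: $D_n^*$ has no parallel edges, since an edge from $H$ to $K$ is the unique gluing of $(H,K)$ with $\{1,n\}$ a diamond edge, so distinct $2$-tours share no edge; the $2^{n-2}$ loops sit at distinct vertices; and a loop edge $H\to H$ cannot coincide with an edge $H'\to K'$ of a $2$-tour because $H'\ne K'$. Hence any subfamily of loops and $2$-tours is automatically a collection of edge-disjoint tours, i.e.\ a balanced subgraph of $D_n^*$, to which Theorem \ref{thm:tourcollection} applies. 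Their total size is $2^{n-2}\cdot 1 + (2^{2n-5}-2^{n-3})\cdot 2 = 2^{2n-4}$, exactly the length of each target interval.

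It then remains to show that every size $j\in\{0,1,\dots,2^{2n-4}\}$ is realized by some subfamily $S$. This is a subset-sum question with $2^{n-2}$ pieces of size $1$ (the loops) and $2^{2n-5}-2^{n-3}$ pieces of size $2$ (the $2$-tours). Because there is at least one size-$1$ piece (indeed $2^{n-2}\ge 1$ for $n\ge 3$), any target $j$ is met by taking as many $2$-tours as possible and then adjusting parity with a single loop; the elementary check that this leaves no gaps up to $2^{2n-4}$ completes the construction. For each such $S$, Theorem \ref{thm:tourcollection} applied to $S$ gives a gupcycle of length $2^{\binom{n}{2}}-j$, and applied to $D_n^*\setminus S$ gives one of length $2^{\binom{n}{2}-1}+j$.

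The step needing the most care is confirming that Theorem \ref{thm:tourcollection} genuinely applies at the extreme of the second interval. When $j=0$ we compress all of $D_n^*$, giving $t=2^{\binom{n}{2}-1}$ and length exactly $2^{\binom{n}{2}-1}$; for $n=3$ this is length $4$, which \emph{meets} rather than exceeds the threshold $2(n-1)$ of the Cyclic Gluing Lemma (Lemma \ref{lem:cyclicgluing}). This is precisely the special case $n=3$, $t=4$ already disposed of inside the proof of Theorem \ref{thm:tourcollection} (Figure \ref{fig:smallestgupcycle}), so no new argument is required; for all $n\ge 4$ the shortest length $2^{\binom{n}{2}-1}$ comfortably exceeds $2(n-1)$, and every intermediate length is covered.
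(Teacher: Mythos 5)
Your proposal is correct and follows essentially the same route as the paper: reduce to Theorem \ref{thm:tourcollection}, realize every total length $0\le t\le 2^{2n-4}$ by an edge-disjoint family of loops and $2$-tours (using Lemmas \ref{lem:loopcount} and \ref{lem:2tourcount}), and obtain the other interval by taking the complementary balanced subgraph of $D_n^*$. Your explicit check that loops and $2$-tours are automatically edge-disjoint, and that the boundary case $n=3$, $t=4$ is already covered inside Theorem \ref{thm:tourcollection}, are details the paper asserts more briefly but nothing of substance differs.
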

	
	\begin{proof}Let $t = 2^{\binom{n}{2}}-\ell$. We will show how to construct a gupcycle of length $\ell = 2^{\binom{n}{2}}-t$, reducing the length of a gucycle by $t$, for every $t$ in $0 \le t \le 2^{2n-4}$ or $2^{\binom{n}{2}-1} - 2^{2n-4}\le t \le 2^{\binom{n}{2}-1}$.
		
		Suppose $0 \le t \le 2^{2n-4}$. If $t \le 2^{n-2}$, then by Lemma \ref{lem:loopcount} let $\mathcal{T}$ be a collection of $t$ loops in $D_n^*$. Otherwise, $2^{n-2} +1 \le t \le 2^{2n-4}$. If $t$ is even, let $\mathcal{T}$ be a collection of $\frac{t}{2} - 2^{n-3}$ tours of length $2$ (by Lemma \ref{lem:2tourcount}) and all $2^{n-2}$ loops in $D_n^*$, so $\mathcal{T}$ has total length $2(\frac{t}{2} - 2^{n-3})+2^{n-2}=t$. If $t$ is odd, then let $\mathcal{T}$ be a collection of $\frac{t+1}{2} - 2^{n-3}$ tours of length $2$ and $2^{n-2}-1$ loops, so $\mathcal{T}$ has total length $2(\frac{t+1}{2} - 2^{n-3})+2^{n-2}-1=t$. We have shown that for all $0 \le t \le 2^{2n-4}$, there is a collection of edge-disjoint tours in $D_n^*$ of total length $t$, so Theorem \ref{thm:tourcollection} guarantees the existence of a gupcycle of length $2^{\binom{n}{2}}-t$.
		
		If $2^{\binom{n}{2}-1} - 2^{2n-4}\le t \le 2^{\binom{n}{2}-1}$, then let $s = 2^{\binom{n}{2}-1}-t$, so $0 \le s \le 2^{2n-4}$. Let $\mathcal{S}$ be a collection of edge-disjoint tours in $D_n^*$ of total length $s$, guaranteed by the previous paragraph. Let $\mathcal{T} = E(D_n^*)\setminus \mathcal{E}(\mathcal{S})$, the set of edges of $D_n^*$ that are not in $\mathcal{S}$. Since $D_n^*$ and $\mathcal{S}$ are balanced, $\mathcal{T}$ represents a balanced subgraph of $D_n^*$, and it contains $t$ edges. By Theorem \ref{thm:tourcollection}, there is a gupcycle of length $2^{\binom{n}{2}}-t$.
	\end{proof}
	
	For $n=3$ and $n=4$, Theorem \ref{thm:guplengths} provides gupcycles of all possible lengths using compression of twins.
	
	\begin{corollary}
		For every $4 \le \ell \le 8$, we can construct a gupcycle of length $\ell$ for labeled graphs on $[3]$. For every $32 \le \ell \le 64$, we can construct a gupcycle of length $\ell$ for labeled graphs on $[4]$.
	\end{corollary}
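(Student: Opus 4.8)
The plan is to derive both statements directly from Theorem \ref{thm:guplengths} by specializing to $n=3$ and $n=4$ and checking that the two intervals of achievable lengths furnished by that theorem together exhaust the full range $2^{\binom{n}{2}-1} \le \ell \le 2^{\binom{n}{2}}$. Theorem \ref{thm:guplengths} gives gupcycles of every length in the first interval $2^{\binom{n}{2}-1} \le \ell \le 2^{\binom{n}{2}-1} + 2^{2n-4}$ and of every length in the second interval $2^{\binom{n}{2}} - 2^{2n-4}\le \ell \le 2^{\binom{n}{2}}$, so the only thing left to verify is that these two intervals overlap (or at least abut) for the two values of $n$ in question.

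First I would record the relevant quantities. For $n=3$ we have $\binom{3}{2}=3$, hence $2^{\binom{n}{2}-1}=4$, $2^{\binom{n}{2}}=8$, and $2^{2n-4}=4$; both intervals then equal $[4,8]$, so every length $4 \le \ell \le 8$ is realized. For $n=4$ we have $\binom{4}{2}=6$, hence $2^{\binom{n}{2}-1}=32$, $2^{\binom{n}{2}}=64$, and $2^{2n-4}=16$; the first interval is $[32,48]$ and the second is $[48,64]$, whose union is the full range $[32,64]$. In each case the claimed set of lengths is covered.

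The crux is the inequality guaranteeing that no length is skipped, namely that the right endpoint of the first interval is at least the left endpoint of the second:
\[
2^{\binom{n}{2}-1} + 2^{2n-4} \ge 2^{\binom{n}{2}} - 2^{2n-4}.
\]
This rearranges to $2^{2n-3} \ge 2^{\binom{n}{2}-1}$, i.e. $2n-2 \ge \binom{n}{2}$, which holds for $n=3$ (as $4 \ge 3$) and for $n=4$ (as $6 \ge 6$, with equality). Since the result is a straightforward corollary, I expect no genuine obstacle; the only point worth flagging is this arithmetic check, together with the observation that the inequality $2n-2 \ge \binom{n}{2}$ fails for $n \ge 5$, which is exactly why the statement is confined to $n=3$ and $n=4$.
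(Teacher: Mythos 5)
Your proposal is correct and matches the paper's (implicit) argument: the corollary is stated as an immediate consequence of Theorem \ref{thm:guplengths}, and your arithmetic verification that the two intervals $[4,8]\cup[4,8]$ and $[32,48]\cup[48,64]$ cover the full ranges for $n=3$ and $n=4$ is exactly the intended check. The observation that $2n-2\ge\binom{n}{2}$ fails for $n\ge 5$ correctly explains why the corollary stops at $n=4$ and why Question \ref{qu:labeled} remains open.
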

	
	\begin{question}\label{qu:labeled}
		For $n \ge 5$, do there exist gupcycles for labeled graphs on $[n]$ of all lengths $\ell$ such that $2^{\binom{n}{2}-1}+2^{2n-4} < \ell < 2^{\binom{n}{2}}-2^{2n-4}$?
	\end{question}

	\section{Unlabeled Graphs}\label{sec:unlabeled}

	\subsection{Gucycles for Unlabeled Graphs}
	Brockman, Kay, and Snively stated the following conjecture.
	\begin{conjecture}[Brockman, Kay, Snively\cite{BKS10}]\label{conj:graphs}
		For each $n \neq 2$, there exists a gucycle of isomorphism classes of graphs on $n$ vertices.
	\end{conjecture}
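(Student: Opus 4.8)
The plan is to realize such a gucycle as a single closed walk in the arc digraph $D_n$ that traverses exactly one edge from each isomorphism class of $n$-vertex graphs. First I would observe that the $n$-windows of any candidate gucycle $J$, read in cyclic order, form a closed walk in $D_n$ in which consecutive windows overlap by $n-1$; conversely, by Lemmas \ref{lem:gluing} and \ref{lem:cyclicgluing}, any closed walk in $D_n$ of length exceeding $2(n-1)$ can be glued into a cyclically ordered graph whose $n$-windows are exactly the edges of the walk, in order. Since a gucycle for the isomorphism classes must contain each class exactly once and therefore have length equal to the number $g_n$ of such classes, the problem reduces to choosing one edge $e_C$ from each isomorphism class $C$ so that the selected multiset $S = \{e_C\}$ is a \emph{connected, balanced} sub-multigraph of $D_n$. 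Such an $S$ is Eulerian, and its Euler tour is a single closed walk of length $g_n$ that uses each class exactly once; the glued cyclic graph then has exactly one window per class. I will call such an $S$ a \emph{balanced transversal} of the isomorphism-class partition of $E(D_n)$, and the whole proof amounts to producing one.

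To build a balanced transversal I would exploit the reversal anti-automorphism $\rho$ of $D_n$ given by relabeling $i \mapsto n+1-i$. Since the reverse of a labeled $n$-graph is isomorphic to it, $\rho$ fixes every edge isomorphism class while reversing all edge directions and sending each vertex $v$ to $v^{R}$; thus $\rho$ carries the directed edge $(u\to v)$ to $(v^{R}\to u^{R})$. This symmetry organizes the labelings of each class into reversal orbits and is the natural device for trading in-degree against out-degree. Concretely, I would start from any transversal $S_0$ (one canonical labeling per class), record its imbalance vector $b(v)=\deg^{+}_{S_0}(v)-\deg^{-}_{S_0}(v)$, which sums to zero over $V(D_n)$, and then repair it by \emph{representative swaps}: replacing $e_C$ by a different labeling $e_C'$ of the same class keeps $S$ a transversal and changes $b$ only at the (at most four) relevant $(n-1)$-vertices that serve as the old and new tails and heads. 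The abundance of labelings of each $n$-graph supplies many such swaps, and I would argue that they generate enough of the lattice $\{b:\sum_v b(v)=0\}$ to drive $b$ to the zero vector, after which connectivity is restored by further swaps that merge components without disturbing balance.

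The step I expect to be the genuine obstacle is exactly this balancing, and it is severe for small $n$. For $n=3$ the target length is $g_3=4$, which already violates the hypothesis $g_n>2(n-1)=4$ of the Cyclic Gluing Lemma; moreover no gucycle exists at all. On any cyclic order of four vertices every two of the four $3$-windows share a pair, so the window forced to realize the empty graph and the window forced to realize the triangle would have to agree on a common pair, which is impossible. Hence the exactly-once requirement simply cannot be met for $n=3$ (nor for the excluded $n=2$), so the conjecture as worded must be restricted to larger $n$. This same rigidity is what forces the provable general result to be the weaker $f$-fold form, where each class is permitted to occur a fixed number $f$ of times and a balanced selection can always be arranged.

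For $n\ge 4$, where $g_n$ is super-exponential and dwarfs $2(n-1)$, the sharing obstruction vanishes and the swap argument has ample room to operate. The hardest part to make fully rigorous \emph{uniformly} in $n$ is proving that swaps can simultaneously zero the imbalance vector and connect the selected edges; I would attempt this through a flow or $T$-join formulation of the balancing step, combined with a connectivity argument that leverages the strong connectivity and $2^{n-1}$-regularity of $D_n$. I expect this to go through for all sufficiently large $n$, with the first few values handled by hand or by direct computation, which is where I anticipate the bookkeeping to be most delicate.
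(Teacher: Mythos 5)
You should first be aware that the statement you were asked to prove is a conjecture that the paper itself does not prove: the paper's treatment of Conjecture \ref{conj:graphs} consists of (i) a refutation for $n=3$, (ii) a corrected restatement for $n \ge 4$ (Conjecture \ref{conj:unlabeled4}), which remains open and is supported only by the $n=4$ example of \cite{BKS10} and a computer-found gucycle for $n=5$, and (iii) a proof of the strictly weaker statement that for every $n \ge 3$ an $f$-fold gucycle exists for some $f \ge 1$. Your proposal aligns with this assessment in its most important verifiable claim: you correctly observe that no gucycle exists for $n=3$, and your argument is essentially the paper's. A gucycle for the four isomorphism classes of $3$-vertex graphs must have exactly four vertices; any two $3$-windows of a cyclically ordered graph on four vertices share a pair of vertices; and that pair would have to be simultaneously an edge (for the window realizing $K_3$) and a non-edge (for the window realizing $\overline{K_3}$). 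This is exactly the paper's obstruction. You also correctly identify that the provable general result is the $f$-fold weakening.

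Where you and the paper part ways is in what happens for $n \ge 4$ and in the mechanism behind the $f$-fold result. Your plan --- select one labeled representative $e_C$ per isomorphism class $C$ so that the selection is a balanced, connected sub-multigraph of $D_n$ (a ``balanced transversal''), then take an Euler tour and glue via Lemmas \ref{lem:gluing} and \ref{lem:cyclicgluing} --- would, if completed, prove Conjecture \ref{conj:unlabeled4} outright; but as you concede, the swap/flow argument for simultaneously achieving balance and connectivity is not carried out, and no such argument is known (this is precisely why the conjecture remains open). The paper's $f$-fold theorem avoids the selection problem entirely: instead of one edge per class, it uses \emph{all} labeled graphs in each class, shows that the subgraph $D[G]$ of $D_n$ induced by each class $G$ is balanced (via the cyclic relabeling $\phi$ that shifts every vertex label down by one), and then duplicates edges, adding $f/f_{G} - 1$ copies of each edge of class $G$, where $f$ is the least common multiple of the class sizes $f_G$, so that every class appears exactly $f$ times in the resulting connected, balanced, hence Eulerian multigraph. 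So your remark that in the $f$-fold setting ``a balanced selection can always be arranged'' slightly misdescribes the known proof: no selection is made at all; balance comes for free classwise, and uniform multiplicity is forced by duplication rather than arranged by swaps. In short, your proposal correctly refutes the conjecture as stated, with the same counterexample as the paper, and correctly locates the difficulty for $n \ge 4$, but its second half should be read as a research plan rather than a proof --- which matches the status of the problem in the paper itself.
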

	In support of this conjecture, they found a guycle for $n=4$. The conjecture as stated is false for $n=3$. There are $4$ graphs on $4$ vertices up to isomorphism. Hence any gucycle must have $4$ vertices, but it is not possible to realize both $K_3$ and $\overline{K_3}$ as subgraphs on $4$ vertices. However, it is possible to create a guword instead of a gucycle. See Figure~\ref{fig:gupword}.
	
	\begin{figure}[h!]
		\centering
		\tikzstyle{every node}=[circle, draw, fill=black, inner sep=0pt, minimum width=4pt]
		
		\begin{tikzpicture}
			
			\node[label={[shift={(0,-0.4)}]\scriptsize{1}}] (1) {};
			\node[label={[shift={(0,-0.4)}]\scriptsize{2}}] (2) [right of=1] {};
			\node[label={[shift={(0,-0.4)}]\scriptsize{3}}] (3) [right of=2] {};
			\node[label={[shift={(0,-0.4)}]\scriptsize{4}}] (4) [right of=3] {};
			\node[label={[shift={(0,-0.4)}]\scriptsize{5}}] (5) [right of=4] {};
			\node[label={[shift={(0,-0.4)}]\scriptsize{6}}] (6) [right of=5] {};
			
			\draw[]
			(3) to (4)
			(4) to (5)
			(5) to (6);
			\draw[bend left=55]
			(3) to (5);
		\end{tikzpicture}    
		
		\caption{A guword for isomorphism classes of graphs on $3$ vertices.}
		\label{fig:gupword}
	\end{figure}
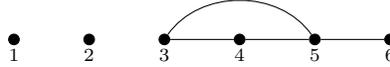
	
	The corrected version of Conjecture~\ref{conj:graphs} is as follows.
	\begin{conjecture}\label{conj:unlabeled4}
		For each $n \geq 4$, there exists a gucycle of isomorphism classes of graphs on $n$ vertices.
	\end{conjecture}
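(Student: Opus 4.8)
To prove Conjecture~\ref{conj:unlabeled4}, the plan is to realize a gucycle for the isomorphism classes of $n$-vertex graphs as the cyclic gluing of an Euler tour of a carefully chosen sub-multidigraph of the arc digraph $D_n$. Recall that the vertices of $D_n$ are the labeled graphs on $[n-1]$ and its edges are the labeled graphs on $[n]$, with an edge $e$ pointing from $\red(e[1,\ldots,n-1])$ to $\red(e[2,\ldots,n])$. A cyclically ordered graph whose $n$-windows represent each isomorphism class exactly once must have exactly $g_n := \abs{\mathcal{F}}$ vertices, where $\mathcal{F}$ is the set of isomorphism classes of $n$-vertex graphs, and its $n$-windows, read cyclically, form a closed walk in $D_n$ on $g_n$ pairwise non-isomorphic edges. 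So I would try to select, for each class $C \in \mathcal{F}$, a single ordered representative $\rho(C)$ (an edge of $D_n$) so that the set $S = \set{\rho(C) : C \in \mathcal{F}}$ forms a balanced, strongly connected sub-multidigraph of $D_n$. Because the selected edges are pairwise non-isomorphic and cover every class once, an Euler tour of $S$ (which exists by the standard fact that a balanced and strongly connected directed graph is Eulerian) glues, via Lemmas~\ref{lem:gluing} and~\ref{lem:cyclicgluing}, into a cyclically ordered graph whose $n$-windows are exactly the chosen representatives. The length hypothesis of Lemma~\ref{lem:cyclicgluing} is comfortably met since $g_n$ grows far faster than $2(n-1)$ for $n \ge 4$.

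First I would attack balance. Each edge $e$ contributes $+1$ to the out-degree of $\red(e[1,\ldots,n-1])$ and $+1$ to the in-degree of $\red(e[2,\ldots,n])$; writing $\delta(e) = \mathbf{1}_{\red(e[1,\ldots,n-1])} - \mathbf{1}_{\red(e[2,\ldots,n])} \in \mathbb{Z}^{V(D_n)}$, balance of $S$ is the single vector equation $\sum_{C \in \mathcal{F}} \delta(\rho(C)) = 0$, where each $\rho(C)$ ranges over the $\abs{C}$ labeled graphs isomorphic to $C$. I would model the joint choice of representatives and the balance constraint as an integral feasibility problem---a transportation/$b$-flow instance in which each class is a commodity that must place one unit on one of its admissible arcs---and seek a parity-free integrality argument guaranteeing a balanced selection, exploiting the complementation symmetry $e \mapsto \bar e$ and the vertex-swapping symmetry of $D_n$ to control which $\delta$-vectors are achievable within each class.

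Having secured balance, I would upgrade $S$ to strong connectivity by local exchanges that preserve balance: replacing the chosen representatives of two classes by different isomorphic representatives alters $\sum_C \delta(\rho(C))$ by a circulation and can be arranged to merge two strongly connected components without disturbing the degree equalities; iterating reduces the number of components to one. The base case $n = 4$ is already known (Brockman, Kay, and Snively), so the construction needs to be carried out only for $n \ge 5$.

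The hard part is the balance step, and it is precisely what separates the single-representative conjecture from its $f$-fold relaxation. No symmetry forces a balanced selection for free: even the fractional relaxation that spreads weight $1/\abs{C}$ uniformly over the representatives of each class weights a labeled graph $e$ by $1/\abs{C_e}$ rather than uniformly, so its net displacement is $\sum_e \delta(e)/\abs{C_e}$, which is generally nonzero despite $\sum_e \delta(e) = 0$ (the balance of all of $D_n$). Thus a balanced one-per-class selection, when it exists, must come from the genuinely integral structure of $D_n$ rather than from averaging, and establishing that such a selection exists for every $n \ge 5$---and can subsequently be made strongly connected---is the crux of the argument.
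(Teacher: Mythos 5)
You have not produced a proof, and in fairness the statement you were assigned is a conjecture that the paper itself leaves open: its only support is the Brockman--Kay--Snively gucycle for $n=4$, a computer-found gucycle for $n=5$ (Figure \ref{fig:5gucycle}), and a proof of the strictly weaker $f$-fold statement of Section \ref{sec:ffold}. Your reduction is sound as a \emph{reformulation}: a gucycle for the isomorphism classes is equivalent to a choice of one labeled representative $\rho(C)$ per class $C$ whose edge set forms a balanced, connected sub-multidigraph of $D_n$, followed by Lemmas \ref{lem:gluing} and \ref{lem:cyclicgluing}; and it correctly localizes the failure at $n=3$ (there a balanced, strongly connected one-per-class selection does exist --- both loops plus a $2$-cycle --- but the length condition of Lemma \ref{lem:cyclicgluing} fails, which is exactly why $K_3$ and $\overline{K_3}$ cannot coexist in a $4$-vertex cycle). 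But the two steps that would make this a proof --- the existence of a balanced one-per-class selection, and balance-preserving exchanges that merge strongly connected components --- are only posed (``seek a parity-free integrality argument,'' ``can be arranged to merge two strongly connected components''), never established. As your final paragraph concedes, those steps \emph{are} the conjecture; what you have written is a restatement of the problem in the arc-digraph language the paper already sets up, not a solution of it.

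Your diagnosis of where the difficulty lies also contains a concrete error. You assert that the uniform fractional relaxation is unbalanced because its net displacement $\sum_e \delta(e)/\abs{C_e}$ is ``generally nonzero.'' It is always zero: the paper's proof of the $f$-fold theorem shows that for every isomorphism class $G$, the sub-digraph $D[G]$ induced by the edges in class $G$ is itself balanced, via the cyclic-shift bijection $\phi$ that subtracts $1$ from each vertex label. Hence $\sum_{e \in C}\delta(e) = 0$ separately for every class $C$, so $\sum_C \abs{C}^{-1}\sum_{e \in C}\delta(e) = 0$ at every vertex. This per-class balance is precisely what the paper exploits: scaling the fractional solution by the least common multiple $f$ of the class sizes $f_G$ clears denominators and yields the $f$-fold gucycle. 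The genuine obstruction to your program is therefore not fractional infeasibility, as you suggest, but integrality: the polytope cut out by the balance equations together with the one-edge-per-class equations is not a network-flow polytope once the partition constraints are imposed, and nothing guarantees it contains an integer point, let alone one whose support is connected. Any serious attack along your lines must confront that integrality question (or proceed by direct construction, as the paper does computationally for $n=5$); as written, your proposal stops exactly where the problem begins.
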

	In support of this conjecture, we found a gucycle for unlabeled graphs on $5$ vertices (Figure \ref{fig:5gucycle}) using a computer program written by Bernard Lidick\'{y}.
	Finding a gucycle for unlabeled graphs on 6 vertices was beyond what the program could do in a short time. 
	
	\begin{figure}[h!]
		\centering
		
		\tikzset{
			vtx/.style={inner sep=1.3pt, outer sep=0pt, circle, fill=black,draw=black}, 
		}
		\newcommand{\ledge}[2]{
			\pgfmathtruncatemacro{\angle}{-25+(#2-#1)*25}
			\draw (v#1) to[bend left=\angle,looseness=1.5] (v#2);
		}
		
		\begin{tikzpicture}[scale=0.45]
			\foreach \x in {0,...,33}{
				\draw (\x,0) node[vtx](v\x){};
			}
			\ledge{0}{3} \ledge{1}{2} \ledge{1}{4} \ledge{2}{4} \ledge{3}{7} \ledge{4}{5} \ledge{4}{8} \ledge{5}{8} \ledge{5}{9} \ledge{6}{7} \ledge{6}{8} \ledge{6}{9} \ledge{7}{8} \ledge{7}{9} \ledge{7}{10} \ledge{8}{9} \ledge{8}{11} \ledge{8}{12} \ledge{9}{11} \ledge{9}{12} \ledge{10}{11} \ledge{10}{14} \ledge{11}{13} \ledge{12}{13} \ledge{12}{15} \ledge{13}{15} \ledge{13}{16} \ledge{14}{15} \ledge{14}{16} \ledge{14}{18} \ledge{15}{18} \ledge{16}{18} \ledge{16}{19} \ledge{17}{18} \ledge{17}{20} \ledge{18}{19} \ledge{18}{22} \ledge{19}{21} \ledge{19}{22} \ledge{19}{23} \ledge{20}{21} \ledge{20}{22} \ledge{20}{23} \ledge{20}{24} \ledge{21}{22} \ledge{21}{23} \ledge{21}{24} \ledge{22}{23} \ledge{22}{24} \ledge{23}{24} \ledge{23}{26} \ledge{24}{26} \ledge{25}{29} \ledge{26}{28} \ledge{27}{28} \ledge{27}{31} \ledge{28}{29} \ledge{28}{30} \ledge{28}{31} \ledge{28}{32} \ledge{29}{31} \ledge{29}{32} \ledge{29}{33} \ledge{30}{31} \ledge{30}{32} \ledge{30}{33}
			
		\end{tikzpicture}    
		
		\caption{A graph universal cycle for unlabeled graphs on $5$ vertices.}
		\label{fig:5gucycle}
	\end{figure}
	
	\subsection{$f$-fold Gucycles for Unlabeled Graphs}\label{sec:ffold}
	
	In analogy to $f$-fold De Bruijn cycles, introduced by Tesler \cite{Tesler2017}, we define $f$-fold gucycles as follows. 
	
	\begin{definition}
		An \emph{$f$-fold gucycle} for a family of $n$-vertex graphs $\mathcal{F}$ is a cyclically ordered graph $G$ that contains each graph in $\mathcal{F}$ exactly $f$ times.
	\end{definition}
	
	Gucycles, therefore, are the same as $1$-fold gucycles. By generalizing from gucycles to $f$-fold gucycles, we prove a weaker version of Conjecture \ref{conj:graphs}.
	
	\begin{theorem}
		For every $n\ge 3$, there is an $f$-fold gucycle for $n$-vertex unlabeled graphs for some $f \ge 1$.
	\end{theorem}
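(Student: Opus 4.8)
The plan is to build an analogue of the arc digraph for unlabeled graphs and extract an Euler tour from it, just as the labeled case relates gucycles to Euler tours of $D_n$. First I would define a directed multigraph $\widetilde{D}_n$ whose vertices are the isomorphism classes of graphs on $n-1$ vertices and whose edges are the isomorphism classes of graphs on $n$ vertices, where an edge corresponding to the class of an $n$-vertex graph $e$ points from the class of $e[1,2,\ldots,n-1]$ to the class of $e[2,3,\ldots,n]$. The subtlety is that ``an $n$-vertex graph restricted to its first $n-1$ vertices'' is not well-defined on isomorphism classes, since different orderings of a graph give different induced subgraphs. I would handle this by letting each edge of $\widetilde{D}_n$ be an \emph{ordered} $n$-vertex graph (a representative of its class), with multiplicity recording how many ordered graphs in that isomorphism class yield each head/tail pair; equivalently, one vertex of $\widetilde{D}_n$ for each unlabeled $(n-1)$-vertex graph, and edges tallied so that each isomorphism class of $n$-vertex graphs contributes edges according to its automorphisms and orderings. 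The key point is to set up the multiplicities so the digraph is both balanced and strongly connected.

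The second step is to verify that $\widetilde{D}_n$ is balanced. For a fixed vertex $H$ (an unlabeled $(n-1)$-vertex graph), the out-edges correspond to ways of appending a new last vertex $n$ to an ordering of $H$, and the in-edges correspond to ways of prepending a new first vertex; by a symmetry argument (reversing or relabeling the vertex order) these two counts should coincide, giving equal in- and out-degree at every vertex. I would also argue strong connectivity: from any $(n-1)$-vertex graph one can reach the empty graph (and from the empty graph reach anything) by repeatedly appending isolated or fully-connected vertices, so all vertices communicate. Given balance and strong connectivity, the standard fact (the Proposition in the preliminaries) gives an Euler tour $T$.

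The third step is to convert $T$ into an $f$-fold gucycle via the Gluing Lemma and Cyclic Gluing Lemma. Successively gluing the consecutive $n$-vertex graphs along $T$ (each consecutive pair overlaps by $n-1$ by construction of the arc digraph), and then cyclically gluing the ends provided the tour is long enough relative to $n$, produces a cyclically ordered graph $G$ whose $n$-windows are exactly the edges of $\widetilde{D}_n$, each appearing once. The value of $f$ is then the number of ordered representatives per isomorphism class that the multiplicities encode: each unlabeled $n$-vertex graph class appears the same number $f$ of times as a window, where $f$ is determined by the uniform multiplicity built into $\widetilde{D}_n$ (for instance $f = n!/|\mathrm{Aut}|$ summed appropriately, or simply $f$ copies of the full labeled-style digraph collapsed by isomorphism). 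I would confirm that the length of $T$ exceeds $2(n-1)$ so the Cyclic Gluing Lemma applies; since the number of $n$-vertex graphs grows superpolynomially this is automatic for $n \ge 3$ (checking the small cases by hand).

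\textbf{Main obstacle.} The hard part will be defining the multiplicities in $\widetilde{D}_n$ so that (i) every isomorphism class of $n$-vertex graph is covered the \emph{same} number $f$ of times, and (ii) the digraph is exactly balanced. Unlike the labeled case, an unlabeled $n$-vertex graph may restrict to its initial $(n-1)$-vertices in several non-isomorphic ways depending on the chosen ordering, and automorphisms cause different classes to contribute different raw counts of ordered graphs. The cleanest fix I anticipate is to take $f$ to be a common value (such as the number of orderings, i.e.\ using all $n!$ labelings and then identifying isomorphic windows only for the \emph{count}, not for the digraph edges), making $\widetilde{D}_n$ essentially a quotient of $D_n$ under the relabeling action; balance then follows from the balance of $D_n$, and strong connectivity is inherited. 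Getting this bookkeeping precise—so that ``exactly $f$ times'' holds uniformly across all isomorphism classes—is the crux, and the rest is routine application of the gluing lemmas.
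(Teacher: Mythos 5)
Your high-level strategy --- an arc digraph whose Euler tours yield the cycle, with edge multiplicities chosen to equalize how often each isomorphism class is covered --- is the same as the paper's, but the two steps you yourself flag as ``the crux'' and leave unresolved are precisely the content of the paper's proof, and one of your proposed fixes would actually break the construction. You propose taking the vertices of $\widetilde{D}_n$ to be isomorphism classes of unlabeled $(n-1)$-vertex graphs. That quotient is too coarse: the Gluing Lemma requires consecutive windows to agree \emph{as ordered graphs} (the relation $\oiso$), and two $(n-1)$-vertex ordered graphs can be isomorphic as unlabeled graphs without being order-isomorphic (e.g.\ the orderings of the path on $\{1,2,3\}$ with edge sets $\{12,23\}$ and $\{13,23\}$). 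An Euler tour of such a quotient digraph therefore need not glue into a cyclically ordered graph at all. The paper avoids this by never quotienting the vertex set: it works entirely inside $D_n$, whose vertices remain labeled graphs on $[n-1]$, and only groups the \emph{edges} by isomorphism class.

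The bookkeeping you defer is resolved in the paper by a specific lemma your outline lacks: for each isomorphism class $G$, the subgraph $D[G]$ of $D_n$ induced by the edges isomorphic to $G$ is itself balanced. This is proved via the cyclic label shift $\phi$ (subtract $1$ from each label and relabel $0$ as $n$), which bijects the in-edges of $D[G]$ at any vertex $H$ with its out-edges at $H$. Your suggested symmetry (reversing the vertex order) sends out-edges at $H$ to in-edges at the \emph{reversal} of $H$, not at $H$, so it does not give balance directly. Once per-class balance is established, the paper sets $f_G = e(D[G])$, takes $f = \operatorname{lcm}_G f_G$, and adds $f/f_{G_L}-1$ copies of each edge $L$; the resulting multigraph is balanced (because each $D[G]$ is, so duplication within a class adds equal in- and out-degree everywhere), connected (it contains $D_n$), hence Eulerian, and covers every class exactly $f$ times. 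Note that your candidate ``$f = n!/|\mathrm{Aut}(G)|$'' is exactly the quantity $f_G$, which \emph{varies} with $G$, so it cannot serve as the uniform $f$; a class-dependent duplication factor is unavoidable, and justifying that this duplication preserves balance is exactly where the per-class balance lemma is needed.
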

	
	\begin{proof}
		Let $D=D_n$ be the arc digraph for labeled graphs on $[n]$. Let $G$ be an $n$-vertex unlabeled graph. Let $D[G]$ be the subgraph of $D$ induced by the edges that are labeled graphs on $[n]$ in the isomorphism class $G$. We will first show that $D[G]$ is balanced. Suppose $H$ is an $(n-1)$-vertex labeled graph that is a vertex of $D[G]$. Let $J$ be any edge of $D[G]$ whose head is $H$; in other words, suppose $J$ is a labeled graph on $[n]$ isomorphic to $G$ such that $J[2, \ldots, n] \oiso H$. We define $\phi(J)$ to be the labeled graph on $[n]$ obtained by subtracting $1$ from each vertex label and then relabeling $0$ as $n$. By definition $\phi$ is a graph isomorphism, so $\phi(J)$ is also isomorphic to $G$. Moreover, writing $J' = \phi(J)$, we have $J'[1, \ldots, n-1] \oiso H$, so $J'$ is an edge of $D[G]$ whose tail is $H$. The inverse of $\phi$ similarly adds $1$ to each vertex label and then relabels $n+1$ as $1$. Since $\phi$ is a bijection from the edges of $D[G]$ pointing to an arbitrary vertex $H$ to the edges of $D[G]$ pointing from $H$, we conclude that $D[G]$ is balanced. Loops of $D[G]$ are fixed points of $\phi$ and contribute to both the in-degree and the out-degree of their vertices.
		
		For each isomorphism class $G$, let $f_G$ be the number of edges in $D[G]$, or equivalently the number of distinct labeled graphs isomorphic to $G$. Let $f$ be the least common multiple of the set of $f_G$'s for all $n$-vertex unlabeled graphs $G$.
		
		For each edge $L$ of $D$, let $G_L$ be the isomorphism class of $L$, and add $f/f_{G_L}-1$ copies of the edge $L$ to $D$. Let $M$ be the resulting directed multigraph. We have added only copies of existing edges, so $M$ retains the property that if the head of $u$ is the tail of $v$ then $u[2,\ldots,n]\oiso v[1,\ldots,n-1]$. The directed multigraph $M$ is connected because it contains the connected digraph $D$. As $D[G]$ is balanced for every isomorphism class $G$, we added the same number of in-edges and out-edges to each vertex $H$ of $D$ to produce $M$. (Multiple isomorphism classes $G$ may contribute to this number for a given vertex $H$.) Since $D$ is balanced, $M$ is balanced and Eulerian. In addition, $M$ contains exactly $(1 + f/f_{G_L}-1)f_{G_L} = f$ edges representing each $n$-vertex unlabeled graph $G_L$. An Euler tour in $M$ traverses each edge $L$ exactly once, so traverses each isomorphism class $G_L$ exactly $f$ times, and therefore is equivalent to an $f$-fold gucycle for $n$-vertex unlabeled graphs.
	\end{proof}
	
	\begin{example}We will show how to find an $f$-fold gucycle for $3$-vertex unlabeled graphs using the ideas of the above proof. In Figure \ref{fig:3foldarc}, we show the arc digraph $D_3$ and the directed multigraph $M$. Here $f_{P_3} = 3$, $f_{K_2\cup K_1} = 3$, $f_{K_3} = 1$, and $f_{\overline{K_3}} = 1$, so $f = 3$.
		
		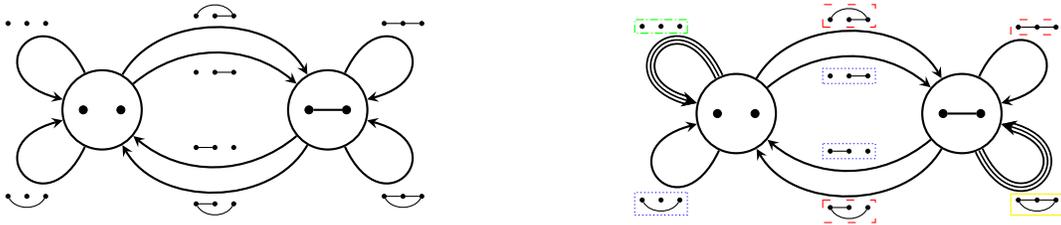
\begin{figure}[h!]
			\centering
			\begin{multicols}{2}
				\tikzstyle{every node}=[circle, draw, fill=black, inner sep=0pt, minimum width=3pt]
				\begin{tikzpicture}[]
					\node[circle, draw, thick, fill=white, inner sep=0pt, minimum width=30pt] (0) at (.25,0) {};
					\node[circle, draw, thick, fill=white, inner sep=0pt, minimum width=30pt] (00) at (3.25,0) {};
					
					\draw[->, -stealth, thick] (00) to [bend right=-40] (0);
					\draw[->, -stealth, thick, bend left=60]
					(00) to (0);
					\draw[->, -stealth, thick, bend left=40]
					(0) to (00);
					\draw[->, -stealth, thick, bend left=60]
					(0) to (00);
					\draw[thick, ->, -stealth] (0) to [out=115,in=165,looseness=7] (0);
					\draw[thick, ->, -stealth] (0) to [out=245,in=195,looseness=7] (0);
					\draw[thick, ->, -stealth] (00) to [out=65,in=15,looseness=7] (00);
					\draw[thick, ->, -stealth] (00) to [out=-65,in=-15,looseness=7] (00);
					
					\node (1) at (0,0) {};
					\node (2) at (.5,0) {};
					\node (3) at (3, 0) {};
					\node (4) at (3.5,0) {};
					\draw[thick]
					(3) to (4);
					
					\node[circle, draw, fill=black, inner sep=0pt, minimum width=1.5pt] (a) at (1.75, 1.25) {};
					\node[circle, draw, fill=black, inner sep=0pt, minimum width=1.5pt] (b) at (1.5, 1.25) {};
					\node[circle, draw, fill=black, inner sep=0pt, minimum width=1.5pt] (c) at (2, 1.25) {};
					\draw (a) to (c);
					\draw(b) to [bend left=60] (c);
					
					\node[circle, draw, fill=black, inner sep=0pt, minimum width=1.5pt] (k) at (1.75, .5) {};
					\node[circle, draw, fill=black, inner sep=0pt, minimum width=1.5pt] (l) at (1.5, .5) {};
					\node[circle, draw, fill=black, inner sep=0pt, minimum width=1.5pt] (m) at (2, .5) {};
					\draw (k) to (m); 
					
					\node[circle, draw, fill=black, inner sep=0pt, minimum width=1.5pt] (d) at (1.75, -1.25) {};
					\node[circle, draw, fill=black, inner sep=0pt, minimum width=1.5pt] (e) at (1.5, -1.25) {};
					\node[circle, draw, fill=black, inner sep=0pt, minimum width=1.5pt] (f) at (2, -1.25) {};
					\draw (d) to (e);
					\draw(e) to [bend left=-60] (f);
					
					\node[circle, draw, fill=black, inner sep=0pt, minimum width=1.5pt] (h) at (1.75, -.5) {};
					\node[circle, draw, fill=black, inner sep=0pt, minimum width=1.5pt] (i) at (1.5, -.5) {};
					\node[circle, draw, fill=black, inner sep=0pt, minimum width=1.5pt] (j) at (2, -.5) {};
					\draw (h) to (i);
					
					\node[circle, draw, fill=black, inner sep=0pt, minimum width=1.5pt] (n) at (4.25, -1.15) {};
					\node[circle, draw, fill=black, inner sep=0pt, minimum width=1.5pt] (o) at (4, -1.15) {};
					\node[circle, draw, fill=black, inner sep=0pt, minimum width=1.5pt] (p) at (4.5, -1.15) {};
					\draw 
					(n) to (p)
					(n) to (o);
					\draw(o) to [bend left=-60] (p);
					
					\node[circle, draw, fill=black, inner sep=0pt, minimum width=1.5pt] (q) at (-.75, -1.15) {};
					\node[circle, draw, fill=black, inner sep=0pt, minimum width=1.5pt] (r) at (-.5, -1.15) {};
					\node[circle, draw, fill=black, inner sep=0pt, minimum width=1.5pt] (s) at (-1, -1.15) {};
					\draw(r) to [bend left=60] (s);
					
					\node[circle, draw, fill=black, inner sep=0pt, minimum width=1.5pt] (t) at (-.75, 1.15) {};
					\node[circle, draw, fill=black, inner sep=0pt, minimum width=1.5pt] (u) at (-.5, 1.15) {};
					\node[circle, draw, fill=black, inner sep=0pt, minimum width=1.5pt] (v) at (-1, 1.15) {};
					
					\node[circle, draw, fill=black, inner sep=0pt, minimum width=1.5pt] (w) at (4.25, 1.15) {};
					\node[circle, draw, fill=black, inner sep=0pt, minimum width=1.5pt] (x) at (4, 1.15) {};
					\node[circle, draw, fill=black, inner sep=0pt, minimum width=1.5pt] (y) at (4.5, 1.15) {};
					\draw (w) to (x)
					(w) to (y);
				\end{tikzpicture}
				
				\columnbreak
				
				\begin{tikzpicture}[]
					\node[circle, draw, thick, fill=white, inner sep=0pt, minimum width=30pt] (0) at (.25,0) {};
					\node[circle, draw, thick, fill=white, inner sep=0pt, minimum width=30pt] (00) at (3.25,0) {};
					
					\draw[->, -stealth, thick] (00) to [bend right=-40] (0);
					\draw[->, -stealth, thick, bend left=60]
					(00) to (0);
					\draw[->, -stealth, thick, bend left=40]
					(0) to (00);
					\draw[->, -stealth, thick, bend left=60]
					(0) to (00);
					\draw[thick, ->, -stealth, double distance=2pt] (0) to [out=115,in=165,looseness=7] (0);
					\draw[draw=white, double=black, thick, arrows={-Stealth[length=2mm, width=2mm, black]}] (0) to [out=115,in=165,looseness=7] (0);
					\draw[thick, ->, -stealth] (0) to [out=245,in=195,looseness=7] (0);
					\draw[thick, ->, -stealth] (00) to [out=65,in=15,looseness=7] (00);
					\draw[thick, ->, -stealth, double distance=2pt] (00) to [out=-65,in=-15,looseness=7] (00);
					\draw[draw=white, double=black, thick, arrows={-Stealth[length=2mm, width=2mm, black]}] (00) to [out=-65,in=-15,looseness=7] (00);
					
					\node (1) at (0,0) {};
					\node (2) at (.5,0) {};
					\node (3) at (3, 0) {};
					\node (4) at (3.5,0) {};
					\draw[thick]
					(3) to (4);
					
					\draw[red, loosely dashed] (1.4, 1.15) rectangle (2.1, 1.45);
					\node[circle, draw, fill=black, inner sep=0pt, minimum width=1.5pt] (a) at (1.75, 1.25) {};
					\node[circle, draw, fill=black, inner sep=0pt, minimum width=1.5pt] (b) at (1.5, 1.25) {};
					\node[circle, draw, fill=black, inner sep=0pt, minimum width=1.5pt] (c) at (2, 1.25) {};
					\draw (a) to (c);
					\draw(b) to [bend left=60] (c);
					
					\draw[blue, densely dotted] (1.4, .4) rectangle (2.1, .6);
					\node[circle, draw, fill=black, inner sep=0pt, minimum width=1.5pt] (k) at (1.75, .5) {};
					\node[circle, draw, fill=black, inner sep=0pt, minimum width=1.5pt] (l) at (1.5, .5) {};
					\node[circle, draw, fill=black, inner sep=0pt, minimum width=1.5pt] (m) at (2, .5) {};
					\draw (k) to (m); 
					
					\draw[red, loosely dashed] (1.4, -1.15) rectangle (2.1, -1.45);
					\node[circle, draw, fill=black, inner sep=0pt, minimum width=1.5pt] (d) at (1.75, -1.25) {};
					\node[circle, draw, fill=black, inner sep=0pt, minimum width=1.5pt] (e) at (1.5, -1.25) {};
					\node[circle, draw, fill=black, inner sep=0pt, minimum width=1.5pt] (f) at (2, -1.25) {};
					\draw (d) to (e);
					\draw(e) to [bend left=-60] (f);
					
					\draw[blue, densely dotted] (1.4, -.4) rectangle (2.1, -.6);
					\node[circle, draw, fill=black, inner sep=0pt, minimum width=1.5pt] (h) at (1.75, -.5) {};
					\node[circle, draw, fill=black, inner sep=0pt, minimum width=1.5pt] (i) at (1.5, -.5) {};
					\node[circle, draw, fill=black, inner sep=0pt, minimum width=1.5pt] (j) at (2, -.5) {};
					\draw (h) to (i);
					
					\draw[yellow] (3.9, -1.07) rectangle (4.6, -1.35);
					\node[circle, draw, fill=black, inner sep=0pt, minimum width=1.5pt] (n) at (4.25, -1.15) {};
					\node[circle, draw, fill=black, inner sep=0pt, minimum width=1.5pt] (o) at (4, -1.15) {};
					\node[circle, draw, fill=black, inner sep=0pt, minimum width=1.5pt] (p) at (4.5, -1.15) {};
					\draw 
					(n) to (p)
					(n) to (o);
					\draw(o) to [bend left=-60] (p);
					
					\draw[blue, densely dotted] (-.4, -1.05) rectangle (-1.1, -1.35);
					\node[circle, draw, fill=black, inner sep=0pt, minimum width=1.5pt] (q) at (-.75, -1.15) {};
					\node[circle, draw, fill=black, inner sep=0pt, minimum width=1.5pt] (r) at (-.5, -1.15) {};
					\node[circle, draw, fill=black, inner sep=0pt, minimum width=1.5pt] (s) at (-1, -1.15) {};
					\draw(r) to [bend left=60] (s);
					
					\draw[green, densely dashdotted] (-.4, 1.07) rectangle (-1.1, 1.25);
					\node[circle, draw, fill=black, inner sep=0pt, minimum width=1.5pt] (t) at (-.75, 1.16) {};
					\node[circle, draw, fill=black, inner sep=0pt, minimum width=1.5pt] (u) at (-.5, 1.16) {};
					\node[circle, draw, fill=black, inner sep=0pt, minimum width=1.5pt] (v) at (-1, 1.16) {};
					
					\draw[red, loosely dashed] (3.9, 1.05) rectangle (4.6, 1.25);
					\node[circle, draw, fill=black, inner sep=0pt, minimum width=1.5pt] (w) at (4.25, 1.15) {};
					\node[circle, draw, fill=black, inner sep=0pt, minimum width=1.5pt] (x) at (4, 1.15) {};
					\node[circle, draw, fill=black, inner sep=0pt, minimum width=1.5pt] (y) at (4.5, 1.15) {};
					\draw (w) to (x)
					(w) to (y);
				\end{tikzpicture}
			\end{multicols}
			\caption{Arc digraph $D_3$ (left) and directed multigraph $M$ (right). Isomorphism classes are shown in boxes of the same color. }
			\label{fig:3foldarc}
		\end{figure}

		Any Euler tour in the above directed multigraph $M$ is a $3$-fold gucycle for unlabeled graphs on $3$ vertices, an example of which is shown in Figure \ref{fig:3foldgucycle}.
		
		\begin{figure}[h!]
			\centering
			
			\tikzstyle{every node}=[circle, draw, fill=black, inner sep=0pt, minimum width=4pt]
			\begin{tikzpicture}[scale=0.4]
				\node[label={[shift={(0,-0.4)}]\scriptsize{1}}] (2) {};
				\node[label={[shift={(0,-0.4)}]\scriptsize{2}}] (3) [right of=2] {};
				\node[label={[shift={(0,-0.4)}]\scriptsize{3}}] (4) [right of=3] {};
				\node[label={[shift={(0,-0.4)}]\scriptsize{4}}] (5) [right of=4] {};
				\node[label={[shift={(0,-0.4)}]\scriptsize{5}}] (6) [right of=5] {};
				\node[label={[shift={(0,-0.4)}]\scriptsize{6}}] (7) [right of=6] {};
				\node[label={[shift={(0,-0.4)}]\scriptsize{7}}] (8) [right of=7] {};
				\node[label={[shift={(0,-0.4)}]\scriptsize{8}}] (9) [right of=8] {};
				\node[label={[shift={(0,-0.4)}]\scriptsize{9}}] (10) [right of=9] {};
				\node[label={[shift={(0,-0.4)}]\scriptsize{10}}] (11) [right of=10] {};
				\node[label={[shift={(0,-0.4)}]\scriptsize{11}}] (12) [right of=11] {};
				\node[label={[shift={(0,-0.4)}]\scriptsize{12}}] (1) [right of=12]{};
				\draw
				(4) to (5)
				(5) to (6)
				(6) to (7)
				(7) to (8)
				(10) to (11)
				(11) to (12);
				\draw[bend left=55]
				(3) to (5)
				(4) to (6)
				(5) to (7)
				(6) to (8)
				(7) to (9)
				(8) to (10);
			\end{tikzpicture}   
			
			\caption{A $3$-fold gucycle for $3$-vertex unlabeled graphs, constructed from Figure \ref{fig:3foldarc}.}
			\label{fig:3foldgucycle}
		\end{figure}
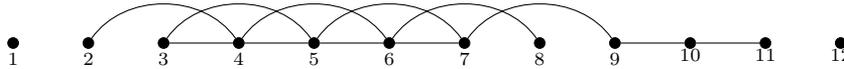
		
	\end{example}
	
	\subsection{Gupwords for Unlabeled Graphs}
	
	In Figure \ref{fig:gupwordunlabel} we present a gupword for $4$-vertex unlabeled graphs with one diamond edge. It is an open question whether gupcycles and gupwords exist for unlabeled $n$-vertex graphs in general.
	
	\begin{figure}[h!]
		\centering
		
		\tikzstyle{every node}=[circle, draw, fill=black, inner sep=0pt, minimum width=4pt]
		
		\begin{tikzpicture}
			
			\node[label={[shift={(0,-0.4)}]\scriptsize{1}}] (1) {};
			\node[label={[shift={(0,-0.4)}]\scriptsize{2}}] (2) [right of=1] {};
			\node[label={[shift={(0,-0.4)}]\scriptsize{3}}] (3) [right of=2] {};
			\node[label={[shift={(0,-0.4)}]\scriptsize{4}}] (4) [right of=3] {};
			\node[label={[shift={(0,-0.4)}]\scriptsize{5}}] (5) [right of=4] {};
			\node[label={[shift={(0,-0.4)}]\scriptsize{6}}] (6) [right of=5] {};
			\node[label={[shift={(0,-0.4)}]\scriptsize{7}}] (7) [right of=6] {};
			\node[label={[shift={(0,-0.4)}]\scriptsize{8}}] (8) [right of=7] {};
			\node[label={[shift={(0,-0.4)}]\scriptsize{9}}] (9) [right of=8] {};
			\node[label={[shift={(0.1,-0.4)}]\scriptsize{10}}] (10) [right of=9] {};
			\node[label={[shift={(0.1,-0.4)}]\scriptsize{11}}] (11) [right of=10] {};
			\node[label={[shift={(0,-0.4)}]\scriptsize{12}}] (12) [right of=11] {};
			\node[label={[shift={(0,-0.4)}]\scriptsize{13}}] (13) [right of=12] {};

			\path[every node/.style]
			(1) edge node {} (2)    
			(2) edge node {} (3)    
			(3) edge node {} (4)    
			(4) edge node {} (5)    
			(8) edge node {} (9)
			(12) edge node {} (13);
			
			\draw[bend left=-60]
			(1) to (4)
			(2) to (5)
			(3) to (6)
			(4) to (7)
			(7) to (10)
			(8) to (11);
			
			\draw[bend left=55]
			(2) to (4)
			(3) to (5)
			(5) to (7)
			(6) to (8)
			(7) to (9)
			(8) to (10);
			
			\draw [style=dashed, bend left=60]
			(10) to (13);
			
		\end{tikzpicture}
		
		\caption{Gupword for $4$-vertex unlabeled graphs.}
		\label{fig:gupwordunlabel}
		
	\end{figure}
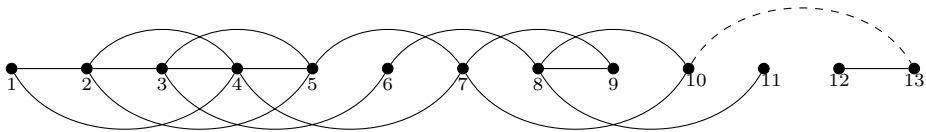
	
	\section{Threshold Graphs}\label{sec:threshold}
	
	\begin{definition}
		A \emph{threshold graph} is a graph that can be constructed from a single vertex by repeatedly adding either an isolated vertex or a dominating vertex. We write $T_n$ for the set of all $n$-vertex threshold graphs.
	\end{definition}
	
	This definition implies that $T_n$ corresponds bijectively to $\set{0,1}^{n-1}$ as follows. Given any word in $\set{0,1}^{n-1}$, we can construct an $n$-vertex threshold graph. Start with a single vertex. Then, read the word from left to right, sequentially adding vertices to the graph, where the added vertex is isolated if the entry is $0$ and dominating if the entry is $1$. We call this binary word the \emph{binary representation} of the threshold graph. Given a threshold graph, the binary representation is uniquely determined by successively finding and deleting either a dominating vertex or an isolated vertex. See \cite{MP95, Keough15} for further discussion of characterizations of threshold graphs. The related concept of a $T_n$-universal threshold graph is studied in \cite{HK94}.
	
	\subsection{Gucycles for Threshold Graphs}
	
	Using binary representations of threshold graphs, we show that each De Bruijn cycle for $\set{0,1}^{n-1}$ produces a distinct gucycle for $n$-vertex threshold graphs.
	
	\begin{theorem}\label{thm:thresholdgucycle} For every $n \ge 4$, there exist at least $2^{2^{n-2}-(n-1)}$ gucycles for the $n$-vertex threshold graphs.
	\end{theorem}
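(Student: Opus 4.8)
The plan is to exhibit an injection from the set of De Bruijn cycles for $\{0,1\}^{n-1}$ into the set of gucycles for $n$-vertex threshold graphs, and then to count the domain using Theorem \ref{thm:DBcount}. Setting the alphabet size $a=2$ and the word length to $n-1$ in the formula of Theorem \ref{thm:DBcount} gives $\frac{(2!)^{2^{n-2}}}{2^{n-1}} = 2^{2^{n-2}-(n-1)}$ De Bruijn cycles for $\{0,1\}^{n-1}$, which is exactly the claimed lower bound; so it suffices to turn each such De Bruijn cycle into a gucycle and to show that distinct De Bruijn cycles yield distinct gucycles.

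Given a De Bruijn cycle $D = d_1 d_2 \cdots d_{2^{n-1}}$ (with indices read cyclically modulo $L := 2^{n-1}$), I would build a cyclically ordered graph $G_D$ on the vertex set $[L]$ by the rule that, for vertices $u$ preceding $v$ at cyclic distance at most $n-1$, we put $uv \in E(G_D)$ if and only if $d_{v-1}=1$. Equivalently, each vertex $v$ is joined to all $n-1$ of its predecessors when $d_{v-1}=1$ and to none of them when $d_{v-1}=0$. The crucial feature is that this adjacency depends only on the \emph{later} of the two endpoints, matching the way a dominating or isolated vertex is attached in the binary-representation construction of a threshold graph. For this rule to be well defined on the cycle I need the later endpoint of every pair at cyclic distance at most $n-1$ to be unambiguous, which holds as soon as $L > 2(n-1)$; since $L = 2^{n-1}$, this is exactly the hypothesis $n \ge 4$ (it fails at $n=3$, where $L = 4 = 2(n-1)$). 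This is also the length condition of the Cyclic Gluing Lemma (Lemma \ref{lem:cyclicgluing}), so $G_D$ can alternatively be obtained by cyclically gluing the ordered graph read off linearly from one period of $D$.

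Next I would verify that $G_D$ is a gucycle. Fixing a window $G_D[i,i+1,\dots,i+n-1]$, the fact that adjacency within the window is governed by the later endpoint means that the $(k+1)$-st vertex of the window is adjacent to all earlier window-vertices exactly when $d_{i+k-1}=1$ and to none otherwise; this is precisely the statement that the window is the ordered threshold graph whose binary representation is $d_i d_{i+1}\cdots d_{i+n-2}$. As $D$ is a De Bruijn cycle for $\{0,1\}^{n-1}$, each binary word of length $n-1$ occurs exactly once among these windows, so by the bijection between $T_n$ and $\{0,1\}^{n-1}$ each $n$-vertex threshold graph is contained in $G_D$ exactly once; hence $G_D$ is a gucycle. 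Injectivity is then immediate by reversing the construction: from $G_D$ the bit $d_j$ is recovered as the indicator of the edge between the consecutive vertices $j$ and $j+1$, so $D \mapsto G_D$ is invertible and therefore injective. Moreover a rotation of $D$ induces the corresponding rotation of $G_D$, so the correspondence respects the cyclic identification under which the De Bruijn cycles were counted, and combining the injection with the count yields at least $2^{2^{n-2}-(n-1)}$ gucycles.

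I expect the main obstacle to be the careful bookkeeping in the middle step: confirming that the later-endpoint adjacency rule is simultaneously (i) globally consistent across all overlapping windows, (ii) well defined on the cycle precisely when $n \ge 4$, and (iii) an exact reproduction of the binary-representation attachment rule for threshold graphs, so that each window is the intended ordered threshold graph. Once that correspondence is pinned down, both the counting and the injectivity are routine.
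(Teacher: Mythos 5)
Your proposal is correct and follows essentially the same route as the paper: the same edge rule (each vertex joined to all of its $n-1$ predecessors exactly when the corresponding De Bruijn bit is $1$), the same use of $2^{n-1}>2(n-1)$ to justify $n\ge 4$, and the same count via Theorem \ref{thm:DBcount}. The only cosmetic difference is that you invert the map by reading single bits off consecutive pairs rather than reading off whole binary representations window by window, which is an equally valid way to establish injectivity.
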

	
	\begin{proof}
		We will describe an injective map from De Bruijn cycles for $\set{0,1}^{n-1}$ to gucycles for $n$-vertex threshold graphs. Let $w$ be a De Bruijn cycle for $\set{0,1}^{n-1}$; note $\abs{w} = 2^{n-1}$. Construct a graph $G$ on vertex set $[2^{n-1}]$ as follows. For each $i \in [2^{n-1}]$, if $w_i = 1$, then add the edges $ij$ for each $j \in \set{i-(n-1), i-(n-2), \ldots, i-1}$, where subtraction is done $\mod 2^{n-1}$. (If $w_i=0$, then do nothing.)
		
		We will show that $G$ is a gucycle. Let $T$ be an $n$-vertex threshold graph and $b \in \set{0,1}^{n-1}$ its binary representation. The condition $n \ge 4$ ensures that the number of vertices in $G$, $2^{n-1}$, is greater than $2(n-1)$, so if two vertices $x$ and $y$ appear in the same $n$-window of $G$ with $x$ before $y$, then there is no $n$-window of $G$ in which $y$ is before $x$. Thus, by the construction of $G$ and by the bijection between $T_n$ and $\set{0,1}^{n-1}$, the $n$-window of $G$ starting at vertex $i-1$ is the threshold graph $T$ if and only if the $(n-1)$-window of $w$ starting at position $i$ is $b$. Every word in $\set{0,1}^{n-1}$ appears exactly once as an $(n-1)$-window in $w$, so every $n$-vertex threshold graph appears exactly once as an $n$-window of $G$.
		
		To show this construction gives an injective map, we will describe the inverse. Let $G$ be a gucycle for $n$-vertex threshold graphs constructed in this way. By taking consecutive $n$-windows of $G$, we obtain a cyclic ordering of threshold graphs such that the consecutive pairs of threshold graphs overlap when ordered according to their binary representations. Taking the binary representation of each threshold graph in the cyclic ordering yields a unique cyclic ordering of the words in $\set{0,1}^{n-1}$ such that consecutive pairs of words of length $n-1$ overlap (i.e. the suffix of length $n-2$ of one word is the prefix of length $n-2$ of the next word). This ordering is a De Bruijn cycle for $\set{0,1}^{n-1}$. Thus no two distinct De Bruijn cycles for $\set{0,1}^{n-1}$ yield the same gucycle under our construction. By Theorem \ref{thm:DBcount}, there are $2^{2^{n-2}-(n-1)}$ De Bruijn cycles for $\set{0,1}^{n-1}$.
	\end{proof}
	
	Another way of viewing Theorem \ref{thm:thresholdgucycle} is that the subgraph of $D_n$ induced by the edges representing the $n$-vertex threshold graphs, labeled in the order of their binary encodings, is isomorphic to the De Bruijn graph for $\set{0,1}^{n-2}$ (whose edges are the binary words of length $n-1$). This subgraph of $D_n$ contains no twins, so we cannot create gupcycles through compression in the same way as for labeled graphs.
	
	\subsection{Gupcycles and Gupwords for Threshold Graphs}
	
	Extending the definitions from Section \ref{subsec:partial}, in this section, a \emph{diamond edge set} $\mathcal{D}$ for a graph $G = (V,E)$ is a family of disjoint subsets of $\binom{V}{2}\setminus E$. Each subset $D \in \mathcal{D}$ is thought of as being either all present or all absent. A \emph{partial graph} is an ordered triple $(V, E, \mathcal{D})$ where $G=(V,E)$ is a graph and $\mathcal{D}$ is a diamond edge set for $G$.
	We say graph $H$ is \emph{contained} in a (cyclically) ordered partial graph $G=(V, E, \mathcal{D})$ if $H$ is isomorphic as an ordered graph to a window of $G_{\mathcal{S}}$ for some $\emptyset \subseteq \mathcal{S} \subseteq \mathcal{D}$, where $G_{\mathcal{S}}$ is the (cyclically) ordered graph whose vertex set is $V$ and whose edge set is $E\cup (\cup_{S \in \mathcal{S}} S)$. A \emph{graph universal partial word (cycle)}, or \emph{gupword} \emph{(gupcycle)}, for a family of graphs $\mathcal{F}$ is a (cyclically) ordered partial graph $G$ that contains each graph in $\mathcal{F}$ exactly once. In the case where each of the elements of $\mathcal{D}$ is a singleton set consisting of one pair of vertices, these definitions are the same as Definitions \ref{def:partialgraph}, \ref{def:partialgraphcontain}, and \ref{def:gupcycle}.
	
	\begin{prop}\label{prop:gupword threshold} For every upword for $\set{0,1}^{n-1}$, there exists a gupword for $n$-vertex threshold graphs of the same length.
	\end{prop}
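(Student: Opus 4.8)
The plan is to mirror the construction of Theorem~\ref{thm:thresholdgucycle}, replacing the De Bruijn cycle by the given upword and letting each ``do not know'' symbol become a \emph{generalized} diamond edge set. Let $w$ be an upword for $\set{0,1}^{n-1}$, and recall the bijection between $T_n$ and $\set{0,1}^{n-1}$ via binary representations. I would build an ordered partial graph $G$ whose vertices are indexed by the positions of $w$ (together with one initial ``seed'' vertex, reconciled with the length at the end). For each position $i$: if $w_i=1$, add the edges $\set{i,j}$ for every $j$ with $i-(n-1)\le j\le i-1$ and $j\ge 1$; if $w_i=0$, add nothing; and if $w_i=\diamond$, collect exactly those pairs $\set{i,j}$ into a single diamond set $D_i$, using the generalized diamond edge sets introduced at the start of this subsection, so that the whole back-neighborhood of vertex $i$ is toggled on or off as a unit.

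Next I would verify that $\mathcal{D}=\set{D_i : w_i=\diamond}$ is a legitimate diamond edge set, i.e.\ a family of pairwise disjoint subsets of $\binom{V(G)}{2}\setminus E(G)$: disjointness is immediate since every pair in $D_i$ has $i$ as its larger endpoint, so distinct indices give disjoint sets, and $w_i=\diamond$ forces $w_i\neq 1$, so none of these pairs is an edge. The heart of the argument, and the step I expect to be the main obstacle, is the window correspondence: for each starting position $p$, the $n$-window $G[p,\ldots,p+n-1]$, under any choice of which diamond sets are present, is exactly the ordered threshold graph whose binary representation is the corresponding specialization of the length-$(n-1)$ window of $w$ at positions $p+1,\ldots,p+n-1$. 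The delicate point is that a single diamond set $D_i$ appears in up to $n-1$ overlapping windows; in each window in which $i$ is not the first vertex, the restriction of $D_i$ is \emph{exactly} the set of pairs joining $i$ to all earlier window vertices (any tail $\set{i,j}$ with $j<p$ lies outside the window), so toggling $D_i$ consistently makes $i$ dominating or isolated in every such window, matching specializing $w_i$ to $1$ or $0$. This is precisely why a multi-pair diamond set, rather than a single diamond edge, is required. Since binary representations are a bijection, distinct specializations inside one window give distinct threshold graphs, so each $n$-window of $G$ covers exactly the threshold graphs corresponding to the words covered by the matching window of $w$.

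Finally I would deduce the ``exactly once'' property and reconcile the length. Because $w$ covers each $b\in\set{0,1}^{n-1}$ exactly once, the window correspondence furnishes a bijection between the (window, specialization) pairs of $w$ realizing $b$ and those of $G$ realizing the threshold graph $T$ with representation $b$; as the former number exactly one, so does the latter, and every window of $G$ is itself a threshold graph, so $G$ is a gupword for $T_n$. The length claim then follows from the fact that the $n$-windows of $G$ correspond bijectively and in order to the $(n-1)$-windows of $w$; I would use this bijection to pin $\abs{V(G)}$ against $\abs{w}$, the only point requiring care being the boundary behavior at the two ends of the (linear) word, where the seed vertex reconciles the length-$n$ graph windows with the length-$(n-1)$ word windows.
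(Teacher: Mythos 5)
Your proposal matches the paper's proof essentially step for step: the paper builds $G$ on vertex set $\set{0,1,\dots,\abs{w}}$ with $I_i=\set{\set{i,j}\in\binom{V}{2} : \max\set{0,i-(n-1)}\le j\le i-1}$, takes $E=\cup_{w_i=1}I_i$ and $\mathcal{D}=\set{I_i : w_i=\diamond}$ using the same generalized diamond edge sets, and verifies the same two-directional window correspondence between coverings of $b$ in $w$ and containments of the threshold graph $T$ in $G$. The only discrepancy is your side condition $j\ge 1$, which (with the initial seed vertex labeled $0$) would omit the edges from the first few positions back to the seed and spoil the earliest windows; the paper's lower bound $\max\set{0,i-(n-1)}$ is what you want there, and otherwise your sketch is the paper's argument.
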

	
	\begin{proof}
		Let $w$ be an upword for $\set{0,1}^{n-1}$. Construct a partial graph $G$ on vertex set $V=\set{0,1,\dots,\abs{w}}$ as follows.  Let $I_i = \set{\set{i,j} \in \binom{V}{2} : \max\set{0,i-(n-1)} \le j \le i-1}$ for all $i\in[\abs{w}]$. Define the edge set as $E = \cup_{w_i = 1} I_i$ and the diamond edge set as $\mathcal{D} = \set{I_i : w_i = \diam}$. Comparing this definition of $G$ to the definition of threshold graph, we see that if an $n$-vertex graph $H$ is contained in $G$ then $H$ is a threshold graph.
		
		We will show $G$ is a gupword for $n$-vertex threshold graphs. Let $T$ be an $n$-vertex threshold graph and $b\in\set{0,1}^{n-1}$ its binary representation. We will show that the word $b$ is covered in $w$ starting at position $\ell$ if and only if the graph $T$ is contained in $G$ starting at position $\ell-1$.
		
		Let $\ell$ be the starting position of $b$ in $w$, and let $J_b = \set{I_i: w_i~=~\diam,\text{ } \ell \le i \le \ell+n-2,\text{ and } b_{i-\ell+1}=1} \subseteq \mathcal{D}$. Then the $n$-window starting at position $\ell-1$ of $G_{J_b}$, the graph on vertex set $V$ and edge set $E\cup\left(\bigcup_{I_i\in J_b} I_i\right)$, is exactly $T$ because it is constructed by successively adding dominating and isolated vertices according to the $0$'s and $1$'s of $b$.
		
		On the other hand, suppose $T$ is contained in an $n$-window of $G$. We define $\ell$ to be the index of the second position of the $n$-window, so that $T$ begins at index $\ell-1$. Let $J_T$ be the collection of sets $I_i$ such that $T$ is isomorphic as an ordered graph to this $n$-window of $G_{J_T}$. Recall the edges in the $n$-window of $G_{J_T}$ starting at $\ell-1$ are exactly the edges in $I_i$ for every $i \in \set{\ell-1,\ell,\ldots,\ell+n-2}$ such that either $w_i=1$, or both $w_i = \diam$ and $I_i \in J_T$. The binary representation $b$ of $T$ is covered in $w$ starting at position $\ell$ because $b$ has a $0$ exactly where $T$ has a (successive) isolated vertex and a $1$ exactly where $T$ has a (successive) dominating vertex. 
	\end{proof}
	
	Proposition \ref{prop:gupword threshold} is useful because binary upwords are known to exist for all $n \ge 2$. 
	
	\begin{theorem}[Theorems 9, 10, and 13 in \cite{CKMS17}]
		For every $n \ge 2$, there is an upword for $\set{0,1}^n$ that contains exactly one $\diam$. For every $n \ge 4$, there is an upword for $\set{0,1}^n$ that contains exactly two $\diam$'s.
	\end{theorem}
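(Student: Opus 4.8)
The plan is to realize each desired upword as an Eulerian path in the binary De Bruijn graph $B$ whose vertices are the words in $\set{0,1}^{n-1}$ and whose edges are the words in $\set{0,1}^n$, with an edge $x_1\cdots x_n$ running from $x_1\cdots x_{n-1}$ to $x_2\cdots x_n$. Reading the edges of a walk as overlapping length-$n$ windows turns walks into partial words, and a walk using every edge exactly once is precisely a universal word for $\set{0,1}^n$. The key observation is the effect of a $\diam$: a window $\diam\,x_2\cdots x_n$ (a $\diam$ at the left end of the window) covers the two words $0x_2\cdots x_n$ and $1x_2\cdots x_n$, i.e.\ the two in-edges of the vertex $x_2\cdots x_n$, and symmetrically a window $x_1\cdots x_{n-1}\,\diam$ covers the two out-edges of $x_1\cdots x_{n-1}$. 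So inserting a $\diam$ lets a single window account for a pair of edges sharing an endpoint, and I would build the upword by deleting exactly the edges that the $\diam$-windows absorb and taking an Eulerian path over what remains.

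For the single-$\diam$ statement I would place the $\diam$ at the very first position, so that it lies in only the first window. Take the vertex $v=0^{n-1}$ and delete its two in-edges: the loop $0^n$ and the edge $10^{n-1}$ (which runs from $10^{n-2}$ to $0^{n-1}$). Because one deleted edge is a loop, a degree count shows that in the remaining multigraph every vertex is balanced except $0^{n-1}$, whose out-degree now exceeds its in-degree by $1$, and $10^{n-2}$, whose in-degree exceeds its out-degree by $1$. Adjoining a virtual edge from $10^{n-2}$ to $0^{n-1}$ restores balance while merely reinstating a parallel copy of $10^{n-1}$, so the graph stays strongly connected; hence there is an Eulerian path from $0^{n-1}$ to $10^{n-2}$. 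Reading it off yields a word $u$ beginning with $0^{n-1}$, and I set $w=\diam\,u$. Its first window $\diam\,0^{n-1}$ covers exactly the two deleted edges $0^n$ and $10^{n-1}$, while the remaining windows are those of $u$ and cover every other word once; thus $w$ is an upword with exactly one $\diam$ (of length $2^n+n-2$).

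The two-$\diam$ statement is the hard part, and the obstruction is already visible from the single-window bookkeeping above. A $\diam$ lies in exactly one window only when it occupies the very first or very last position, so a naive ``one $\diam$ per end'' attempt deletes the two in-edges of some vertex $a$ and the two out-edges of some vertex $b$. A short case analysis shows that the endpoints of these four deleted edges can never be made to coincide, so this deletion always leaves four imbalanced vertices (two sources and two sinks), which violates the Eulerian-path degree condition. Consequently at least one of the two diamonds must sit in the interior, where it lies in several windows simultaneously; across its windows such a $\diam$ absorbs a whole structured family of edges rather than a single shared pair.

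The plan for two diamonds is therefore to design a local pattern around an interior $\diam$, together with a second (endpoint) $\diam$, so that the absorbed edges, taken with the ordinary windows, tile $\set{0,1}^n$ exactly once, and then to verify that the leftover edges admit an Eulerian path with the correct endpoints. The delicate point---and the reason the hypothesis tightens to $n\ge 4$---is the exactly-once condition: one must check that the several windows passing through the interior $\diam$ neither recover any word twice nor collide with words covered elsewhere, which requires enough room in the surrounding letters (hence $n\ge 4$) to separate those windows. I expect this verification, rather than the existence of the Eulerian path, to be the main obstacle, and it is precisely here that the construction must be tailored carefully, as in the cited work of \cite{CKMS17}.
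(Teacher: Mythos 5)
First, a framing note: the paper does not prove this statement itself---it is imported verbatim from \cite{CKMS17} (Theorems 9, 10, and 13 there)---so there is no in-paper proof to compare yours against, and your proposal has to stand on its own. For the first sentence it does. Deleting the loop $0^n$ and the edge $10^{n-1}$ from the binary De Bruijn graph leaves $0^{n-1}$ with out-degree exceeding in-degree by one and $10^{n-2}$ with the reverse imbalance, all other vertices balanced; connectivity survives because you removed only a loop and an edge whose parallel copy you virtually reinstate; and prepending $\diam$ to the word of the resulting Eulerian path makes the single window $\diam 0^{n-1}$ cover exactly the two deleted words $0^n$ and $10^{n-1}$, each of which appears nowhere else. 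That is a complete and correct proof of the one-$\diam$ claim (and it is essentially the standard modified-De-Bruijn-graph argument).

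The two-$\diam$ half, however, is not proved. Your obstruction analysis is sound---for the four deleted edges to avoid a $\pm 2$ imbalance, both absorbing vertices must carry loops, forcing $\set{a,b}=\set{0^{n-1},1^{n-1}}$, and then the residual imbalances at $10^{n-2}$ and $1^{n-2}0$ fail to cancel once $n\ge 4$---but ruling out the naive placement is where your argument stops. Everything that would actually establish existence is missing: the position of the interior $\diam$, the explicit list of words absorbed by the several windows through it, the verification that those words are pairwise distinct and disjoint from the words covered by the ordinary windows, and the check that the leftover edge set still admits an Eulerian path with the correct endpoints. You acknowledge this yourself by deferring to ``a local pattern to be designed \ldots as in the cited work,'' which is to say the construction the theorem asserts is left unexhibited. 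As written, the second sentence is a plan plus an appeal to \cite{CKMS17}, not a proof; to close the gap you would need to write down a concrete pattern for the two diamonds and carry out the exactly-once bookkeeping explicitly, and only then does the hypothesis $n\ge 4$ earn its keep.
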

	
	Chen et al. further conjectured that for every $n \ge 2$, and any given position $k$ in the upword, there exists an upword for $\set{0,1}^n$ containing exactly one $\diam$ at position $k$, with a few known exceptions. They verified the conjecture computationally for $2 \le n \le 13$ and provided tables of examples of binary upwords with a single $\diam$. Proposition \ref{prop:gupword threshold} yields a gupword from each of these known upwords.
	
	\begin{example} Applying Proposition \ref{prop:gupword threshold} to the upword $01100\diam011110100$ for $\set{0,1}^4$ produces a gupword for $5$-vertex threshold graphs as shown in Figure \ref{fig:5threshold1}.
		
		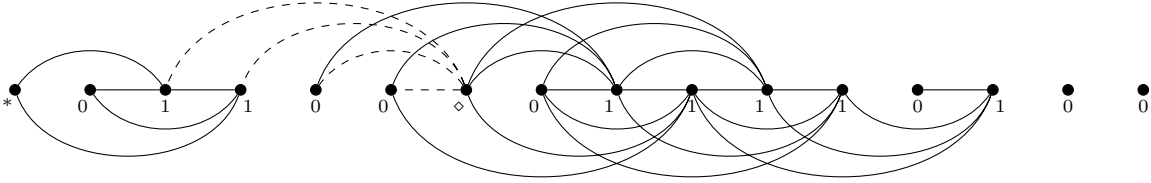
\begin{figure}[h!]
			\centering
			
			\tikzstyle{every node}=[circle, draw, fill=black, inner sep=0pt, minimum width=4pt]
			\begin{tikzpicture}
				
				\node[label={[shift={(-.1,-0.4)}]\scriptsize{*}}] (1) {};
				\node[label={[shift={(-.1,-0.4)}]\scriptsize{0}}] (2) [right of=1] {};
				\node[label={[shift={(0,-0.4)}]\scriptsize{1}}] (3) [right of=2] {};
				\node[label={[shift={(.1,-0.4)}]\scriptsize{1}}] (4) [right of=3] {};
				\node[label={[shift={(0,-0.4)}]\scriptsize{0}}] (5) [right of=4] {};
				\node[label={[shift={(-.1,-0.4)}]\scriptsize{0}}] (6) [right of=5] {};
				\node[label={[shift={(-.1,-0.4)}]\scriptsize{$\diam$}}] (7) [right of=6] {};
				\node[label={[shift={(-.1,-0.4)}]\scriptsize{0}}] (8) [right of=7] {};
				\node[label={[shift={(-.1,-0.4)}]\scriptsize{1}}] (9) [right of=8] {};
				\node[label={[shift={(0,-0.4)}]\scriptsize{1}}] (10) [right of=9] {};
				\node[label={[shift={(-.1,-0.4)}]\scriptsize{1}}] (11) [right of=10] {};
				\node[label={[shift={(0,-0.4)}]\scriptsize{1}}] (12) [right of=11] {};
				\node[label={[shift={(0,-0.4)}]\scriptsize{0}}] (13) [right of=12] {};
				\node[label={[shift={(.1,-0.4)}]\scriptsize{1}}] (14) [right of=13] {};
				\node[label={[shift={(0,-0.4)}]\scriptsize{0}}] (15) [right of=14] {};
				\node[label={[shift={(0,-0.4)}]\scriptsize{0}}] (16) [right of=15] {};
				\draw
				(4) to (3) to (2)
				(12) to (11) to (10) to (9) to (8)
				(14) to (13)
				
				(3) to[bend left=-55] (1)
				(4) to[bend left=55] (2)
				(4) to[bend left=70] (1)
				
				(9) to[bend left=-55] (7)
				(9) to[bend left=-70] (6)
				(9) to[bend left=-70] (5)
				
				(10) to[bend left=55] (8)
				(10) to[bend left=70] (7)
				(10) to[bend left=70] (6)
				
				(11) to[bend left=-55] (9)
				(11) to[bend left=-70] (8)
				(11) to[bend left=-70] (7)
				
				(12) to[bend left=70] (9)
				(12) to[bend left=70] (8)
				(12) to[bend left=55] (10)
				
				(14) to[bend left=55] (12)
				(14) to[bend left=70] (11)
				(14) to[bend left=70] (10);
				
				\draw[dashed]
				(7) to (6)
				(7) to[bend left=-55] (5)
				(7) to[bend left=-70] (4)
				(7) to[bend left=-70] (3);
				
			\end{tikzpicture}   
			
			\caption{Gupword corresponding to the upword $01100\diam011110100$ for $\set{0,1}^4$.}
			\label{fig:5threshold1}
		\end{figure}
	\end{example}
	
	The following theorem also gives a general construction of a binary upword for every $n$.
	
	\begin{theorem}[Theorem 17 in \cite{CKMS17}]\label{thm:17}
		For every $n \ge 2$, $\diamond^{n-1}01^n$ is an upword for $\set{0,1}^{n}$.
	\end{theorem}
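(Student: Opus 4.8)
The plan is to analyze directly the $n+1$ length-$n$ windows of $w := \diamond^{n-1}01^n$ and show they cover each binary word of length $n$ exactly once. First I would record that $w$ has length $2n$, so its windows begin at positions $1,2,\ldots,n+1$. The unique letter $0$ sits at position $n$, the diamonds occupy positions $1$ through $n-1$, and the letter $1$ occupies positions $n+1$ through $2n$.

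Next I would compute the shape of each window. For $1 \le i \le n$, the window starting at position $i$ contains the $0$ (since $i \le n \le i+n-1$); within the window it is preceded by the diamonds in positions $i,\ldots,n-1$ and followed by the $1$'s in positions $n+1,\ldots,i+n-1$, so this window equals $\diamond^{\,n-i}01^{\,i-1}$. The last window, starting at position $n+1$, lies entirely in the block of $1$'s and equals $1^n$.

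Then I would identify what each window covers. The window $\diamond^{\,n-i}01^{\,i-1}$ covers exactly those $u \in \set{0,1}^n$ whose length-$i$ suffix equals $01^{\,i-1}$, with the diamonds forced to the prefix $u_1\cdots u_{n-i}$ (so each such $u$ is covered \emph{once} by this window), while the window $1^n$ covers only the all-ones word. The heart of the argument is a clean bijection via the position of the rightmost $0$: every $u \in \set{0,1}^n$ is either $1^n$ or has a rightmost $0$, say at position $p$, in which case $u = u_1\cdots u_{p-1}\,0\,1^{\,n-p}$. Setting $i = n-p+1$ (so $1 \le i \le n$), this $u$ matches window $i$, which I would confirm by the suffix characterization, and no other: windows $j < i$ demand a $0$ strictly to the right of position $p$, impossible since those positions are all $1$; windows $j > i$ demand $u_p = 1$, contradicting $u_p = 0$; and $1^n$ fails because $u \ne 1^n$. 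Dually, $1^n$ is covered only by the last window.

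The main obstacle is precisely this uniqueness check—verifying that no word is covered by two windows—and I expect the rightmost-$0$ statistic to give the cleanest route, since it assigns each word to exactly one window and hence a perfect matching between the $2^n$ words and their coverage slots. As a consistency check, window $i$ covers $2^{\,n-i}$ words, so the $n$ windows of the form $\diamond^{\,n-i}01^{\,i-1}$ together account for $\sum_{i=1}^{n} 2^{\,n-i} = 2^n - 1$ words, plus the single all-ones word from the last window, giving exactly $2^n$ with no repetition. This confirms that $\diamond^{n-1}01^n$ covers each word of $\set{0,1}^n$ exactly once and is therefore an upword.
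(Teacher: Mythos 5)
Your proof is correct. Note that the paper does not prove this statement at all---it is imported verbatim as Theorem 17 of \cite{CKMS17}---so there is no in-paper argument to compare against; your direct window analysis (the $n{+}1$ windows are $\diamond^{n-i}01^{i-1}$ for $1\le i\le n$ together with $1^n$, and the position of the rightmost $0$ assigns each word of $\set{0,1}^n$ to exactly one window) is essentially the standard proof given in the cited source, and the counting check $\sum_{i=1}^{n}2^{n-i}+1=2^n$ confirms it.
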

	
	We can use Theorem \ref{thm:17} to construct a gupword for $n$-vertex threshold graphs for any $n \ge 3$.
	
	\begin{example}
		Figure \ref{fig:5threshold} shows a gupword for $5$-vertex threshold graphs constructed from the upword $\diam \diam \diam 01111$, which is an upword for $\set{0,1}^{4}$ by Theorem \ref{thm:17}.
		
		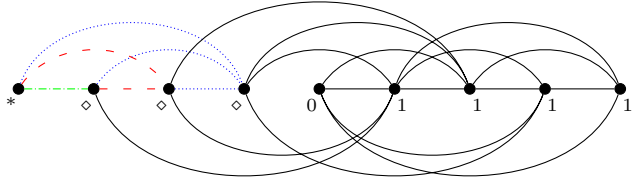
\begin{figure}[h!]
			\centering
			\tikzstyle{every node}=[circle, draw, fill=black, inner sep=0pt, minimum width=4pt]
			\begin{tikzpicture}
				
				\node[label={[shift={(-.1,-0.4)}]\scriptsize{*}}] (1) {};
				\node[label={[shift={(-.1,-0.4)}]\scriptsize{$\diamond$}}] (2) [right of=1] {};
				\node[label={[shift={(-.1,-0.4)}]\scriptsize{$\diamond$}}] (3) [right of=2] {};
				\node[label={[shift={(-.1,-0.4)}]\scriptsize{$\diamond$}}] (4) [right of=3] {};
				\node[label={[shift={(-.1,-0.4)}]\scriptsize{0}}] (5) [right of=4] {};
				\node[label={[shift={(.1,-0.4)}]\scriptsize{1}}] (6) [right of=5] {};
				\node[label={[shift={(.1,-0.4)}]\scriptsize{1}}] (7) [right of=6] {};
				\node[label={[shift={(.1,-0.4)}]\scriptsize{1}}] (8) [right of=7] {};
				\node[label={[shift={(.1,-0.4)}]\scriptsize{1}}] (9) [right of=8] {};
				
				\draw
				(5) to (6)
				(6) to (7)
				(7) to (8)
				(8) to (9);
				
				\draw[densely dashdotted, green]
				(1) to (2);
				
				\draw[densely dotted, blue]
				(1) to[bend left=70] (4)
				(3) to (4)
				(2) to[bend left=55] (4);
				
				\draw
				(2) to[bend left=-70] (6)
				(3) to[bend left=-70] (6)
				(3) to[bend left=70] (7)
				(4) to[bend left=55] (6)
				(4) to[bend left=70] (7)
				(4) to[bend left=-70] (8)
				(5) to[bend left=55] (7)
				(5) to[bend left=-70] (8)
				(5) to[bend left=-70] (9)
				(6) to[bend left=55] (8) 
				(6) to[bend left=70] (9)
				(7) to[bend left=55] (9);
				
				\draw[loosely dashed, red]
				(1) to[bend left=55] (3)
				(2) to (3);
				
			\end{tikzpicture}
			\caption{The gupword for $5$-vertex threshold graphs corresponding to $\diam \diam \diam 01111$. The diamond edges of the same color are thought of as all present or all absent.}
			\label{fig:5threshold}
		\end{figure}
	\end{example}
	
	\begin{prop}\label{prop:thresholdgupcycle} For every upcycle for $\set{0,1}^{n-1}$ of length greater than $2(n-1)$, there exists a gupcycle for $n$-vertex threshold graphs of the same length.
	\end{prop}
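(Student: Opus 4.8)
The plan is to adapt the construction in the proof of Theorem~\ref{thm:thresholdgucycle}, which handles the non-partial cyclic case, by layering in the diamond-edge-set machinery used for the gupword in Proposition~\ref{prop:gupword threshold}. Let $w$ be an upcycle for $\set{0,1}^{n-1}$ with $\abs{w} = L > 2(n-1)$. First I would build a cyclically ordered partial graph $G$ directly on vertex set $[L]$, with the standard cyclic order. For each $i \in [L]$, let
\[
	I_i = \set{\set{i,j} : j \in \set{i-1, i-2, \ldots, i-(n-1)} \bmod L}
\]
be the set of pairs joining $i$ to its $n-1$ cyclic predecessors, and then set $E(G) = \bigcup_{w_i = 1} I_i$ and $\mathcal{D}(G) = \set{I_i : w_i = \diamond}$, leaving every remaining pair a non-edge. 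This is the cyclic analogue of the partial graph built in Proposition~\ref{prop:gupword threshold}, with vertex $i$ playing the role of the $i$th letter of the (now cyclic) word.

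Next I would check that $G$ is a well-defined partial graph, which is where the hypothesis $L > 2(n-1)$ first enters. A pair $\set{a,b}$ lies in $I_i$ precisely when its other endpoint is one of the $n-1$ cyclic predecessors of $i$. The hypothesis $L > 2(n-1)$ guarantees that no pair has cyclic distance at most $n-1$ in both directions, so for each pair at most one of its endpoints has the other among its $n-1$ predecessors; hence each relevant pair lies in exactly one $I_i$. Consequently the sets $I_i$ are pairwise disjoint, so $E(G)$ and the members of $\mathcal{D}(G)$ are disjoint, and $G$ is a genuine partial graph in the sense of Section~\ref{sec:threshold}.

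Finally I would verify that the $n$-windows of $G$ realize exactly the threshold graphs covered by the length-$(n-1)$ windows of $w$, and conclude. Fixing an assignment of present/absent to the diamond sets corresponds to replacing each $\diamond$ in $w$ by a $0$ or a $1$; under this correspondence the $n$-window $G[i, i+1, \ldots, i+n-1]$ is built from seed vertex $i$ by adding vertices $i+1, \ldots, i+n-1$ as isolated or dominating exactly according to $w_{i+1}, \ldots, w_{i+n-1}$, so by the bijection between $T_n$ and $\set{0,1}^{n-1}$ it ranges over precisely the threshold graphs whose binary representations are covered by that window of $w$. Because $w$ is an upcycle, each word of $\set{0,1}^{n-1}$ is covered exactly once, so each $n$-vertex threshold graph is contained in $G$ exactly once, and $G$ is a gupcycle of length $L$. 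I expect the main obstacle to be this last verification: I must use $L > 2(n-1)$ a second time to ensure that within each $n$-window the vertex order is unambiguous and that no ``long'' pair at cyclic distance greater than $n-1$ intrudes, so that each window faithfully encodes its binary window of $w$ with no spurious or missing edges---the same role played by the condition $n \ge 4$ in Theorem~\ref{thm:thresholdgucycle}. An alternative route is to append $w_1 \cdots w_{n-2}$ to obtain an upword, apply Proposition~\ref{prop:gupword threshold}, and then cyclically glue via Lemma~\ref{lem:cyclicgluing}; however, that lemma is phrased for singleton diamond edges while threshold gupwords carry diamond edge sets, so the direct construction above avoids an extra justification.
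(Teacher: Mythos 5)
Your proposal is correct and matches the paper's approach: the paper's proof is exactly the sketch you give, namely redoing the construction of Proposition~\ref{prop:gupword threshold} on vertex set $[\,\abs{w}\,]$ with all arithmetic done modulo $\abs{w}$, with the hypothesis $L > 2(n-1)$ playing the same role you identify (guaranteeing that no pair is claimed by two different $I_i$'s and that each $n$-window faithfully encodes its binary window). Your fleshed-out verification of well-definedness and of the window correspondence is a more detailed write-up of what the paper leaves as ``follows similarly.''
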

	
	The proof of Proposition \ref{prop:thresholdgupcycle} follows similarly to that of Proposition \ref{prop:gupword threshold}. The main distinction is the partial graph $G$ is on vertex set $\{1,\dots,|w|\}$ and all addition is done $\mod{|w|}$. The length restriction ensures that the cycling gluing does not change the $n$-windows, as shown in Lemma \ref{lem:cyclicgluing}.
	
	Question 6.2 of \cite{G18} asks whether all upcycles have exactly one $\diam$ in each $n$-window. If so, that would imply that every binary upcycle for $\set{0,1}^{n-1}$ has length $2^{n-2}$, and so the length condition of Proposition \ref{prop:thresholdgupcycle} would be satisfied for all $n \ge 6$. By Proposition 4.7 of \cite{G18}, the length of every binary upcycle for $\set{0,1}^{n-1}$ is greater than $2^{\sqrt{(n-1)-7/4}+1/2}$, which implies that the length condition of Proposition \ref{prop:thresholdgupcycle} is satisfied for all $n \ge 33$.
	
	The known examples of upcycles for $\set{0,1}^{n-1}$ all have either $n=5$ (in which case the upcycle length must be $8$, and the length condition of Proposition \ref{prop:thresholdgupcycle} is not satisfied) or $n=9$ and length $128$. Binary upcycles of the latter type are shown to exist and listed in \cite{FGKMM21}. By Proposition \ref{prop:thresholdgupcycle} they can be used to construct gupcycles for $9$-vertex threshold graphs. These gupcycles have $16$ diamond edge sets. At $128$ vertices, they are too long to be drawn here.
	
	\section{Permutations}\label{sec:perms}
	
	Universal cycles and their variants for permutations have been widely studied. In this section we show that these results translate nicely to the setting of graph universal cycles for labeled permutation graphs. We will show a bijection explaining the connection in Section \ref{subsec:permbijn}. Then we will extend this correspondence in two ways: in Section \ref{subsec:permgupcycle}, to compressed forms of both universal cycles and graph universal cycles, which represent every permutation (graph) even more compactly by covering more than one element per window, and in Section \ref{subsec:socycle}, to $s$-overlap forms of both. Section \ref{subsec:permfuture} demonstrates new ways to shorten universal words, which also correspond to graph universal words.
	
	\subsection{Introduction to universal cycles and graph universal cycles for permutations}\label{subsec:permintro}
	
	\subsubsection{Permutations, order-isomorphism, and universal cycles for permutations} 
	We will write $S_n(A)$ for the set of ordered $n$-tuples of distinct elements of a totally ordered set $A$. Thus $S_n = S_n([n])$.
	Two permutations $\sigma, \tau \in S_n(A)$ are \emph{order-isomorphic}, denoted by $\sigma \cong \tau$, if, for every $i, j \in [n]$, we have $\sigma(i) < \sigma(j)$ if and only if $\tau(i) < \tau(j)$. Each permutation $\pi \in S_n(A)$ is order-isomorphic to some permutation in $S_n$, which we denote by $\red(\pi)$. Equivalently two permutations are order-isomorphic if $\red(\sigma) = \red(\tau)$. We often consider permutations as words and write $\sigma_i$ for $\sigma(i)$.
	
	Universal cycles for $n$-permutations are cyclic sequences of numbers in which every permutation on $[n]$ is order-isomorphic to exactly one $n$-window; see~\cite{CDG92}. For example, $124324$ is a universal cycle for $3$-permutations because its $3$-windows, from left to right, are order-isomorphic to the permutations $123$, $132$, $321$, $213$, $231$, and $312$. Universal cycles for $n$-permutations exist for every $n$ and in \cite{J09} were shown to exist on $n+1$ distinct symbols.
	
	Universal cycles for permutations have been studied using the clustered graph of overlapping permutations, an analogue of the De Bruijn graph. We define a more general version, where the permutations overlap by $s$, to use in Section \ref{subsec:socycle}.
	
	\begin{definition}\label{def:permoverlap}
		An ordered pair of permutations $(\sigma, \tau) \in S_n^2$ \emph{overlaps by $s$} if $\sigma_{n-s+1}\cdots\sigma_n \cong \tau_1\cdots\tau_s$. An ordered pair of permutations $(\sigma, \tau)$ \emph{overlaps} if it overlaps by $n-1$.
	\end{definition}
	
	\begin{definition}
		The \emph{clustered graph of $s$-overlapping $n$-permutations}, denoted by $\mathcal{O}(n,s)$, is a directed multigraph on vertex set $S_{s}$. For each $\pi \in S_n$, there is a directed edge $(\red(\pi_1\cdots\pi_{s}),\red(\pi_{n-s+1}\cdots\pi_{n}))$. Thus we identify the edge set with $S_n$. The \emph{clustered graph of overlapping $n$-permutations}, denoted by $\mathcal{O}(n)$, is $\mathcal{O}(n,n-1)$, the graph of $(n-1)$-overlapping $n$-permutations. 
	\end{definition}

	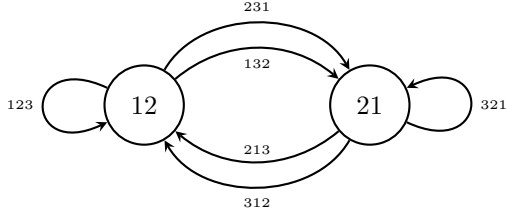
\begin{figure}[h!]
		\centering
		\begin{tikzpicture}
			\node[circle, draw, thick, fill=white, inner sep=0pt, minimum width=30pt] (0) at (.25,0) {12};
			\node[circle, draw, thick, fill=white, inner sep=0pt, minimum width=30pt] (00) at (3.25,0) {21};

			\draw[->, -stealth, thick] (00) to [bend right=-40] (0);
			\draw[->, -stealth, thick, bend left=60]
			(00) to (0);
			\draw[->, -stealth, thick, bend left=40]
			(0) to (00);
			\draw[->, -stealth, thick, bend left=60]
			(0) to (00);

			\draw[thick, ->, -stealth] (0) to [out=155,in=205,looseness=7] (0);
			
			\draw[thick, ->, -stealth] (00) to [out=-25,in=25,looseness=7] (00);

			\node[white, label={\tiny 231}] (a) at (1.75, 1) {};
			
			\node[white, label={\tiny 312}] (d) at (1.75, -1.6) {};
			
			\node[white, label={\tiny 132}] (k) at (1.75, .25) {};
			
			\node[white, label={\tiny 213}] (h) at (1.75, -.9) {};
			
			\node[white, label={\tiny 123}] (n) at (-1.4, -.3) {};
			
			\node[white, label={\tiny 321}] (w) at (4.9, -.3) {};
		\end{tikzpicture}
		\caption{The clustered graph of overlapping $3$-permutations.}
		\label{fig:permoverlap}
	\end{figure}
	
	For $\sigma, \tau \in S_n$, $(\sigma,\tau)$ is a path in the clustered graph of $s$-overlapping permutations if and only if $(\sigma, \tau)$ overlaps by $s$. Given a universal cycle for $n$-permutations, its sequence of $n$-windows gives a cyclic ordering of $S_n$ such that every consecutive pair overlaps. This ordering in turn can be viewed as an Euler tour in the clustered graph of overlapping permutations that covers $S_n$ in the same order. However, the following conjecture remains open.
	
	\begin{conjecture}[Conjecture 3.1.3 in \cite{H90}, \cite{CDG92}]\label{conj:cycleclosing}
		Given any Euler tour of the clustered graph of overlapping permutations (i.e., an ordering of $S_n$ such that each consecutive pair of permutations overlaps), there is a universal cycle for permutations that covers $S_n$ in the same order.
	\end{conjecture}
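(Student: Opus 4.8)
The plan is to reformulate the conjecture as a question about the consistency of a single global order-constraint system, and then to try to prove that system is always consistent. Fix an Euler tour of $\mathcal{O}(n)$ and let $\pi^{(1)}, \pi^{(2)}, \ldots, \pi^{(N)}$ with $N = n!$ be the resulting cyclic sequence of permutations, so each consecutive pair overlaps by $n-1$. A universal cycle realizing this tour is a cyclic word $w = w_1 w_2 \cdots w_N$ whose $i$-th $n$-window $w_i w_{i+1} \cdots w_{i+n-1}$ (indices modulo $N$) is order-isomorphic to $\pi^{(i)}$ for every $i$. First I would observe that the overlap condition forces the relative order of any two positions at cyclic distance at most $n-1$: such a pair lies inside at least one common $n$-window, and where it lies inside two windows the overlap guarantees those windows agree, so the tour determines a well-defined orientation $p \to q$ (meaning the eventual values must satisfy $w_p < w_q$) on every pair of positions at cyclic distance at most $n-1$. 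Call the resulting directed graph $\Gamma$.

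Next I would reduce existence to acyclicity: a real-valued assignment $w$ realizing the tour exists if and only if $\Gamma$ has no directed cycle, in which case any linear extension of the partial order given by $\Gamma$ yields a valid $w$, which can then be discretized to a universal cycle over finitely many symbols. The \emph{linear} analogue of the statement is immediate: because the gluing of Lemma~\ref{lem:gluing} never creates a forced comparison between positions at distance at least $n$, one can traverse the tour from a fixed starting edge and greedily assign real values, never encountering an obstruction. Thus the entire difficulty is concentrated in the wrap-around, i.e.\ in ruling out a directed cycle of $\Gamma$ that winds around the cyclic word and closes on itself.

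The hard part will be proving that $\Gamma$ is acyclic. Suppose toward a contradiction that there is a directed cycle $p_1 \to p_2 \to \cdots \to p_k \to p_1$ in $\Gamma$, forcing $w_{p_1} < w_{p_2} < \cdots < w_{p_k} < w_{p_1}$, with each consecutive pair of positions at cyclic distance at most $n-1$. I would try to derive a contradiction by a monovariant argument: track how the rank of a distinguished symbol changes as the $n$-window slides one step along the tour, and attempt to show that summing these local rank changes around a closed chain of forced inequalities cannot return to its starting value. A complementary approach would be constructive: build $w$ incrementally as the edges of the tour are read off, maintaining at each stage a realizable partial order on the positions seen so far, and argue that inserting the final closing edge cannot introduce an inconsistency.

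I expect neither of these to go through easily, and this is exactly where the conjecture has resisted proof: local overlap consistency, which is automatic along an Euler tour, does not obviously forbid a chain of forced strict inequalities from wrapping all the way around the cyclic word. One could also try to leverage the existence of \emph{some} universal cycle for $n$-permutations (\cite{CDG92, J09}) together with the structure of the Euler tours of $\mathcal{O}(n)$, seeking to connect an arbitrary tour to a known realizable one by elementary transpositions of the tour and showing each transposition preserves realizability; but there is no a priori reason such transpositions respect the order constraints, so controlling them is itself the central obstacle.
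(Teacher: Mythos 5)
This statement is a \emph{conjecture}, not a theorem: the paper quotes it from Hurlbert's thesis and Chung--Diaconis--Graham and explicitly leaves it open (see Remark~\ref{rem:tourtoword}, which isolates the difficulty as ``closing the cycle''). The paper therefore contains no proof for you to match, and your proposal does not supply one either. Your reformulation is sound as far as it goes: the overlap condition does determine a consistent orientation on every pair of positions at cyclic distance at most $n-1$ (consecutive windows agree on their shared $n-1$ positions, so the orientation propagates), and realizability of the tour by a cyclic word is indeed equivalent to acyclicity of the resulting constraint digraph $\Gamma$, since within each window the forced constraints already form a total order. But the entire content of the conjecture is the acyclicity of $\Gamma$ --- ruling out a chain of forced strict inequalities that wraps around the cycle --- and at exactly that point you offer only two candidate strategies (a monovariant on sliding-window ranks, and incremental construction with a final closing edge) together with an honest admission that you do not expect either to succeed. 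That admission is accurate, and it means the proposal is a framework plus an identification of the obstruction, not a proof.

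It is worth contrasting this with what the paper \emph{does} prove. Theorem~\ref{thm:perm graph bij} establishes the exact analogue for graph universal cycles: every Euler tour of $\mathcal{O}(n)$ yields a gucycle for permutation graphs covering $S_n$ in the same order. The reason the cyclic closing succeeds there but remains open for words is instructive for your setup: in the graph setting, the cyclic gluing of Lemma~\ref{lem:cyclicgluing} simply discards any edge at distance greater than $n-1$, because such edges are irrelevant to the $n$-windows --- there is no global consistency condition to verify. In the word setting, by contrast, every position carries a value that participates in comparisons with \emph{all} positions within distance $n-1$ on both sides, so the wrap-around genuinely couples the two ends; this is precisely the potential directed cycle in your $\Gamma$ that nobody has been able to exclude (or exhibit). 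If you want to make progress, the paper's own observation --- that a counterexample to the conjecture would produce a gucycle ordering achievable by no universal cycle --- suggests the two formulations are not interchangeable, and that your digraph $\Gamma$ is the right object to study; but as submitted, the key step is missing and the statement remains unproven.
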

	
	\begin{remark}\label{rem:tourtoword}
		It is known \cite{CDG92, KPV19} that given any Euler tour (or Euler trail) of the clustered graph of overlapping permutations, there is a universal \emph{word} for permutations that covers $S_n$ in the same order. The difficulty in the conjecture is in closing the cycle. In other words, it is not known whether the first and last $n-1$ symbols of a universal word for $n$-permutations can always be made to be identical, or if they sometimes must take on different, order-isomorphic forms.
	\end{remark}
	
	In contrast, in Section \ref{subsec:permbijn} we show that given any Euler tour, there is a graph universal cycle for permutation graphs that covers $S_n$ in the same order. One implication of our work is that if Conjecture \ref{conj:cycleclosing} is false, then there is a graph universal cycle for permutations that covers the permutations in an order that cannot be achieved by any universal cycle for permutations.
	
	\subsubsection{Graph universal cycles for permutation graphs}\label{sec:permgucycles}
	
	Permutations of $[n]$ can equivalently be represented by \emph{permutation graphs} on vertex set $[n]$. The permutation graph $G(\pi)$ for $\pi \in S_n$ contains the edge $\set{i,j}$, where without loss of generality $i < j$, if and only if $\pi(i) > \pi(j)$. In other words, the edges indicate the inversions of $\pi$.
	
	In \cite{CGGP21}, Cantwell et al. proved the existence of gucycles for permutation graphs on vertex set $[n]$ for every $n$ by defining the following arc digraph and showing that it is Eulerian. 
	
	\begin{definition}
		The \emph{$s$-overlap arc digraph for permutation graphs on $[n]$}, denoted by $P_{n,s}$, is the directed multigraph whose vertices are the permutation graphs on vertex set $[s]$ and whose edge set is the set of permutation graphs on vertex set $[n]$. An edge $e$ points from $u$ to $v$ if and only if $e[1,2,\ldots,s] \oiso u$ and $e[n-s+1, n-s+2, \ldots, n] \oiso v$.
		The \emph{arc digraph for permutation graphs on $[n]$}, denoted by $P_n$, is the $(n-1)$-overlap arc digraph for permutation graphs on $[n]$. Equivalently, $P_n$ is the subgraph of $D_n$ induced by the edges that are the permutation graphs on $[n]$.
	\end{definition}
	
	Cantwell et al. \cite{CGGP21} noted that the permutations of $[n]$ are in bijection with the permutation graphs on vertex set $[n]$, so these gucycles may be understood as representing each permutation of $[n]$ exactly once.

	\subsection{Bijection between Euler tours of the clustered graph of overlapping permutations and graph universal cycles for permutation graphs} \label{subsec:permbijn}
	
	We will demonstrate a bijection between the Euler tours of the clustered graph of overlapping permutations and the graph universal cycles for permutation graphs, which shows that universal cycles for permutations and gucycles for permutation graphs are more closely related than previously known. Given a universal cycle for $n$-permutations, we construct a gucycle for permutation graphs on $[n]$ that covers $S_n$ in the same order. Given a gucycle for permutation graphs, we construct an Euler tour of the clustered graph of overlapping permutations, which yields a universal word for $n$-permutations and---if Conjecture \ref{conj:cycleclosing} holds---also yields a universal cycle for $n$-permutations.
	
	\begin{example} The gucycle in Figure \ref{fig:3permguc} corresponds to the Euler tour of the clustered graph of overlapping permutations given by $123,231,321,312,132,213$.

		\begin{figure}[h!]
			\centering
			\tikzstyle{every node}=[circle, draw, fill=black, inner sep=0pt, minimum width=4pt]
			\begin{tikzpicture}[scale=0.4]
				
				\node[label={[shift={(0,-0.4)}]\scriptsize{1}}] (1) {};
				\node[label={[shift={(0,-0.4)}]\scriptsize{2}}] (2) [right of=1] {};
				\node[label={[shift={(0,-0.4)}]\scriptsize{3}}] (3) [right of=2] {};
				\node[label={[shift={(0,-0.4)}]\scriptsize{4}}] (4) [right of=3] {};
				\node[label={[shift={(0,-0.4)}]\scriptsize{5}}] (5) [right of=4] {};
				\node[label={[shift={(0,-0.4)}]\scriptsize{6}}] (6) [right of=5] {};
				
				\path[every node/.style]
				(3) edge node {} (4)
				(4) edge node {} (5);
				
				\draw
				(1) to[bend left=70] (6)
				(2) to[bend left=55] (4)
				(3) to[bend left=55] (5)
				(4) to[bend left=55] (6);
			\end{tikzpicture}
			\caption{A graph universal cycle for permutation graphs with $n=3$, which covers the $3$-permutations in the same order as the universal cycle for permutations $256413$.}
			\label{fig:3permguc}
		\end{figure}
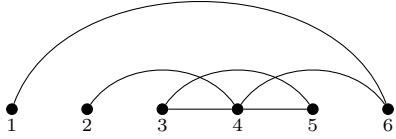
	\end{example}
	
	In contrast to \cite{CGGP21}, we will show in Lemma \ref{lem:soverlap} that permutation overlaps, as defined in Definition \ref{def:permoverlap}, correspond exactly to graph overlaps, as defined in Definition \ref{def:partialgraphoverlap}. We write permutations in the standard one-line notation so that, for example, $\pi = 312 \in S_3$ is the permutation defined by $\pi(1) = 3$, $\pi(2) = 1$, and $\pi(3) = 2$. Following the definition of permutation graphs in Section \ref{sec:permgucycles}, $\set{1,2}$ and $\set{1,3}$ are the inversions of $312$, so they are the edges of the permutation graph $G(312)$, shown in Figure \ref{fig:312}. In \cite{CGGP21} the stated definition of permutation graphs is the same, but Figures 7 and 8 in \cite{CGGP21} show that either the permutation graph of $\sigma \in S_n$ records the inversions of $\sigma^{-1}$ instead of the inversions of $\sigma$, or the vertices of the permutation graph are labeled in the order of $\sigma$ instead of the order $1,2,\ldots, n$. As a result, for example, the permutation graphs $G(312)$ and $G(231)$ appear to be swapped, and, more importantly, the authors found that graph overlaps did not necessarily correspond with permutation overlaps.
	
	\begin{figure}[h!]
		\centering
		\tikzstyle{every node}=[circle, draw, fill=black, inner sep=0pt, minimum width=4pt]
		\centering
		\begin{tikzpicture}[scale=0.4]
			
			\node[label={[shift={(0,-0.4)}]\scriptsize{1}}] (1) {};
			\node[label={[shift={(0,-0.4)}]\scriptsize{2}}] (2) [right of=1] {};
			\node[label={[shift={(0,-0.4)}]\scriptsize{3}}] (3) [right of=2] {};
			
			\path[every node/.style]
			(1) edge node {} (2);
			
			\draw
			(1) to[bend left=55] (3);
		\end{tikzpicture}
		\caption{The permutation graph $G(312)$}
		\label{fig:312}
	\end{figure}
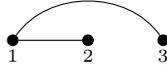
	
	\begin{lemma}\label{lem:soverlap}
		For every $n \ge 2$ and $1 \le s \le n-1$, the ordered pair of permutations $(\sigma, \tau) \in S_n^2$ overlaps by $s$ if and only if $(G(\sigma),G(\tau))$ overlaps by $s$.
	\end{lemma}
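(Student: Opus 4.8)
The plan is to reduce both notions of overlap to a single equality of reduced objects, exploiting the fact that forming the permutation graph commutes with passing to a window. First I would establish the key identity: for any $\pi \in S_n$ and any interval of consecutive positions $\set{a, a+1, \ldots, b} \subseteq [n]$, the reduced induced ordered subgraph $\red\bigl(G(\pi)[a, a+1, \ldots, b]\bigr)$ equals the permutation graph $G\bigl(\red(\pi_a \cdots \pi_b)\bigr)$. This holds because an edge $\set{i,j}$ with $i<j$ of $G(\pi)$ records exactly the relation $\pi(i) > \pi(j)$; restricting to the positions in the interval and relabeling them $1, \ldots, b-a+1$ in increasing order of position produces precisely the inversions of the pattern $\red(\pi_a\cdots\pi_b)$, which are by definition the edges of its permutation graph.

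Next I would apply this identity to the suffix window $[n-s+1, \ldots, n]$ of $\sigma$ and the prefix window $[1, \ldots, s]$ of $\tau$, obtaining $\red(G(\sigma)[n-s+1,\ldots,n]) = G(\red(\sigma_{n-s+1}\cdots\sigma_n))$ and $\red(G(\tau)[1,\ldots,s]) = G(\red(\tau_1\cdots\tau_s))$. By Definition \ref{def:partialgraphoverlap} and the characterization $G \oiso H \iff \red(G) = \red(H)$, the pair $(G(\sigma),G(\tau))$ overlaps by $s$ exactly when these two reduced ordered graphs coincide, i.e. when $G(\red(\sigma_{n-s+1}\cdots\sigma_n)) = G(\red(\tau_1\cdots\tau_s))$.

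Finally I would invoke the injectivity of the permutation graph map $G \colon S_s \to \set*{\text{permutation graphs on } [s]}$: a permutation is determined by its set of inversions, so $G(\alpha) = G(\beta)$ forces $\alpha = \beta$ for $\alpha, \beta \in S_s$. Hence the graph overlap condition is equivalent to $\red(\sigma_{n-s+1}\cdots\sigma_n) = \red(\tau_1\cdots\tau_s)$, which by Definition \ref{def:permoverlap} (recalling $\alpha \cong \beta \iff \red(\alpha) = \red(\beta)$) is exactly the statement that $(\sigma,\tau)$ overlaps by $s$. This chain of equivalences proves the lemma in both directions at once.

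Every step is routine once the conventions are pinned down; the one place demanding care is the commuting identity of the first step, which is precisely where \cite{CGGP21} ran into trouble. I would be careful to use the paper's fixed convention---vertices labeled $1, \ldots, n$ in order of position, with edges recording the inversions of $\pi$ itself rather than those of $\pi^{-1}$---so that restricting to a window and reducing genuinely matches taking the pattern and then forming its permutation graph. Under either of the alternative conventions described in the discussion of \cite{CGGP21}, this identity fails, which explains why graph overlaps did not line up with permutation overlaps there.
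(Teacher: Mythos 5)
Your proof is correct and follows essentially the same route as the paper's: both arguments come down to unwinding the definitions so that adjacency of $i,j$ in a window of the permutation graph is matched with the inversion $\pi_i > \pi_j$ in the corresponding window of the permutation, giving a chain of equivalences that proves both directions at once. The only cosmetic difference is that you factor the comparison through $\red$ and the injectivity of $G$ on $S_s$ (a permutation is determined by its inversion set), whereas the paper compares the inversions of the two windows directly without that extra step; both versions, including your care about the convention issue from \cite{CGGP21}, are sound.
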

	
	\begin{proof}
		By Definition \ref{def:permoverlap}, $(\sigma, \tau) \in S_n^2$ overlaps by $s$ if and only if $\sigma_{n-s+1}\cdots\sigma_n \cong \tau_1\cdots\tau_s$. Let $i$ and $j$ satisfy $n-s+1 \le i < j \le n$. By the definition of order isomorphism, we have $\sigma_i > \sigma_j$ if and only if $\tau_{i-n+s} > \tau_{j-n+s}$. By the definition of a permutation graph, we have $\sigma_i > \sigma_j$ if and only if $i$ and $j$ are adjacent in $G(\sigma)$, and $\tau_{i-n+s} > \tau_{j-n+s}$ if and only if $i-n+s$ and $j-n+s$ are adjacent in $G(\tau)$. Therefore, $i$ and $j$ are adjacent in $G(\sigma)$ if and only if $i-n+s$ and $j-n+s$ are adjacent in $G(\tau)$, so $G(\sigma)[n-s+1, \ldots, n] \oiso G(\tau)[1,\ldots,s]$.
	\end{proof}

	\begin{lemma}\label{lem:isoperm} For every $n$ and every $s$, the clustered graph of $s$-overlapping permutations $\mathcal{O}(n,s)$ is isomorphic to $P_{n,s}$, the arc digraph for permutation graphs on $[n]$. The isomorphism is given by the function $G$ that sends each permutation to its permutation graph, applied to both the vertex and edge sets. The function $G$ also induces a bijection between the Euler tours of $\mathcal{O}(n,s)$ and the Euler tours of $P_{n,s}$ that preserves the order in which the permutations are covered.
	\end{lemma}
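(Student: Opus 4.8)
The plan is to verify directly that the edge-labeled map $G$ is an isomorphism of directed multigraphs $\mathcal{O}(n,s) \to P_{n,s}$, and then to invoke the standard fact that a digraph isomorphism induces a bijection on Euler tours. As noted by Cantwell et al., $G$ already bijects the permutations of $[s]$ with the permutation graphs on $[s]$ (the vertices of $P_{n,s}$) and the permutations of $[n]$ with the permutation graphs on $[n]$ (the edges of $P_{n,s}$), so $G$ is simultaneously a bijection on the vertex sets and on the edge sets of the two digraphs. What remains, and is the only real content, is to check that $G$ respects the incidence (tail and head) structure, multiplicities included.

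For the incidence check I would first record an elementary window fact: for every $\pi \in S_n$ and every block of consecutive positions $a, a+1, \ldots, b$, the induced ordered subgraph $G(\pi)[a, a+1, \ldots, b]$ is isomorphic as an ordered graph to $G(\red(\pi_a \cdots \pi_b))$. This holds because the edges of $G(\pi)$ among the positions $a, \ldots, b$ are exactly the inversions of the subword $\pi_a \cdots \pi_b$, and passing to the order-isomorphic permutation $\red(\pi_a \cdots \pi_b) \in S_{b-a+1}$ preserves precisely which pairs are inversions, by the definition of order-isomorphism. Applying this with $(a,b) = (1,s)$ and with $(a,b) = (n-s+1, n)$ shows that the edge $G(\pi)$ of $P_{n,s}$ points from $G(\red(\pi_1 \cdots \pi_s))$ to $G(\red(\pi_{n-s+1} \cdots \pi_n))$, which are exactly the images under $G$ of the tail $\red(\pi_1 \cdots \pi_s)$ and head $\red(\pi_{n-s+1} \cdots \pi_n)$ of the edge $\pi$ in $\mathcal{O}(n,s)$. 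Since $G$ is a bijection on edges preserving both endpoints, it carries each parallel class of edges of $\mathcal{O}(n,s)$ bijectively onto the parallel class of $P_{n,s}$ with the corresponding endpoints, so $G$ is an isomorphism of directed multigraphs.

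For the last assertion, since $G$ is a digraph isomorphism sending the edge $\pi$ to the edge $G(\pi)$, it sends any closed walk traversing each edge of $\mathcal{O}(n,s)$ exactly once to a closed walk traversing each edge of $P_{n,s}$ exactly once, and conversely via $G^{-1}$; hence $G$ induces a bijection between the Euler tours of the two digraphs. Because an Euler tour is recorded as its sequence of edges and $G$ acts edgewise, the sequence of permutations covered by a tour of $\mathcal{O}(n,s)$ and the sequence of permutation graphs covered by its image agree position-by-position under $G$, so the order in which the permutations are covered is preserved.

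I expect no serious obstacle; the lemma is essentially a bookkeeping statement once the window fact is in hand. The one point deserving care is the match between the relation $\oiso$ used to define the endpoints of edges in $P_{n,s}$ and the literal equality of reduced graphs: since the vertices of $P_{n,s}$ are permutation graphs already on vertex set $[s]$, the relation $\oiso$ restricted to them coincides with plain equality after reduction, so the tail and head of $G(\pi)$ are well-defined single vertices and the identification above is exact rather than merely up to isomorphism.
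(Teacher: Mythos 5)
Your proof is correct and takes essentially the same route as the paper's: the paper's one-line proof appeals to its Lemma~\ref{lem:soverlap} (that $(\sigma,\tau)$ overlaps by $s$ iff $(G(\sigma),G(\tau))$ does), whose proof is exactly your inversion-preservation computation. Your ``window fact'' repackages that computation so as to identify the head and tail of each edge $G(\pi)$ directly, which is if anything slightly more explicit than the paper's appeal to the pairwise overlap relation, but it is the same underlying argument.
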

	
	\begin{proof}The pair of functions $(G,G)$ is an isomorphism of  $\mathcal{O}(n,s)$ and $P_{n,s}$ by Lemma \ref{lem:soverlap}, as both directed multigraphs are defined in the corresponding ways based on overlaps in permutations and permutation graphs. The last sentence of the lemma follows from the definition of isomorphism.
	\end{proof}
	
	\begin{theorem}\label{thm:perm graph bij}For every $n \ge 3$ there is a bijection between Euler tours of the clustered graph of overlapping permutations and graph universal cycles for permutation graphs that preserves the order in which $S_n$ is covered.\end{theorem}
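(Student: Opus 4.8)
The plan is to factor the desired bijection through the arc digraph $P_n$ for permutation graphs, combining the isomorphism already established in Lemma~\ref{lem:isoperm} with the generic correspondence between Euler tours of an arc digraph and gucycles. Specializing Lemma~\ref{lem:isoperm} to $s = n-1$, so that $\mathcal{O}(n) = \mathcal{O}(n,n-1)$ and $P_n = P_{n,n-1}$, the map $G$ already supplies an order-preserving bijection between the Euler tours of $\mathcal{O}(n)$ and the Euler tours of $P_n$. It therefore suffices to produce an order-preserving bijection between Euler tours of $P_n$ and graph universal cycles for permutation graphs on $[n]$.

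For the forward map I would take an Euler tour of $P_n$, viewed as a cyclic sequence of permutation graphs $e_1, e_2, \ldots, e_{n!}$ in which each consecutive pair $(e_i, e_{i+1})$ (indices mod $n!$) overlaps by $n-1$, by the definition of the edges of $P_n$. Repeated application of the Gluing Lemma (Lemma~\ref{lem:gluing}) glues $e_1, \ldots, e_{n!}$ into an ordered graph on $n! + n - 1$ vertices whose $i$th $n$-window is $e_i$; the Cyclic Gluing Lemma (Lemma~\ref{lem:cyclicgluing}) then closes the ends, using that $e_{n!}$ overlaps $e_1$, to yield a cyclically ordered graph on $n!$ vertices whose consecutive $n$-windows are $e_1, \ldots, e_{n!}$ in cyclic order. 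Because the tour traverses each edge of $P_n$, i.e.\ each permutation graph on $[n]$, exactly once, this cyclically ordered graph contains each permutation graph exactly once and so is a gucycle.

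The inverse map reads off the $n$-windows of a gucycle in cyclic order: each window is a permutation graph (an edge of $P_n$), consecutive windows overlap by $n-1$, and since the gucycle has exactly $n!$ vertices and contains each permutation graph exactly once, the resulting closed walk traverses each edge of $P_n$ exactly once, hence is an Euler tour. The converse half of the Cyclic Gluing Lemma guarantees that these two constructions are mutually inverse, and both visibly preserve the cyclic order in which the permutation graphs---equivalently, via $G^{-1}$, the permutations of $[n]$---are listed. Composing this with the bijection of Lemma~\ref{lem:isoperm} yields the claimed order-preserving bijection.

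I expect the only real obstacle to be verifying the hypotheses of the Cyclic Gluing Lemma rather than any new structural input. Two points need checking: first, the length bound $n! > 2(n-1)$, which holds for all $n \ge 3$ since $3! = 6 > 4$ and the inequality only strengthens as $n$ grows (and indeed fails at $n=2$, explaining the hypothesis $n \ge 3$); and second, that the glued ordered graph has no edges between vertices at distance greater than $n-1$, which follows because every edge of a permutation graph $G(\pi)$ on $[n]$ joins a pair $i<j$ in $[n]$ and so lies within a single $n$-window, and gluing introduces no longer edges. The structural heart of the theorem---that permutation overlaps coincide with graph overlaps---was already dispatched in Lemmas~\ref{lem:soverlap} and~\ref{lem:isoperm}, so the present statement is essentially an instance of the Euler-tour-to-gucycle correspondence described after Lemma~\ref{lem:cyclicgluing}.
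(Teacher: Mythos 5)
Your proposal is correct and follows essentially the same route as the paper's proof: factor through $P_n$ via Lemma~\ref{lem:isoperm}, build the gucycle by repeated application of the Gluing Lemma followed by the Cyclic Gluing Lemma (verifying $n! > 2(n-1)$ and the edge-distance condition), and invert by reading off $n$-windows.
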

	
	\begin{proof} By Lemma \ref{lem:isoperm}, the isomorphism $G$ induces a bijection between Euler tours of $\mathcal{O}(n)$ and Euler tours of $P_n$ that preserves the order in which the permutations are covered.
		
		The Euler tours of $P_n$ also are in bijection with the gucycles for permutation graphs on $[n]$ in a way that preserves the order in which the permutations are covered: an Euler tour lists the permutation graphs on $[n]$ in an order $p_1,p_2,\ldots, p_{n!}$ such that consecutive permutation graphs overlap. Let $q_1 = p_1$, and for all $2 \le k \le n!$ let $q_k$ be the gluing of $(q_{k-1},p_k)$, which is possible by Lemma \ref{lem:gluing}. Then $q_{n!}$ is a guword $H$, which has length $n!+n-1$, and since $(p_{n!},p_1)$ overlaps, we have that $(H,H)$ overlaps by $n-1$. Since the gluing operation does not create edges at distance greater than $n-1$, and $n \ge 3$, $H$ has no edges with one vertex in $\set{1,2,\ldots, n-1}$ and the other in $\set{n!+1, n!+2, \ldots, n!+n-1}$. For every $n \ge 3$, with $m = n!+n-1$, we have $m-n+1 = n! > 2(n-1)$, so Lemma \ref{lem:cyclicgluing} implies the existence of a cyclic gluing $J$ of $H$. Since $J$ and $H$ have the same $n$-windows in the same order, $J$ is a gucycle covering $S_n$ in the same order as the Euler tour.
		
		Given a gucycle $H$, the corresponding Euler tour of $P_{n}$  is $(p_1,p_2, \ldots, p_{n!})$, where for all $i \in [n!]$, $p_i := H[i, i+1, \ldots, i+n-1]$. That is, it is obtained by taking the $n$-windows of $H$ in order.
	\end{proof}
	
	\begin{remark}Theorem \ref{thm:perm graph bij} and its proof show how to convert a given universal word (or cycle) for permutations to a gucycle for permutation graphs, and a given gucycle for permutation graphs to at least a universal word for permutations, if not a universal cycle for permutations (see Conjecture \ref{conj:cycleclosing} and Remark \ref{rem:tourtoword}).\end{remark}
	
	\subsection{Correspondence between shortened universal words for permutations and gupcycles for permutation graphs}\label{subsec:permgupcycle}
	
	In \cite{KPV19}, shortened universal words for permutations are obtained by introducing incomparable elements at distance $n-1$. For example, $3123$ covers the permutations $3124$ and $4123$. In general, a word $w$ over the natural numbers defines a partial order on the positions of the word, and is said to \emph{cover} all permutations that represent total orders extending the partial order $w$. In \cite{KLSS22}, it is shown that shortened universal cycles for permutations also exist for the corresponding lengths, but we will not rely on this result and instead connect shortened universal words for permutations with graph universal partial cycles for permutation graphs.
	
	\begin{theorem}[Theorem 7 in \cite{KPV19}]\label{thm:universal words for perms}
		Using incomparable elements at distance $n-1$, one can obtain a shortened universal word for $n$-permutations of length $n! + n-1 - i(n-1)$ for all $i$ in $0 \le i \le (n-2)!$.
	\end{theorem}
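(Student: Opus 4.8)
The plan is to mirror the twin-compression argument of Section~\ref{sec:labeled}, but carried out in the clustered graph of overlapping permutations $\mathcal{O}(n)$ (equivalently, by Lemma~\ref{lem:isoperm}, in the arc digraph $P_n$). First I would define \emph{twins}: call $\pi,\pi' \in S_n$ twins if they induce the same edge endpoints in $\mathcal{O}(n)$, that is, $\red(\pi_1\cdots\pi_{n-1}) = \red(\pi'_1\cdots\pi'_{n-1})$ and $\red(\pi_2\cdots\pi_n) = \red(\pi'_2\cdots\pi'_n)$. Such a pair differs only in the relative order of the two entries at distance $n-1$, namely positions $1$ and $n$, and these are precisely the pairs coverable by a single window that places incomparable elements at positions $1$ and $n$. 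Thus compressing a twin pair corresponds exactly to the ``incomparable elements at distance $n-1$'' device of the statement.

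Next I would determine exactly when a twin exists and where the merged edge points. Fixing the $n-2$ middle entries $\pi_2,\ldots,\pi_{n-1}$, the entries $\pi_1$ and $\pi_n$ each fall into one of the $n-1$ gaps those middle entries determine; the relative order of $\pi_1$ and $\pi_n$ is free (so a twin exists) if and only if they lie in the same gap, and is forced by transitivity through the middle otherwise. A short computation then shows that from each vertex $u \in S_{n-1}$ there is exactly one twin pair with tail $u$, and that its head is the left rotation $\rho(u) := \red(u_2 u_3 \cdots u_{n-1} u_1)$, since replacing $\pi_1$ by an entry $\pi_n$ in the same gap does not change its order relative to the middle. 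Hence the subdigraph $T$ of twin edges (one per pair) is the single permutation $\rho$ of the vertex set $S_{n-1}$. Because a permutation word with distinct entries cannot equal a nontrivial rotation of itself, every orbit of $\rho$ has size exactly $n-1$, so $T$ decomposes into exactly $(n-1)!/(n-1) = (n-2)!$ directed cycles, each of length $n-1$. This is the source of both the increment $n-1$ and the bound $i \le (n-2)!$.

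With this structure in hand the construction proceeds as in Theorems~\ref{thm:tourcollection} and~\ref{thm:guplengths}. Fixing $0 \le i \le (n-2)!$, I would select $i$ of the $(n-2)!$ rotation cycles and compress every twin pair lying on a selected cycle, replacing its two parallel edges by one edge carrying incomparable elements at distance $n-1$. The compressed (removed) edges form $i$ edge-disjoint directed cycles, so the resulting multigraph $M_i$ stays balanced; merging parallel edges cannot destroy the strong connectivity inherited from $\mathcal{O}(n)$, so $M_i$ is Eulerian. Reading an Euler tour of $M_i$ and gluing consecutive permutations via Lemma~\ref{lem:gluing} yields a universal word in which each merged edge covers its twin pair once through the incomparable-element window, so every permutation of $S_n$ is covered exactly once. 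Since $M_i$ has $n! - i(n-1)$ edges, the word has length $\bigl(n! - i(n-1)\bigr) + (n-1) = n! + n - 1 - i(n-1)$, as claimed. Note that I would read an Euler \emph{trail}/tour into a word and deliberately not close it into a cycle, sidestepping the open Conjecture~\ref{conj:cycleclosing}.

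The cycle count and the balance bookkeeping are the routine parts. The hard part will be confirming that the incomparable elements can be realized \emph{simultaneously} as an honest word over the natural numbers: each merged edge only prescribes locally that its two distance-$(n-1)$ entries be incomparable, and these local prescriptions must assemble, through the length-$(n-1)$ overlaps, into one word whose induced partial order has as its linear extensions exactly the $n!$ target permutations, each realized once. I expect the main obstacle to be checking that distinct merged windows never interact to cover an unintended permutation or to double-cover one, and that a globally consistent symbol assignment exists (as in the known universal-word constructions, which can be done on $n+1$ symbols). The twin/rotation analysis supplies the edge-disjointness, balance, and connectivity that make the Euler tour available; what remains is precisely this word-level realizability.
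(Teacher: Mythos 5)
First, note that the paper does not prove this statement at all: it is imported verbatim as Theorem 7 of \cite{KPV19}, and the surrounding text only summarizes the cited proof (the clustered graph of overlapping $n$-permutations contains $(n-2)!$ disjoint cycles formed by twin edges, each of length $n-1$, and compressing any subset of these cycles leaves the graph Eulerian). Your twin/rotation analysis correctly reconstructs exactly that skeleton: twins are the pairs with $\{\pi_1,\pi_n\}=\{k,k+1\}$ and identical middles, each vertex $u\in S_{n-1}$ is the tail of exactly one twin pair, the merged edge points to the left rotation $\rho(u)$, and since a word with distinct letters never equals a nontrivial rotation of itself, all $\rho$-orbits have size $n-1$, giving $(n-2)!$ disjoint cycles. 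The balance and connectivity bookkeeping and the length count $\bigl(n!-i(n-1)\bigr)+(n-1)$ are likewise fine, and they match Theorems \ref{thm:tourcollection} and \ref{thm:guplengths} in spirit.

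The genuine gap is the one you name in your final paragraph and do not close: passing from an Euler tour of the partially compressed clustered graph to an actual word over the natural numbers. This step is not automatic even in the uncompressed case --- it is precisely the content of Remark \ref{rem:tourtoword}, and its cyclic version is the open Conjecture \ref{conj:cycleclosing} --- and for the compressed case it carries an extra constraint: traversing a merged edge forces the newly appended symbol to \emph{equal} the symbol sitting $n-1$ positions earlier (as in the example $3123$ covering $3124$ and $4123$), while traversing an ordinary edge requires inserting a fresh value into a prescribed gap. One must verify that these local prescriptions can be satisfied greedily all the way through, and that the only incomparabilities created in the resulting word are the intended ones at distance $n-1$, so that no window covers an unintended permutation and none is double-covered. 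That verification is where the actual work of Theorem 7 of \cite{KPV19} lies; without it your argument produces an Euler tour of a compressed digraph, not yet a shortened universal word.
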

	
	\begin{remark}\label{rem:eulertour}Every Euler trail in an Eulerian directed multigraph can be extended to an Euler tour. Thus, these shortened universal words for $n$-permutations correspond not only to Euler trails but also to Euler tours of partially compressed clustered graphs of overlapping $n$-permutations. In fact, in \cite{KPV19}, Theorem \ref{thm:universal words for perms} is proved by first showing the existence of such Euler tours. The Euler tours have lengths $n! - i(n-1)$ for all $i$ in $0 \le i \le (n-2)!$, which are the numbers of $n$-windows in the shortened universal words.
	\end{remark}
	
	In \cite{KPV19} it is observed that if $e$ and $f$ are twin edges in the clustered graph of overlapping $n$-permutations then (without loss of generality) $e_1 = k = f_n$ and $e_n = k+1 = f_1$, for some $k \in [n-1]$, and $e_2\cdots e_{n-1} = f_2\cdots f_{n-1}$. Therefore $e$ and $f$ can be \emph{compressed} into a single edge $g$ that has $g_1 = k = g_n$ and $g_2\cdots g_{n-1} \iso e_2\cdots e_{n-1}$. In this way, $g$ covers $e$ and $f$ but no other $n$-permutations. To use this idea to create graph universal partial cycles, we observe that the permutation graphs of twin edges $e$ and $f$, $G(e)$ and $G(f)$, differ only by the presence or absence of the edge $\set{1,n}$, so the word $g$ can equivalently be represented by the partial graph having the diamond edge $\set{1,n}$ and otherwise agreeing with both $G(e)$ and $G(f)$.
	
	\begin{remark}\label{rem:compressediso}
		By Lemma \ref{lem:isoperm}, compressing twins in the clustered graph of overlapping $n$-permutations is equivalent to compressing twins in the arc digraph $P_n$. Thus, there is a bijection between the Euler tours of partially compressed clustered graphs of overlapping $n$-permutations and the Euler tours of partially compressed arc digraphs for permutation graphs on $[n]$.
	\end{remark}
	
	In the proof of Theorem 7 in \cite{KPV19}, it is shown that the clustered graph of overlapping permutations contains $(n-2)!$ disjoint cycles formed by twin edges, each of length $n-1$ by \cite[Lemma 6]{KPV19}, and that, after compressing any number of these cycles of twin edges, the clustered graph of overlapping permutations remains Eulerian.
	
	\begin{theorem}\label{thm:permgupbij}
		For every $n$, there is a bijection between the set of Euler tours of length greater than $2(n-1)$ of partially compressed clustered graphs of overlapping $n$-permutations, and the set of graph universal partial cycles for permutation graphs on $[n]$ in which every diamond edge is at distance $n-1$, which preserves the partial order in which $S_n$ is covered.
	\end{theorem}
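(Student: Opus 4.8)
The plan is to factor the desired bijection through the arc digraph for permutation graphs, mirroring the proof of Theorem \ref{thm:perm graph bij} but now in the compressed setting. By Remark \ref{rem:compressediso}, the isomorphism $G$ already yields a bijection between the Euler tours of partially compressed clustered graphs of overlapping $n$-permutations and the Euler tours of partially compressed arc digraphs for permutation graphs on $[n]$, preserving both length and the order in which the permutations are covered. It therefore suffices to establish a bijection between the Euler tours of length greater than $2(n-1)$ of partially compressed arc digraphs for permutation graphs and the gupcycles for permutation graphs on $[n]$ whose diamond edges all lie at distance $n-1$.

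For the forward direction, I would take an Euler tour $(p_1, \ldots, p_\ell)$ of a partially compressed arc digraph for permutation graphs. Each edge $p_i$ is either a permutation graph on $[n]$ or the compression of a twin pair, which (by the discussion preceding Remark \ref{rem:compressediso}) is a partial permutation graph on $[n]$ whose only diamond edge is $\set{1,n}$. Since the pair $\set{1,n}$ lies in neither the induced subgraph on $\set{1,\ldots,n-1}$ nor that on $\set{2,\ldots,n}$, compression does not disturb overlaps, so consecutive edges still overlap by $n-1$. I would then glue the $p_i$ in order using the Gluing Lemma (Lemma \ref{lem:gluing}) to build a gupword $H$ of length $\ell + n - 1$, and close it with the Cyclic Gluing Lemma (Lemma \ref{lem:cyclicgluing}); the hypothesis $\ell > 2(n-1)$ is exactly the length condition that lemma requires, since the resulting cyclic length is $m - n + 1 = \ell$. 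Because each compressed $p_i$ contributes its diamond edge $\set{1,n}$ between the first and last vertices of the $i$th window, every diamond edge of the resulting gupcycle $J$ lies at distance $n-1$, and $J$ covers each permutation graph exactly once (uncompressed edges covering one, compressed edges covering the twin pair).

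For the reverse direction, given a gupcycle $J$ for permutation graphs whose diamond edges all lie at distance $n-1$, I would read off its $n$-windows $p_i := J[i, \ldots, i+n-1]$ in order. The key observation is that within any single $n$-window the unique pair of vertices at distance $n-1$ is the pair $\set{1,n}$ of that window, and a diamond edge at distance $n-1$ belongs to exactly one $n$-window; hence each $p_i$ is either a genuine permutation graph on $[n]$ or a partial permutation graph whose sole diamond edge is $\set{1,n}$, i.e., an edge of a partially compressed arc digraph. Consecutive windows overlap by $n-1$ and each permutation graph is covered exactly once, so $(p_1, \ldots, p_\ell)$ is an Euler tour of a partially compressed arc digraph. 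One then checks these two constructions are mutually inverse and preserve the partial cyclic order in which $S_n$ is covered, two permutation graphs covered by a common diamond window being exactly the incomparable pair of that partial order.

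The routine parts---gluing, closing the cycle, and verifying the length condition---transfer directly from Theorem \ref{thm:perm graph bij}. I expect the only point requiring real care to be the window analysis in the reverse direction: verifying that the hypothesis ``every diamond edge is at distance $n-1$'' forces each $n$-window to be exactly one of the admissible edges (a permutation graph or a single compressed twin, with its diamond edge necessarily $\set{1,n}$), and that no window contains more than one diamond edge. This is what guarantees that the $n$-windows assemble into an Euler tour of a legitimately partially compressed arc digraph rather than some other partial structure, and hence that the map is well defined in both directions.
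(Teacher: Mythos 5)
Your proposal is correct and follows essentially the same route as the paper: reduce via Remark \ref{rem:compressediso} to the arc digraph $P_n$, glue the tour edges and close the cycle with Lemma \ref{lem:cyclicgluing} in the forward direction, and read off $n$-windows (compressing exactly the twin pairs sharing a window) in the reverse direction. The window analysis you flag as the delicate point---that a diamond edge at distance $n-1$ sits in exactly one $n$-window, as its endpoints' unique pair at that distance---is indeed the implicit justification behind the paper's reverse construction.
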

	
	\begin{proof}By Remark \ref{rem:compressediso}, we need only show a bijection between the set of Euler tours of length greater than $2(n-1)$ of partially compressed arc digraphs for permutation graphs on $[n]$, and the set of graph universal partial cycles for permutation graphs on $[n]$ in which every diamond edge is at distance $n-1$, which preserves the partial order in which $S_n$ is covered.
		
		Let $T = (p_1, p_2, \ldots, p_\ell)$ be an Euler tour of a partially compressed arc digraph for permutation graphs on $[n]$ with $\ell > 2(n-1)$. The $p_i$'s are ordered partial graphs containing one or more permutation graphs on $[n]$, and $(p_i, p_{i+1})$ overlaps for every $i \in[\ell]$, where $p_{\ell+1}:=p_1$. Let $q_1 = p_1$, and for all $2 \le k \le \ell$ let $q_k$ be the gluing of $(q_{k-1},p_k)$. Then $q_{\ell}$ is a gupword $H$, which has length $\ell+n-1$, and since $(p_{\ell},p_1)$ overlaps, we have that $(H,H)$ overlaps by $n-1$. The condition $\ell > 2(n-1)$ implies $(\ell+1)-(n-1) > n-1$; since the gluing operation does not create edges (or diamond edges) at distance greater than $n-1$, there are no edges (or diamond edges) with one vertex in $\set{1,2,\ldots,n-1}$ and the other in $\set{\ell+1,\ell+2,\ldots,\ell+n-1}$. By Lemma \ref{lem:cyclicgluing} with $m = \ell+n-1$, there exists a cyclic gluing $J$ of $H$ that has the same $n$-windows as $H$ and in the same order. Thus $J$ is a gupcycle covering $S_n$ in the same partial order as in $H$ and as in $T$. 
		
		For the inverse, let $J$ be a gupcycle for permutation graphs on $[n]$ in which every diamond edge is at distance $n-1$. Let $\ell$ be the number of vertices of $J$, and, for all $i\in [\ell]$, let $J_i = J[i,i+1,\ldots, i+n-1]$, the $i$th $n$-window of $J$. In $P_n$, compress exactly the pairs of twin edges representing permutation graphs contained in the same window $J_i$. Then $(J_1,J_2,\ldots,J_\ell)$ is the corresponding Euler tour of this partially compressed arc digraph.
	\end{proof}

	\begin{corollary}
		For every $n \ge 4$, using diamond edges at distance $n-1$, one can obtain a graph universal partial cycle for permutation graphs on $[n]$ of length $n! - i(n-1)$ for all $i$ in $0 \le i \le (n-2)!$.
	\end{corollary}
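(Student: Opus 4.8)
The plan is to chain together the two results about permutation graphs established immediately above. Theorem \ref{thm:permgupbij} gives a length-preserving bijection between Euler tours (of length greater than $2(n-1)$) of partially compressed clustered graphs of overlapping $n$-permutations and gupcycles for permutation graphs on $[n]$ whose diamond edges all sit at distance $n-1$; Remark \ref{rem:eulertour} records that the construction behind Theorem \ref{thm:universal words for perms} produces exactly such Euler tours of length $n! - i(n-1)$ for every $i$ with $0 \le i \le (n-2)!$, obtained by compressing $i$ of the $(n-2)!$ disjoint cycles of twin edges (each of length $n-1$). So the corollary is essentially a matter of feeding one result into the other, and the proof should be short.

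The only hypothesis that needs checking before applying the bijection is the length condition $\ell > 2(n-1)$ in Theorem \ref{thm:permgupbij}. Since the lengths $n! - i(n-1)$ decrease in $i$, it suffices to verify the shortest one, at $i = (n-2)!$, which equals
\[
n! - (n-2)!(n-1) = n(n-1)(n-2)! - (n-1)(n-2)! = (n-1)^2 (n-2)!.
\]
Requiring $(n-1)^2(n-2)! > 2(n-1)$ is equivalent to $(n-1)(n-2)! = (n-1)! > 2$, which holds for every $n \ge 4$ (indeed $(n-1)! \ge 6$). This is precisely where the hypothesis $n \ge 4$ enters.

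Finally I would apply the bijection of Theorem \ref{thm:permgupbij} to each of these Euler tours. Because the bijection preserves length (and in fact the partial order in which $S_n$ is covered), each Euler tour of length $n! - i(n-1)$ yields a gupcycle for permutation graphs on $[n]$, with every diamond edge at distance $n-1$, of length $n! - i(n-1)$, giving the claim for all $i$ in $0 \le i \le (n-2)!$. I do not expect any genuine obstacle here: the substantive work was already carried out in Theorem \ref{thm:permgupbij}, and the sole routine calculation is the length bound above.
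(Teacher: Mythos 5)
Your proposal is correct and follows essentially the same route as the paper: invoke Theorem \ref{thm:universal words for perms} together with Remark \ref{rem:eulertour} to obtain Euler tours of the required lengths, verify the length condition $\ell > 2(n-1)$ (your computation $n! - (n-2)!(n-1) = (n-1)^2(n-2)! = (n-1)(n-1)!$ matches the paper's bound), and then apply Theorem \ref{thm:permgupbij}. No substantive differences.
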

	
	\begin{proof}Theorem \ref{thm:universal words for perms} guarantees the existence of shortened universal words for $n$-permutations, which by Remark \ref{rem:eulertour} correspond to Euler tours of partially compressed clustered graphs of overlapping $n$-permutations, of lengths $n! - i(n-1)$ for all $i$ in $0 \le i \le (n-2)!$. For every $n \ge 4$, these Euler tours' lengths are greater than $2(n-1)$ because 
		\[
		n! - i(n-1) \ge n! - (n-1)! = (n-1)(n-1)! > 2(n-1).
		\]
		Theorem \ref{thm:permgupbij} converts these Euler tours into graph universal partial cycles of the same lengths.
	\end{proof}

	In Theorem \ref{thm:permgupbij} we specify that every diamond edge is at distance $n-1$ in order to match the operation of compressing twins. See Section \ref{subsec:permfuture} for other kinds of gupwords for permutation graphs. More generally, if $w$ is a shortened universal word (or cycle) for $n$-permutations, then a graph universal partial word (or cycle) can be formed by gluing together ordered partial graphs determined by the $n$-windows of $w$.

	\begin{example}Figure \ref{fig:permgupcycle} shows an example of a gupcycle for permutation graphs on $[4]$ and its corresponding Euler tour.
		
		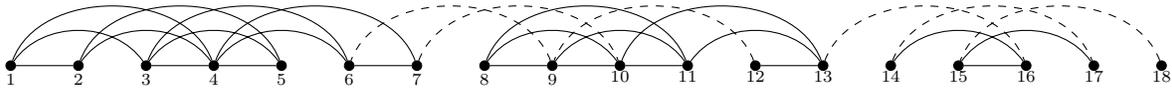
\begin{figure}[h!]
			\centering
			
			\tikzstyle{every node}=[circle, draw, fill=black, inner sep=0pt, minimum width=4pt, scale=0.9]
			\begin{tikzpicture}
				
				\node[label={[shift={(0,-0.4)}]\scriptsize{1}}] (16) {};
				\node[label={[shift={(0,-0.4)}]\scriptsize{2}}] (17) [right of=16] {};
				\node[label={[shift={(0,-0.4)}]\scriptsize{3}}] (18) [right of=17] {};
				\node[label={[shift={(0,-0.4)}]\scriptsize{4}}] (5) [right of=18] {};
				\node[label={[shift={(0,-0.4)}]\scriptsize{5}}] (6) [right of=5] {};
				\node[label={[shift={(0,-0.4)}]\scriptsize{6}}] (7) [right of=6] {};
				\node[label={[shift={(0,-0.4)}]\scriptsize{7}}] (8) [right of=7] {};
				\node[label={[shift={(0,-0.4)}]\scriptsize{8}}] (9) [right of=8] {};
				\node[label={[shift={(0,-0.4)}]\scriptsize{9}}] (10) [right of=9] {};
				\node[label={[shift={(0,-0.4)}]\scriptsize{10}}] (11) [right of=10] {};
				\node[label={[shift={(0,-0.4)}]\scriptsize{11}}] (12) [right of=11] {};
				\node[label={[shift={(0,-0.4)}]\scriptsize{12}}] (13) [right of=12] {};
				\node[label={[shift={(0,-0.4)}]\scriptsize{13}}] (14) [right of=13] {};
				\node[label={[shift={(0,-0.4)}]\scriptsize{14}}] (15) [right of=14] {};  \node[label={[shift={(0,-0.4)}]\scriptsize{15}}] (1) [right of=15] {};
				\node[label={[shift={(0,-0.4)}]\scriptsize{16}}] (2) [right of=1] {};
				\node[label={[shift={(0,-0.4)}]\scriptsize{17}}] (3) [right of=2] {};
				\node[label={[shift={(0,-0.4)}]\scriptsize{18}}] (4) [right of=3] {};
				
				\draw
				(1) to (2)
				(5) to (6)
				(7) to (8)
				(12) to (11) to (10) to (9)
				(13) to (14)
				(16) to (17)
				
				(3) to[bend left=-55] (1)
				(7) to[bend left=-55] (5)
				(8) to[bend left=-70] (5)
				(11) to[bend left=-55] (9)
				(12) to[bend left=-55] (10)
				(14) to[bend left=-55] (12)
				(12) to[bend left=-70] (9)
				(14) to[bend left=-70] (11)
				(18) to[bend left=-55] (16)
				(15) to[bend left=55] (2)
				
				(5) to[bend left=-70] (16)
				(5) to[bend left=-55] (17)
				(5) to (18)
				(6) to[bend left=-70] (17)
				(7) to[bend left=-70] (18)
				(6) to[bend left=-55] (18);
				
				\draw[dashed]
				(4) to[bend left=-70] (1)
				(10) to[bend left=-70] (7)
				(13) to[bend left=-70] (10)
				(14) to[bend left=70] (2)
				(15) to[bend left=70] (3)
				(8) to[bend left=70] (11);
				
			\end{tikzpicture}   
			
			\caption{The gupcycle for the Euler tour ($4231$, $3421$, $4312$, $4132$, $1324$, $2132$, $1431$, $4321$, $3213$, $3241$, $2314$, $2134$, $1231$, $2312$, $3123$, $1234$, $1243$, $1423$) of the arc digraph for permutation graphs on $[4]$ after compressing twins to obtain the edges $2132$, $1431$, $3213$, $1231$, $2312$, $3123$.}
			\label{fig:permgupcycle}
		\end{figure}
	\end{example}
	
	\subsection{Bijection between Euler tours of the clustered graph of $s$-overlapping permutations and $s$-gocycles for permutation graphs}\label{subsec:socycle}
	
	In this section we consider the generalization of universal cycles to $s$-overlap cycles, first introduced in \cite{GKN10}, where consecutive elements overlap by a constant $s \le n-1$. Since $n+1$ distinct symbols are needed in a universal cycle for $n$-permutations, in \cite{HH13perm}, the authors showed the existence of $s$-overlap cycles for permutations of $[n]$ that use only $n$ distinct symbols, with $s \le n-2$. Here we generalize their definition of an $s$-overlap cycle for permutations to use order-isomorphism rather than equality as words. We do not restrict the number of distinct symbols.
	
	\begin{definition}\label{def:socycle}
		An \emph{$s$-overlap cycle} (or \emph{$s$-ocycle}) for permutations on $[n]$ is a cyclic sequence of numbers in which every permutation on $[n]$ is order-isomorphic to exactly one $n$-window whose index set is equivalent to $\set{(i-1)(n-s)+1, (i-1)(n-s)+2, \ldots, (i-1)(n-s)+n} \mod{n!(n-s)}$ for some $i \in [n!]$.
	\end{definition}
	
	In other words, an $s$-ocycle is a compact representation of a cyclic listing of the permutations on $[n]$ such that every pair of consecutive permutations $(\sigma,\tau)$ overlaps by $s$. (Recall we defined permutation overlapping in Definition \ref{def:permoverlap} such that in an $s$-ocycle the last $s$ letters of each permutation are order-isomorphic to the first $s$ letters of the next permutation.) In particular, by Definition \ref{def:socycle}, an $(n-1)$-ocycle is a universal cycle. The length of an $s$-ocycle for permutations on $[n]$ is $n!(n-s)$.
	
	\begin{example}\label{example:socycle} The cyclic word
		$012436543645362401324675689568732410425380756432$ is a $2$-overlap cycle for permutations on $[4]$ because its $4$-windows in the relevant positions are order-isomorphic to $1234$, $1324$, $1432$, $3214$, $1423$, $2314$, $2413$, $3412$, $1243$, $2134$, $1342$, $3124$, $2341$, $4123$, $2431$, $4213$, $3421$, $2143$, $3142$, $3241$, $4132$, $4231$, $4321$, and $4312$, which are all of the $4$-permutations exactly once.
	\end{example}
	
	We make the following analogous definition in the ordered graph context.
	
	\begin{definition}
		A \emph{graph $s$-overlap cycle} (or \emph{$s$-gocycle}) for permutation graphs on $[n]$ is a cyclically ordered graph $G$ on vertex set $[n!(n-s)]$ in which every permutation graph on vertex set $[n]$ is isomorphic as an ordered graph to exactly one $n$-window of $G$ whose vertex set is equivalent to $\set{(i-1)(n-s)+1, (i-1)(n-s)+2, \ldots, (i-1)(n-s)+n} \mod{n!(n-s)}$ for some $i \in [n!]$.
	\end{definition}
	
	In particular, an $(n-1)$-gocycle is a gucycle.

	\begin{theorem}[Result 4 in Horan and Hurlbert \cite{HH13perm}]\label{thm:HH}
		Let $n, s \in \N$ with $n \ge 2$. If either (1) $1 \le s < n/2$, or (2) $\gcd(s, n) = 1$ with
		$n/2 \le s \le n - 2$, then there exists an $s$-ocycle on the set of permutations of $[n]$, in which the last $s$ letters of each permutation are equal [as words---not merely order-isomorphic] to the first $s$ letters of the next permutation.
	\end{theorem}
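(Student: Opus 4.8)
The plan is to realize the desired cyclic listing of $S_n$ as an Euler tour of a transition digraph whose states record the \emph{literal} overlap strings, so that the resulting $s$-ocycle uses only the symbols in $[n]$ and its overlaps are equalities of words rather than mere order-isomorphisms. This is the $s$-overlap, literal-symbol refinement of the clustered graph $\mathcal{O}(n,s)$, and it is the natural object to build given that the theorem asks for overlaps that are equal as words.

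First I would set up the transition digraph $\Gamma$: its vertices are the ordered $s$-tuples of distinct symbols from $[n]$ (the possible literal overlap blocks), and its edges are the permutations of $[n]$ in one-line notation, with the edge $\pi = \pi_1\cdots\pi_n$ directed from the literal string $\pi_1\cdots\pi_s$ to the literal string $\pi_{n-s+1}\cdots\pi_n$ (the $s$-overlap condition of Definition \ref{def:permoverlap}, read literally). Gluing the edges of an Euler tour of $\Gamma$ along their shared $s$-blocks produces a single cyclic sequence of length $n!(n-s)$ whose designated $n$-windows are exactly the $n!$ permutations of $[n]$, each once, with literal $s$-overlaps; that is, an $s$-ocycle of the kind demanded in Definition \ref{def:socycle}. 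Since a balanced and strongly connected directed graph is Eulerian (as recorded in Section \ref{sec:prelim}), it suffices to verify these two properties of $\Gamma$.

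Balance is immediate and holds for every $s$: fixing a vertex $v$, its out-edges are the permutations whose first $s$ letters spell $v$, obtained by arranging the remaining $n-s$ symbols freely, and its in-edges arrange those symbols in the first $n-s$ positions, so $\Gamma$ has in-degree and out-degree $(n-s)!$ at each vertex. The whole content of the theorem therefore sits in strong connectivity, and I expect this to be the main obstacle: the two hypotheses on $s$ are precisely what force all $n!$ edges into a single strongly connected component. In case (1), where $s < n/2$, the position sets $\set{1,\dots,s}$ and $\set{n-s+1,\dots,n}$ are disjoint, so the head and tail strings of an edge may be chosen to share no symbols; this decoupling leaves enough freedom to route a directed path between any two prescribed overlap strings, and strong connectivity follows from an explicit such routing.

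In case (2), where $n/2 \le s \le n-2$ and $\gcd(s,n)=1$, the position sets $\set{1,\dots,s}$ and $\set{n-s+1,\dots,n}$ overlap in a block of length $2s-n \ge 0$, so the head and tail of each edge must agree there and transitions become rigid. Here I would track how a single ``slide'' by $n-s$ positions permutes the $n$ symbols across the overlap window, and invoke $\gcd(n-s,n)=\gcd(s,n)=1$ to show that iterating the slide runs through a complete residue system and hence eventually places every symbol in every overlap position; this is exactly what prevents $\Gamma$ from splitting into several shorter closed walks that would each miss some permutations, mirroring the classical role of $\gcd$ conditions in single-track and overlap-cycle constructions. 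Completing this orbit analysis to establish strong connectivity is the hard step. Once $\Gamma$ is known to be balanced and strongly connected, any Euler tour yields the required $s$-ocycle with overlaps that are literal equalities of words, in contrast to the merely order-isomorphic overlaps that an Euler tour of $\mathcal{O}(n,s)$ guarantees (compare the cycle-closing subtlety in Conjecture \ref{conj:cycleclosing} and Remark \ref{rem:tourtoword}).
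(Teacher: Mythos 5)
This statement is not proved in the paper at all: it is Result~4 of Horan and Hurlbert, imported verbatim from \cite{HH13perm}, so there is no internal proof to compare against. Judged on its own terms, your proposal sets up the right frame --- the literal-symbol transition digraph $\Gamma$ on ordered $s$-tuples of distinct symbols, the count showing every vertex has in-degree and out-degree $(n-s)!$, and the passage from an Euler tour to an $s$-ocycle of length $n!(n-s)$ with overlaps equal as words --- and it correctly isolates strong connectivity as the entire content of the theorem. But that is also where the proposal stops being a proof. In case (1) the claim that ``this decoupling leaves enough freedom to route a directed path between any two prescribed overlap strings'' is not automatic: consecutive overlap blocks along a path must be \emph{disjoint} $s$-subsets of $[n]$, and when $n/3 < s < n/2$ you cannot in general insert a single intermediate block disjoint from both endpoints, so the routing genuinely requires an argument (essentially the connectivity of the Kneser graph $K(n,s)$ for $n>2s$, which you do not invoke or prove). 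In case (2) the situation is worse: for $s>n/2$ an out-neighbor of $u$ is forced to begin with the last $2s-n$ letters of $u$, so transitions are a shift-register structure, and the assertion that iterating the slide by $n-s$ ``runs through a complete residue system and hence eventually places every symbol in every overlap position'' is an intuition about a necessary condition, not a proof that all $n!/(n-s)!$ states lie in one strongly connected component. You explicitly flag this orbit analysis as ``the hard step'' and leave it undone; since the gcd hypothesis enters the theorem only there, the proposal as written proves nothing that depends on the hypotheses.

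Two smaller points. First, when $s>n/2$ the designated $n$-windows of the glued sequence overlap not only with their immediate neighbors but with several subsequent windows, so the claim that gluing an Euler tour yields a well-defined cyclic sequence needs a (short) consistency check that all pairwise overlaps agree; this does go through by chaining the literal equalities through intermediate windows, but it should be said. Second, your closing contrast with Conjecture~\ref{conj:cycleclosing} and Remark~\ref{rem:tourtoword} is apt: literal overlaps do remove the cycle-closing obstruction that plagues the order-isomorphic setting, and this is a genuine advantage of the construction you describe --- but it is an advantage of a construction whose central lemma remains unproven here.
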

	
	Theorem \ref{thm:HH} gives a significant generalization of the existence of universal cycles for $n$-permutations. Note that Theorem \ref{thm:HH} omits the case $s=n-1$ because there is no $(n-1)$-ocycle for permutations of $[n]$ in which the overlaps are equal as words. However, it was previously known that universal cycles for $n$-permutations exist, and by using order-isomorphism we have expanded the definition of $s$-ocycle such that universal cycles for permutations are $(n-1)$-ocycles. Therefore Theorem \ref{thm:HH} can be restated in combination with the case $s=n-1$, using Definition \ref{def:socycle} of $s$-ocycle, as follows.

	\begin{theorem}\label{thm:HHmod}
		Let $n, s \in \N$ with $n \ge 2$. If either (1) $1 \le s < n/2$, or (2) $\gcd(s, n) = 1$ with
		$n/2 \le s \le n - 1$, then there exists an $s$-ocycle on the set of permutations of $[n]$.
	\end{theorem}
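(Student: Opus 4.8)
The plan is to derive Theorem \ref{thm:HHmod} directly from Theorem \ref{thm:HH} together with the known existence of universal cycles for $n$-permutations, by checking that the hypotheses of Theorem \ref{thm:HHmod} decompose cleanly into the part already handled by Horan and Hurlbert and the single new value $s = n-1$.

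First I would observe that the hypotheses of Theorem \ref{thm:HHmod} differ from those of Theorem \ref{thm:HH} in exactly one way: case (2) now allows $s = n-1$ rather than stopping at $s = n-2$. So I would split into two regimes. In the regime where $1 \le s < n/2$ (case (1)), or $\gcd(s,n) = 1$ with $n/2 \le s \le n-2$ (the truncated case (2)), Theorem \ref{thm:HH} already produces an $s$-ocycle in which the last $s$ letters of each permutation are \emph{equal as words} to the first $s$ letters of the next. I would then note that equality as words is a special case of order-isomorphism, and that each $n$-window of such a cycle is literally a permutation of $[n]$, hence order-isomorphic to a unique element of $S_n$ and appearing exactly once. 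The positions of these windows (at offsets that are multiples of $n-s$ in a cyclic word of length $n!(n-s)$) are precisely the index sets required by Definition \ref{def:socycle}. Therefore the Horan--Hurlbert object satisfies Definition \ref{def:socycle} verbatim, and is an $s$-ocycle in our (weaker) sense.

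Next I would handle the only value of $s$ not covered above, namely $s = n-1$. Here $\gcd(n-1, n) = 1$ holds automatically, since consecutive integers are coprime, and $n/2 \le n-1$ holds for all $n \ge 2$; thus $s = n-1$ genuinely falls under case (2) of Theorem \ref{thm:HHmod}. By Definition \ref{def:socycle}, an $(n-1)$-ocycle for permutations on $[n]$ is exactly a universal cycle for $n$-permutations. Since universal cycles for $n$-permutations are known to exist for every $n$ (see \cite{CDG92, J09}), the case $s = n-1$ follows, completing the case split.

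I do not expect a serious obstacle: the mathematical content lives entirely in the cited results, and the work is the bookkeeping of verifying that the case split is exhaustive and that word-equality overlaps specialize to order-isomorphic overlaps. The one point requiring a moment of care is confirming that the indexing condition in Definition \ref{def:socycle} matches the cyclic structure of the Horan--Hurlbert cycles, but this is immediate once one matches the shared length $n!(n-s)$ against the definition of overlap by $s$.
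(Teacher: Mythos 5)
Your proposal is correct and follows essentially the same route as the paper: the authors likewise obtain the cases $s \le n-2$ directly from Theorem \ref{thm:HH} (noting that word-equality overlaps are a special case of order-isomorphic overlaps), and handle $s = n-1$ by observing that an $(n-1)$-ocycle under Definition \ref{def:socycle} is exactly a universal cycle for $n$-permutations, whose existence is known. Your added bookkeeping (checking $\gcd(n-1,n)=1$ and matching the window index sets to the cyclic length $n!(n-s)$) is consistent with, and slightly more explicit than, the paper's brief justification.
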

	
	Definition \ref{def:socycle} is further justified by the following theorem showing the connection between graph $s$-overlap cycles and the clustered graph of $s$-overlapping permutations. The proof is a generalization of the proof of Theorem \ref{thm:perm graph bij} on gucycles for permutation graphs. It leverages Lemma \ref{lem:isoperm} to construct graph $s$-overlap cycles for permutation graphs given Euler tours of the clustered graph of $s$-overlapping permutations.

	\begin{theorem}\label{thm:sgobijn}For every $n \ge 3$ there is a bijection between Euler tours of the clustered graph of $s$-overlapping permutations and graph $s$-overlap cycles for permutation graphs that preserves the order in which $S_n$ is covered.\end{theorem}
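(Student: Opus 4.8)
The plan is to mirror the proof of Theorem~\ref{thm:perm graph bij}, replacing overlap by $n-1$ with overlap by $s$ throughout and replacing the Cyclic Gluing Lemma (Lemma~\ref{lem:cyclicgluing}) with its $s$-overlap generalization (Lemma~\ref{lem:scyclicgluing}). The first step is immediate: Lemma~\ref{lem:isoperm} is already stated for every $s$, so the isomorphism $G$ induces an order-preserving bijection between the Euler tours of $\mathcal{O}(n,s)$ and the Euler tours of $P_{n,s}$. It therefore suffices to exhibit an order-preserving bijection between the Euler tours of $P_{n,s}$ and the $s$-gocycles for permutation graphs on $[n]$.

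For the forward direction, I would take an Euler tour $(p_1, p_2, \ldots, p_{n!})$ of $P_{n,s}$, so each $p_i$ is a permutation graph on $[n]$ and $(p_i, p_{i+1})$ overlaps by $s$ for all $i$ (indices mod $n!$). Setting $q_1 = p_1$ and letting $q_k$ be the gluing of $(q_{k-1}, p_k)$ by $s$ (valid by Lemma~\ref{lem:gluing}), the result $H := q_{n!}$ is an ordered graph on vertex set $[m]$ with $m = n + (n!-1)(n-s)$, in which $p_{i+1}$ occupies the window $H[1+i(n-s), \ldots, n+i(n-s)]$ for $0 \le i \le n!-1$. Because $(p_{n!}, p_1)$ overlaps by $s$, the pair $(H,H)$ overlaps by $s$. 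I would then apply Lemma~\ref{lem:scyclicgluing} with $k = n!-1$ to close $H$ into a cyclically ordered graph $J$ on vertex set $[m-s] = [n!(n-s)]$ whose windows at positions $1 + i(n-s)$ agree with those of $H$; by construction these are exactly the permutation graphs $p_1, \ldots, p_{n!}$, in order, so $J$ is an $s$-gocycle covering $S_n$ in the order of the Euler tour. The inverse map reads off the $n$-windows of an $s$-gocycle $J$ at the designated positions $1 + i(n-s)$, for $0 \le i \le n!-1$, recovering the Euler tour of $P_{n,s}$.

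The crux of the argument—and where the $s$-overlap case differs meaningfully from Theorem~\ref{thm:perm graph bij}—is verifying the hypotheses of Lemma~\ref{lem:scyclicgluing}. Three things must be checked. First, $m - s > 2(n-1)$: since $m - s = (n-s)\bigl(1 + (n!-1)\bigr) = n!(n-s) \ge n! \ge 6 > 2(n-1)$ for all $n \ge 3$ (using $n - s \ge 1$), this holds. Second, $H$ has no edges with one endpoint in $\set{1,\ldots,s}$ and the other in $\set{m-s+1,\ldots,m}$: this follows because gluing never creates edges at distance exceeding $n-1$, while these vertex sets are separated by distance $m - 2s + 1 = n!(n-s) - s + 1 > n-1$. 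Third, the structural condition that every edge of $H$ lies inside one of the windows $H[1+i(n-s),\ldots,n+i(n-s)]$: this holds because each glued piece $p_{i+1}$ is a permutation graph supported on exactly such a window and the gluing operation introduces no further edges. I expect this verification to be the main obstacle, since it is the only place the particular arithmetic of the $s$-overlap positions enters; once the hypotheses are confirmed, the remaining bookkeeping—that the two maps are mutually inverse and order-preserving—is routine and parallels the $s = n-1$ case.
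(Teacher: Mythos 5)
Your proposal is correct and follows essentially the same route as the paper's proof: reduce to Euler tours of $P_{n,s}$ via Lemma~\ref{lem:isoperm}, build the $s$-goword by iterated gluing, and close it with Lemma~\ref{lem:scyclicgluing}, with the inverse reading off the designated $n$-windows. Your verification of the hypotheses of Lemma~\ref{lem:scyclicgluing} (the length bound $m-s = n!(n-s) > 2(n-1)$, the separation of the end blocks, and the window-support condition) is in fact spelled out more explicitly than in the paper, but the argument is the same.
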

	
	\begin{proof}By Lemma \ref{lem:isoperm} we need only show a bijection between Euler tours of $P_{n,s}$ and $s$-gocycles for permutation graphs on $[n]$ that preserves the order in which $S_n$ is covered.
		
		An Euler tour lists the permutation graphs on $[n]$ in an order $p_1,p_2,\ldots, p_{n!}$ such that consecutive permutation graphs overlap by $s$. Let $q_1 = p_1$, and for all $2 \le k \le n!$ let $q_k$ be the gluing of $(q_{k-1},p_k)$, which is possible by Lemma \ref{lem:gluing}. Then $q_{n!}$ is an $s$-goword $H$, which has length $n!(n-s)+s$, and since $(p_{n!},p_1)$ overlaps by $s$, we have that $(H,H)$ overlaps by $s$. Since the gluing operation does not create edges at distance greater than $s$, and $n \ge 3$, $H$ has no edges with one vertex in $\set{1,2,\ldots, s}$ and the other in $\set{n!(n-s)+1, n!(n-s)+2, \ldots, n!(n-s)+s}$. For every $n \ge 3$, with $m:=n!(n-s)+s = (n!+1)(n-s)+n$, we have $m-s = n!(n-s) \ge n! > 2(n-1)$, so by Lemma \ref{lem:scyclicgluing} there exists a cyclic gluing $J$ of $H$ which has the same $n$-windows starting at positions that are $1 \mod{n-s}$ in the same order. Therefore $J$ is an $s$-gocycle covering $S_n$ in the same order as the Euler tour.
		
		Given an $s$-gocycle $H$, the corresponding Euler tour of $P_{n,s}$  is $(p_1,p_2, \ldots, p_{n!})$, where for all $i \in [n!]$, $p_i := H[1+(i-1)(n-s), 2+(i-1)(n-s), \ldots, n+(i-1)(n-s)]$. That is, it is obtained by taking the relevant $n$-windows of $H$ in order.
	\end{proof}
	
	Analogously to gucycles for permutation graphs and universal cycles for permutations in Section \ref{subsec:permbijn}, we do not know whether there could be some Euler tours of the clustered graph of $s$-overlapping permutations that do not correspond to $s$-ocycles for permutations (although every Euler tour corresponds to an $s$-oword for permutations); see Remark \ref{rem:tourtoword}. However, Theorems \ref{thm:HHmod} and \ref{thm:sgobijn} together imply the existence of $s$-gocycles for permutation graphs on vertex set $[n]$ under suitable conditions on $s$ and $n$.
	
	\begin{corollary}
		Let $n, s \in \N$ with $n \ge 2$. If either (1) $1 \le s < n/2$, or (2) $\gcd(s, n) = 1$ with
		$n/2 \le s \le n - 1$, then there exists an $s$-gocycle for the permutation graphs on vertex set $[n]$.
	\end{corollary}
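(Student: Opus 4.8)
The plan is to derive the corollary by composing the existence theorem for $s$-ocycles with the order-preserving bijection of Theorem~\ref{thm:sgobijn}. First I would apply Theorem~\ref{thm:HHmod}: under hypothesis (1) or (2) there exists an $s$-ocycle for the permutations of $[n]$. The content of the argument is then to recognize this $s$-ocycle as an Euler tour of the clustered graph of $s$-overlapping permutations $\mathcal{O}(n,s)$. Indeed, an $s$-ocycle is a cyclic listing $p_1, p_2, \ldots, p_{n!}$ of all of $S_n$ in which each consecutive pair $(p_i, p_{i+1})$, with indices taken cyclically, overlaps by $s$. Since the edge set of $\mathcal{O}(n,s)$ is identified with $S_n$, and $(\sigma,\tau)$ is a directed path in $\mathcal{O}(n,s)$ exactly when it overlaps by $s$, reading off the permutations in the order they are covered produces a closed walk that uses each edge of $\mathcal{O}(n,s)$ exactly once, i.e.\ an Euler tour.

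Next I would feed this Euler tour into Theorem~\ref{thm:sgobijn}, which for $n \ge 3$ provides a bijection between Euler tours of $\mathcal{O}(n,s)$ and $s$-gocycles for permutation graphs on $[n]$ that preserves the order in which $S_n$ is covered. The image of our Euler tour under this bijection is exactly the desired $s$-gocycle for the permutation graphs on vertex set $[n]$, which settles every case with $n \ge 3$ meeting condition (1) or (2). The remaining boundary value $n = 2$ forces $s = 1$ under the hypotheses, and I would treat this degenerate case separately by direct inspection.

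I do not anticipate a serious obstacle, since the two cited theorems carry essentially all of the weight; the only steps needing care are the bookkeeping that identifies an $s$-ocycle with an Euler tour of $\mathcal{O}(n,s)$ (checking that ``covers each permutation exactly once with consecutive overlaps by $s$'' is the same condition as ``is a closed Euler tour of $\mathcal{O}(n,s)$'') and confirming the small case $n = 2$, where one should verify whether the convention for cyclically ordered graphs on two vertices actually permits the required $s$-gocycle.
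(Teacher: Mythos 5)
Your proposal matches the paper's proof essentially verbatim: invoke Theorem \ref{thm:HHmod} to obtain an $s$-ocycle, read off its $n$-windows as an Euler tour of the clustered graph $\mathcal{O}(n,s)$ of $s$-overlapping permutations, and apply Theorem \ref{thm:sgobijn} to convert that Euler tour into an $s$-gocycle for permutation graphs on $[n]$. Your added caution about the boundary case $n=2$ (where Theorem \ref{thm:sgobijn} is only stated for $n \ge 3$) is, if anything, more careful than the paper, which does not address that case.
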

	
	\begin{proof}
		By Theorem \ref{thm:HHmod}, these conditions guarantee the existence of an $s$-overlap cycle for permutations. Given this $s$-ocycle, its $n$-windows give an ordering of $S_n$ such that every consecutive pair overlaps by $s$, which exactly corresponds to an Euler tour of the clustered graph of $s$-overlapping permutations. By Theorem \ref{thm:sgobijn}, the Euler tour of the clustered graph of $s$-overlapping permutations corresponds to an $s$-gocycle for the permutation graphs on vertex set $[n]$.
	\end{proof}
	
	An example of an $s$-gocycle is shown in Figure \ref{fig:sgocycle}.

	\begin{figure}[h!]
		\centering
		
		\tikzstyle{every node}=[circle, draw, fill=black, inner sep=0pt, minimum width=4pt]
		\begin{tikzpicture}
			
			\node[label={[shift={(0,-0.4)}]\scriptsize{1}}] (1) {};
			\node[label={[shift={(0,-0.4)}]\scriptsize{2}}] (2) [right of=1] {};
			\node[label={[shift={(0,-0.4)}]\scriptsize{3}}] (3) [right of=2] {};
			\node[label={[shift={(0,-0.4)}]\scriptsize{4}}] (4) [right of=3] {};
			\node[label={[shift={(0,-0.4)}]\scriptsize{5}}] (5) [right of=4] {};
			\node[label={[shift={(0,-0.4)}]\scriptsize{6}}] (6) [right of=5] {};
			\node[label={[shift={(0,-0.4)}]\scriptsize{7}}] (7) [right of=6] {};
			\node[label={[shift={(0,-0.4)}]\scriptsize{8}}] (8) [right of=7] {};
			
			\tikzstyle{every node}=[]
			\node[] [right of=8] (d) {...};
			
			\tikzstyle{every node}=[circle, draw, fill=black, inner sep=0pt, minimum width=4pt]
			\node[label={[shift={(0,-0.4)}]\scriptsize{45}}] (9) [right of=d] {};
			\node[label={[shift={(0,-0.4)}]\scriptsize{46}}] (10) [right of=9] {};
			\node[label={[shift={(0,-0.4)}]\scriptsize{47}}] (11) [right of=10] {};
			\node[label={[shift={(0,-0.4)}]\scriptsize{48}}] (12) [right of=11] {};
			\draw
			(4) to (5)
			(6) to (7) to (8)
			(9) to (10) to (11) to (12)
			
			(8) to[bend left=-55] (6)
			(11) to[bend left=-55] (9)
			(12) to[bend left=-55] (10)
			(12) to[bend left=-70] (9)
			
			(11) to[bend left=20] (1)
			(11) to[bend left=15] (2)
			(12) to[bend left=25] (1)
			(12) to[bend left=20] (2);

		\end{tikzpicture}   
		
		\caption{The beginning and end of the $2$-gocycle for permutation graphs on $[4]$ corresponding to the $2$-overlap cycle $012436543645362401324675689568732410425380756432$ from Example \ref{example:socycle}.}
		\label{fig:sgocycle}
	\end{figure}
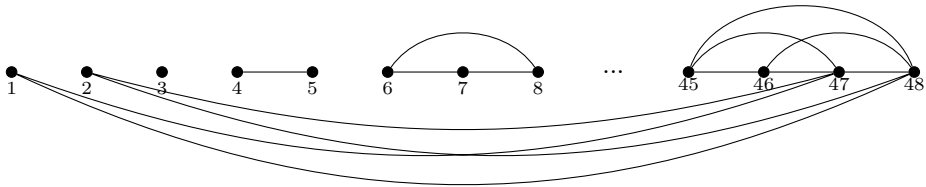
	
	\subsection{Other types of gupwords and shortened universal words for permutations}\label{subsec:permfuture}
	
	The gupcycles for permutation graphs in Section \ref{subsec:permgupcycle} all are derived from the idea of using incomparable elements at distance $n-1$ to shorten universal words for permutations, from \cite{KPV19}. In this section we show examples of gupwords for permutation graphs outside of this framework, which suggest new directions for future research. The first two examples also demonstrate new ways to shorten universal words for permutations beyond those considered in \cite{KPV19}, which suggest new research questions on universal words.

	\begin{example}\label{ex:gupword1} Figure \ref{fig:gupword2} shows a new kind of gupword for permutation graphs on $[3]$. Here only one pair of twin edges, instead of a whole cycle of pairs of twin edges, has been compressed in the arc digraph for permutation graphs. As a result there is only one diamond edge in the gupword, and the gupword cannot be cyclically glued to form a gupcycle. This gupword naturally corresponds (in the sense of covering the permutations and their permutation graphs in the same partial order) to a new form of shortened universal word for $3$-permutations, $2134312$.
		\begin{figure}[h!]
			\centering
			\tikzstyle{every node}=[circle, draw, fill=black, inner sep=0pt, minimum width=4pt]
			
			\begin{tikzpicture}[scale=0.4]
				
				\node[label={[shift={(0,-0.4)}]\scriptsize{1}}] (1) {};
				\node[label={[shift={(0,-0.4)}]\scriptsize{2}}] (2) [right of=1] {};
				\node[label={[shift={(0,-0.4)}]\scriptsize{3}}] (3) [right of=2] {};
				\node[label={[shift={(0,-0.4)}]\scriptsize{4}}] (4) [right of=3] {};
				\node[label={[shift={(0,-0.4)}]\scriptsize{5}}] (5) [right of=4] {};
				\node[label={[shift={(0,-0.4)}]\scriptsize{6}}] (6) [right of=5] {};
				\node[label={[shift={(0,-0.4)}]\scriptsize{7}}] (7) [right of=6] {};
				
				\path[every node/.style]
				(1) edge node {} (2)    
				(4) edge node {} (5)
				(5) edge node {} (6);
				
				\draw[bend left=55]
				(5) to (7)
				(4) to (6);
				
				\draw [style=dashed, bend left=55] 
				(3) to (5);
				
			\end{tikzpicture}
			\caption{A gupword for permutation graphs on $[3]$ corresponding to the shortened universal word for permutations $2134312$.}
			\label{fig:gupword2}
		\end{figure}
	\end{example}

	\begin{example}Figure \ref{fig:gupword1} shows a gupword for permutation graphs on $[3]$, which is different from the gupcycles considered in Section \ref{subsec:permgupcycle} and from Example \ref{ex:gupword1} in that it has a diamond edge at distance $1 \ne n-1$. This gupword also naturally corresponds to an analogously new kind of shortened universal word for $3$-permutations, $132231$, which uses incomparable elements at distance $1 \ne n-1$. It is an open question whether there exists a gupcycle with a diamond edge at distance less than $n-1$.
		\begin{figure}[h!]
			\centering
			\tikzstyle{every node}=[circle, draw, fill=black, inner sep=0pt, minimum width=4pt]
			
			\begin{tikzpicture}[scale=0.4]
				
				\node[label={[shift={(0,-0.4)}]\scriptsize{1}}] (1) {};
				\node[label={[shift={(0,-0.4)}]\scriptsize{2}}] (2) [right of=1] {};
				\node[label={[shift={(0,-0.4)}]\scriptsize{3}}] (3) [right of=2] {};
				\node[label={[shift={(0,-0.4)}]\scriptsize{4}}] (4) [right of=3] {};
				\node[label={[shift={(0,-0.4)}]\scriptsize{5}}] (5) [right of=4] {};
				\node[label={[shift={(0,-0.4)}]\scriptsize{6}}] (6) [right of=5] {};
				
				\path[every node/.style]
				(2) edge node {} (3)    
				(5) edge node {} (6);
				
				\draw[bend left=55]
				(2) to (4)
				(4) to (6);
				
				\draw [style=dashed] 
				(3) to (4);
				
			\end{tikzpicture}
			\caption{A gupword for permutation graphs on $[3]$ corresponding to the shortened universal word for permutations $132231$.}
			\label{fig:gupword1}
		\end{figure}
	\end{example}

	In \cite{KPV19}, two different methods were shown for shortening universal words for permutations. The first, using incomparable elements at distance $n-1$, is the method we used to show the existence of gupcycles for permutation graphs in Section \ref{subsec:permgupcycle}. The second method uses $\diam$ as a ``do not know'' symbol or $\diam_{\set{a,b}}$ to represent having two possible values, $a$ and $b$. The following example shows promise for translating this idea to the graph universal word setting. Gupwords and gupcycles for permutation graphs using a $\diamond$ symbol representing only two of the symbols could be explored in the future.
	
	\begin{example} Figure \ref{fig:restrictedgupword} shows two examples of universal words for $3$-permutations shortened using $\diam_{\set{a,b}}$, along with their corresponding gupwords.
		
		\begin{figure}[h!]
			\centering
			
			\begin{multicols}{2}
				\tikzstyle{every node}=[circle, draw, fill=black, inner sep=0pt, minimum width=4pt]
				
				\begin{tikzpicture}[scale=0.4]
					
					\node[label={[shift={(0,-0.4)}]\scriptsize{1}}] (1) {};
					\node[label={[shift={(0,-0.4)}]\scriptsize{2}}] (2) [right of=1] {};
					\node[label={[shift={(0,-0.4)}]\scriptsize{3}}] (3) [right of=2] {};
					\node[label={[shift={(0,-0.4)}]\scriptsize{4}}] (4) [right of=3] {};
					\node[label={[shift={(0,-0.4)}]\scriptsize{5}}] (5) [right of=4] {};
					\node[label={[shift={(0,-0.4)}]\scriptsize{6}}] (6) [right of=5] {};
					\node[label={[shift={(0,-0.4)}]\scriptsize{7}}] (7) [right of=6] {};
					
					\path[every node/.style]
					(3) edge node {} (4)    
					(4) edge node {} (5)    
					(6) edge node {} (7);
					
					\draw[bend left=55]
					(3) to (5)
					(5) to (7);
					
					\draw [style=dashed, bend left=55] 
					(1) to (3);
					
					\draw [style=dashed] 
					(1) to (2);
					
				\end{tikzpicture}
				
				\columnbreak
				
				\tikzstyle{every node}=[circle, draw, fill=black, inner sep=0pt, minimum width=4pt]
				\begin{tikzpicture}[scale=0.4]
					
					\node[label={[shift={(0,-0.4)}]\scriptsize{1}}] (1) {};
					\node[label={[shift={(0,-0.4)}]\scriptsize{2}}] (2) [right of=1] {};
					\node[label={[shift={(0,-0.4)}]\scriptsize{3}}] (3) [right of=2] {};
					\node[label={[shift={(0,-0.4)}]\scriptsize{4}}] (4) [right of=3] {};
					\node[label={[shift={(0,-0.4)}]\scriptsize{5}}] (5) [right of=4] {};
					\node[label={[shift={(0,-0.4)}]\scriptsize{6}}] (6) [right of=5] {};
					\node[label={[shift={(0,-0.4)}]\scriptsize{7}}] (7) [right of=6] {};
					
					\path[every node/.style]
					(3) edge node {} (4)    
					(4) edge node {} (5)    
					(6) edge node {} (7);
					
					\draw[bend left=55]
					(3) to (5)
					(4) to (6)
					(5) to (7);
					
					\draw [style=dashed] 
					(1) to (2);
					
				\end{tikzpicture}
			\end{multicols}
			
			\caption{Gupwords with restricted $\diam$'s: $\mathord{\diamond}_{\set{1,5}}243241$  (dashed edges must be both included or both excluded) on the left and $\mathord{\diamond}_{\set{1,3}}254231$ on the right.}
			\label{fig:restrictedgupword}
		\end{figure}
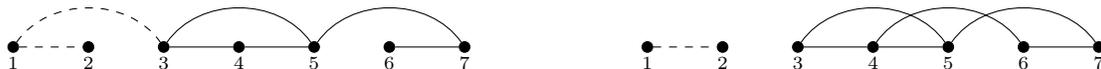
		
	\end{example}
	
	\section{Open Problems}\label{sec:open}
	
	Many interesting questions remain open. Most broadly, other families of combinatorial objects could be explored within the framework of graph universal (partial) cycles. Which other classes of $n$-vertex graphs, or of other combinatorial objects, can be represented by a gucycle or gupcycle?
	
	On the topic of gupcycles for labeled graphs on $[n]$, which gupcycle lengths are possible? Theorem \ref{thm:guplengths} ensures two intervals of lengths can always be achieved. The remaining question, Question \ref{qu:labeled}, is whether the intermediate lengths can be achieved for all $n \ge 5$. Further refining this question, it would be interesting to know how many gupcycles for labeled graphs on $[n]$ exist for each possible gupcycle length. For each length $\ell$ there may be multiple partially compressed digraphs with $\ell$ edges, and, within each, one could apply the BEST Theorem (see \cite{AB51,TS41}) to find the number of Euler tours or gupcycles.
	
	Conjecture \ref{conj:unlabeled4} on the existence of gucycles for $n$-vertex unlabeled graphs for all $n \ge 4$ remains open. Since every such gucycle can be split to construct a guword for $n$-vertex unlabeled graphs, a possibly easier question is whether guwords for $n$-vertex unlabeled graphs exist for all $n \ge 3$. Our results together with those in \cite{BKS10} imply that such guwords exist for $n \in \set{3,4,5}$. Gupcycles and gupwords for $n$-vertex unlabeled graphs could also be explored further; we have shown only that one exists.
	
	An open question on gupcycles for $n$-vertex threshold graphs is whether they exist for any $n \notin \set{5,9}$. One way to address the question is via Proposition \ref{prop:thresholdgupcycle}, by showing the existence of universal partial cycles for $\set{0,1}^{n-1}$ for other values of $n$ (the only possible values of $n$ are of the form $2^k+1$ \cite{FGKMM21}). Alternatively, do any gucycles and gupcycles for $n$-vertex threshold graphs exist that represent some of the threshold graphs in an order that does not match the usual binary representation?
	
	Turning our attention to permutations, do gucycles exist for subsets of the $n$-vertex permutation graphs that correspond to other families of combinatorial objects? For example, universal cycles for pattern-avoiding permutations were studied in \cite{AW09}; are there gucycles for pattern-avoiding permutation graphs? Extending our work, can $s$-gocycles be shortened using diamond edges? In addition, our new definition of $s$-overlap cycles for permutations may be of interest outside of the study of graph universal cycles. Section \ref{subsec:permfuture} shows a few different kinds of gupwords for permutation graphs along with new kinds of shortened universal words for permutations that could be explored in the future.

	Finally, graph universal cycles for $n$-vertex labeled trees were shown to exist for all $n\ge 3$ in \cite{BKS10}, but no meaningful correspondence between these gucycles and universal cycles is known. In particular, interpreting the $(n-2)$-windows of a De Bruijn cycle for $[n]^{n-2}$ as Pr\"{u}fer codes yields a universal cycle representation of the $n$-vertex labeled trees, which does not naturally give rise to a graph universal cycle for $n$-vertex labeled trees because overlaps in the Pr\"{u}fer codes do not correspond to overlaps in the labeled trees as ordered graphs. Other codes for $n$-vertex labeled trees have been studied, e.g. in \cite{DM01}. Is there a code for $n$-vertex labeled trees such that the De Bruijn cycles for $[n]^{n-2}$ can be used to produce gucycles and vice versa?
	
	\section{Acknowledgments}
	
	This research was supported by NSF DMS-1839918. We thank Dylan Fillmore, Bennet Goeckner, Xinsen Hong, Bernard Lidick\'{y}, Kirin Martin, Daniel McGinnis, and Jan van den Heuvel for helpful discussions.
	
	\bibliographystyle{plainurl}
	\bibliography{GucyclesBibliography}
\end{document}